\documentclass[11pt,letterpaper]{amsart}

\usepackage{amssymb}
\usepackage{amsthm}
\usepackage{epsfig} 
\usepackage{epic,eepic}
\usepackage{xcolor}
\usepackage{comment}
\usepackage{enumitem}
\usepackage{geometry}

\newtheorem{thm}{Theorem}[section]
\newtheorem{prop}[thm]{Proposition}

\newtheorem{cor}[thm]{Corollary}

\theoremstyle{definition}
\newtheorem{definition}[thm]{Definition}

\theoremstyle{remark}
\newtheorem{remark}[thm]{Remark}

\usepackage{hyperref}

\usepackage{esint}

\newcommand{\D}{\mathcal{D}}

\renewcommand{\S}{\mathcal{S}}
\newcommand{\R}{\mathbb{R}}
\newcommand{\p}{\partial}

\renewcommand{\O}{\Omega}
\newcommand{\spt}{\operatorname{spt}}

\newcommand{\eps}{\epsilon}

\newcommand{\sgn}{\operatorname{sgn}}

\renewcommand{\tt}[1]{\text{#1}}

\numberwithin{equation}{section}

\begin{document}

\title{$L^p$ stability of Vortex Patches in Two Dimensional Domains}

 \author{Zelin Dong}
 \address{Department of Mathematics, The Chinese University of Hong Kong, 
 Shatin, NT, Hong Kong SAR}
 \email{1155173731@link.cuhk.edu.hk}

\flushbottom

\begin{abstract}
In this paper, we investigate the orbital stability of vortex patches in the two-dimensional incompressible Euler equations, extending the penalized energy variational framework pioneered by Abe and Choi \cite{abe2022stability} for Lamb dipoles. The recent work by Abe, Choi and Jeong \cite{Abe2025StabilityOL} (which removes $L^1$ constraint) and Dong and Luo \cite{Dong2026StabilityOV} (which treats domains lacking scaling or translation invariance) left open the challenge of a unified $L^p$ stability theory without any a priori $L^1$ or $L^p$ bounds on two-dimensional domains.

We establish a unified $L^p$ stability theory on three typical two-dimensional domains. These domains are: the half-plane, strips of any width, and domains satisfying a weak finite volume condition. For each domain, we prove that the penalized energy functional admits a minimizer for suitable $p$, and that every such minimizer satisfies the elliptic equation $\omega^{p-1} = \lambda(\psi - W x_2)_+$. Furthermore, we demonstrate that the set of minimizers is orbitally stable under the Eulerian dynamics. The absence of spatial scaling and horizontal translation invariance necessitates novel strategies: on the strip, we refine a concentration-compactness argument to prove strict subadditivity; on weak finite volume domains, we bypass the need for subadditivity by exploiting the inherent decay rate $q$ of the domain to enforce compactness. 
This work synthesizes the approaches of \cite{abe2022stability}, \cite{Abe2025StabilityOL}, \cite{abe2025existence}, and \cite{Dong2026StabilityOV} into a comprehensive framework, significantly expanding the scope of provably stable vortex structures.
\end{abstract}

\maketitle

\tableofcontents

\thispagestyle{empty}

\section{Introduction}

\subsection{Background and Motivation}

The two-dimensional Euler equations for the motion of an inviscid, incompressible fluid on a fixed domain $\Omega\subseteq \R^2_+$ is given in vorticity form by
\begin{align}
    \p_t \omega +u\cdot\nabla \omega&= 0, ~ \quad \omega(x,0)=\omega_0(x) \quad \tt{ in } \Omega.  \label{eqn:vorticity equation}
\end{align}
Here, $\omega$ is the vorticity of the flow, and the fluid velocity $u$ satisfies the slip boundary condition: 
\begin{align*}
    u\cdot \mathcal{N}=0,
\end{align*}
where \(\mathcal{N}\) is the unit outward normal. When the appropriate Biot-Savart law is applicable, $u$ can be recovered by from $\omega$ in the sense that $$u(x,t)=\nabla^\perp \int_D G(x,y)\omega(y,t)dy,$$ and $G$ is the Green's function for the Dirichlet problem in $\Omega$. It will be convenient for us to define the stream function associated with $\omega$ :
\begin{align}
    \psi(x,t)=\int_{\Omega} G(x,y) \omega(y,t)dy, \label{def:stream function}
\end{align}
so that $u(x,t)=\nabla^\perp \psi$. In addition, in this paper, when conducting a discussion at a fixed time, the time variable $t$ will be omitted, i.e., $\omega (\cdot, t) = \omega (\cdot), \psi (\cdot, t) = \psi (\cdot) $ and $u(\cdot, t) = u(\cdot) $.

A natural question is whether solutions remain stable for long times. In general, the answer is negative. For 2D Euler equations in a disk, Nadirashvili, Alexander, and Šverák \cite{kiselev2014small} constructed solutions whose vorticity gradient grows double‑exponentially. Similar mechanisms were exploited on the torus $\mathbb{T}^2$ by Zlato{\v{s}} \cite{zlatovs2015exponential}, applied to smooth domains with an axis of symmetry by Xu \cite{xu2016fast}, and also for free-boundary problems by Hu, Luo, and Yao \cite{hu2024small}, demonstrating that instability can occur generically.

Nevertheless, stability can be established for certain domains with suitable classes of initial data. Cao and Wang \cite{cao2021nonlinear} proved nonlinear stability of highly concentrated vortex patches near non‑degenerate minima of the Robin function; Cao, Wan, and Wang \cite{cao2019nonlinear} showed orbital stability for kinetic energy maximizing patches; Choi, Jeong, and Lim \cite{choi2022stability} studied the stability of monotone, nonnegative, and compactly supported vorticities in a half cylinder. Earlier work by Burton \cite{burton2005global} and Turkington \cite{turkington1987evolution} established foundational stability results for vortex patches using rearrangement techniques. 


In recent years, a variational approach based on minimizing a penalized energy has been successfully applied to prove the orbital stability of vortex patches. Abe and Choi \cite{abe2022stability} established the stability of Lamb dipoles in the half-plane $\R_+^2$ by considering the minimization problem
\begin{align*}
    \widetilde{I}_{\mu,\lambda}(2)=\inf_{\omega \in \widetilde{K}_{\mu,\nu}}\{-E_{2,\lambda}[\omega]\}, 
\end{align*}
where $\widetilde{K}_{\mu,\nu}$ is a class of admissible vorticities with fixed first moment and bounded $L^1$-norm, and $E_{2,\lambda}$ is the penalized energy. In recent years, their variational approach has been extended to many further problems. Choi, Jeong, and Yao \cite{choi2024stability} used the monotonicity of the first moment and pointwise kernel estimates to study the orbital stability of a pair of opposite-signed Lamb dipoles under odd-odd symmetry, as well as concentrated vortices in a quadrant. Abe, Choi, and Jeong \cite{Abe2025StabilityOL} overcame the $L^1$-norm restriction required for stability in Abe and Choi \cite{abe2022stability}. Abe, Choi, Jeong, Sim, and Woo \cite{abe2025existence} focused on problems with $L^p$-norm structure and proved the existence and stability of Sadovskii vortices. Dong and Luo \cite{Dong2026StabilityOV} addressed the limitation of previous studies confined to $\R^2_+$, extending $L^2$ stability results to strips and to a class of domains satisfying the weak finite volume condition.

However, all of these results have their own shortcomings. Abe, Choi, and Jeong \cite{Abe2025StabilityOL} is restricted to the $L^2$ setting; Abe, Choi, Jeong, Sim, and Woo \cite{abe2025existence} studied the stability of maximizers of the kinetic energy, and thus the stability property they obtained differs somewhat from other results; 
Recently, in a joint work with Luo \cite{Dong2026StabilityOV}, we overcame
the domain limitation, but did not achieve $L^p$ stability and still required a prior $L^1$ upper bound.

\subsection{Main results} \label{sec:main results}
The main goal of this paper is to synthesize all the above characteristics and obtain $L^p$ stability results for proper $p$ that are free of both the $L^1$ and $L^p$ constraints in $\O$, where $\O$ represents the three typical domains mentioned above, which are given by
\begin{definition} 
    Let $\O\subseteq\R^2_+$ be a fixed domain. Three particular cases of $\O$ and their Green’s functions are given below:
    \begin{itemize}
        \item The half plane: $\R^2_+=\R^2\cap\{ x_2 >0 \}$, whose Green’s function is 
        \begin{align}
            G_H(x,y)=\frac{1}{4\pi} \ln \left( 1 + \frac{4x_2y_2}{|x-y|^2} \right) \label{def:green's function for half plane}
        \end{align}
        
        \item The strip: $\S=\R \times (0,L), ~ L>0$. whose Green’s function is
        \begin{align}
            G_S(x,y)=&\frac{1}{4\pi}\ln\Big(1+\frac{\sin(\frac{\pi}{L}x_2)\sin(\frac{\pi}{L}y_2)}{\sinh(\frac{\pi}{2L}(x_1-y_1))^2+\sin(\frac{\pi}{2L}(x_2-y_2))^2}\Big). \label{def:green's function for strip}
        \end{align}

        \item The domain $\D = D_u\cup D_l$, 
        where
        \begin{align}
            D_u := \mathcal{D} \cap \{x_2\ge 1\}, && D_l := \D \cap \{x_2<1\}, \label{def:upper and lower part of domain}
        \end{align}
        that satisfies the weak finite volume condition:
        \begin{itemize}
            \item[*] The volume condition,       
                \begin{align}
                    \int_{D_l} x_2^q dx<\infty, \tt{ for some } q\geq 0,\tt{ and } ~ \tt{Vol}(D_u)<\infty. \label{def:weak_finite_volume_domain_assumption}
                \end{align}
            \item[*] The Biot-Savart law is applicable on $\D$.
        \end{itemize}
        whose Green’s function is denoted by $G_\D$, with no explicit formula.
    \end{itemize}
\end{definition}
\begin{remark}
    For convenience, without specifying domain $\O$, we denote its Green’s function by $G$. Since $\O \subseteq \R^2_+$,
    \begin{align}
        0\leq G(x,y)\leq G_H(x,y) \tt{ for any } x,y\in \overline{\Omega}.\label{ineq:comparison between green's function}
    \end{align}
\end{remark}
\begin{remark}
    The class of domains $\D$ includes a wide range of domains, including but not limited to all simply connected domains with (piecewise) smooth boundary discussed in \cite{lacave2019euler}, as well as non-smooth domains verifying the weak tangency condition discussed in \cite{gerard2013two} and \cite{galdi2011introduction}. In addition, this class covers all bounded domains, domains with periodic boundaries, and domains whose boundaries are graphs of some appropriate functions. These are of considerable importance in the study of free-boundary problems. 
\end{remark}

Throughout the paper, we consider the penalized energy minimization problem
\begin{align}
    I_{\mu,\lambda}(p)=\inf_{\omega \in K_{\mu}}\{-E_{p,\lambda}[\omega]\}, \label{def:minimizing problem}
\end{align}
where the penalized energy functional $E_{p,\lambda}$ for any $\lambda>0$ is the difference between kinetic energy $E$ and the $L^p$-norm in the sense that 
\begin{align*}
    E_{p,\lambda}[\omega]=E[\omega]-\frac{1}{p\lambda}\|\omega\|_{p}^p, \tt{ where } E[\omega]=\frac{1}{2}\int_{\Omega}\int_{\Omega}G(x,y)\omega(x)\omega(y)dxdy, 
\end{align*}
and admissible function space $K_{\mu}$ for any $\mu \geq 0$ is given by 
\begin{align*}
    K_{\mu}(p)=\left\{\omega\in L^p(\Omega) \big| \omega\geq0, \int_{\Omega}x_2\omega(x)dx=\mu \right\}, 
\end{align*}

Define $S_{\mu,\lambda}(p)\subseteq K_{\mu}$ to be the set of minimizers of \eqref{def:minimizing problem}. In the sequel, we suppress $p$ and write $K_{\mu}=K_{\mu}(p), \ I_{\mu,\lambda} = I_{\mu,\lambda}(p)$ and $S_{\mu,\lambda} = S_{\mu,\lambda}(p)$ when p is fixed.
Throughout this paper, $C$ denotes general constants that depend on $p$ and the domain $\Omega$. We can now state the existence of minimizers and demonstrate their stability.

\begin{thm}[Existence and compactness of minimizers]\label{thm:general convergence theorem without L1}
    Assume one of the following holds:
    \begin{enumerate}[label=\ref{thm:general convergence theorem without L1}.\arabic*]
        \item \label{case:general convergence on half-plane} $\O=\R^2_+$ with $p>4/3$ and $\mu,\lambda>0$; 
        \item \label{case:general convergence on strips} $\O=\S$ with $p > 2$ and $\mu,\lambda>0$, or $p = 2$ with and $\mu>0$, $\lambda>\frac{2\pi^4}{L^2}$;
        \item \label{case:general convergence on finite volume} $\O=\D$ with $q\in[0,2]$ and $p>4/3$, or $q\in(2,4)$ and $ 4/3< p \leq q/(q-1)$.
    \end{enumerate}
     
    For any minimizing sequence $\{\omega_n\}$ satisfying $\omega_n\in K_{\mu_n}$, $\mu_n \rightarrow \mu $ and $-E_{p,\lambda}[\omega_n] \rightarrow I_{\mu,\lambda}$, then there exists a subsequence, still denoted by $\{\omega_n\}$, such that there exists $\omega\in K_{\mu}$, 
    $\omega_n\rightarrow\omega$ and $x_2\omega_n\rightarrow x_2\omega$ strongly in $L^p(\O)$ and $L^1(\O)$ respectively. In particular, $S_{\mu,\lambda} \neq \emptyset$.
\end{thm}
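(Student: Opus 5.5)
We follow the concentration–compactness scheme of Abe–Choi, adapted to remove the $L^1$ bound as in Abe–Choi–Jeong and to handle non-invariant domains as in Dong–Luo.

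\textbf{Step 1: a priori bounds.} We first show $-\infty<I_{\mu,\lambda}<0$ and that every minimizing sequence is bounded in $L^p$. The key estimate is an energy bound without $L^1$ control. Split $G_H(x,y)$ into near and far parts ($|x-y|<x_2$ or not). On the far part, $G_H\lesssim x_2y_2/|x-y|^2$ combined with the moment $\mu$ gives a contribution controlled by $\mu\,\|\omega\|_{L^1(\text{local})}$. On the near part, interpolate between $\int x_2\omega$ and $\|\omega\|_p$. This should yield
\begin{align*}
E[\omega]\le C\mu^{\alpha}\|\omega\|_p^{\beta}
\end{align*}
with $\beta<p$ exactly when $p>4/3$; this exponent count is where the threshold $4/3$ should come from. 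Then $-E_{p,\lambda}\ge -C\mu^\alpha\|\omega\|_p^\beta+\frac1{p\lambda}\|\omega\|_p^p$ is coercive in $\|\omega\|_p$. Negativity of $I_{\mu,\lambda}$ follows by testing with a small, highly spread patch (for $\R^2_+$) or a suitable localized bump, which makes $E$ dominate the $L^p$ term.

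On the strip there is no scaling. For $p>2$ coercivity holds as above. For $p=2$ the exponents tie, and we instead use the spectral bound $E[\omega]\le \frac{C_L}{2}\|\omega\|_2^2$ from the first Dirichlet eigenvalue in the $x_2$ direction. This is why we need $\lambda$ larger than a threshold: only then is $I<0$ and the problem nondegenerate.

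On $\D$, the moment only controls the part $D_u$. On $D_l$ we use Hölder with the weight $x_2^q$, i.e. $\int_{D_l}\omega\le (\int x_2^q)^{1/p'}\|x_2^{-q/p'}\omega\|_p$-type bounds, which requires $p\le q/(q-1)$ when $q>2$.

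\textbf{Step 2: compactness.} For $\R^2_+$ and $\S$ we apply Lions' concentration–compactness lemma to the densities $\rho_n=x_2\omega_n$, which have mass $\mu_n\to\mu$, and exclude vanishing and dichotomy in turn.
\begin{itemize}
\item \emph{Vanishing} is excluded because it forces $E[\omega_n]\to0$ through the far/near splitting and uniform $L^p$ bounds, contradicting $I<0$.
\item \emph{Dichotomy} is excluded by strict subadditivity, $I_{\mu,\lambda}<I_{\alpha,\lambda}+I_{\mu-\alpha,\lambda}$. On $\R^2_+$ this follows from the scaling $\omega\mapsto \omega(\cdot/\tau)$, which gives $I_{\theta\mu}<\theta I_{\mu}$-type homogeneity inequalities.
\end{itemize}
On the strip, scaling is unavailable, and this is the main obstacle. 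Our plan there:
\begin{enumerate}
\item Take near-minimizers for $\alpha$ and $\mu-\alpha$, translate them horizontally far apart, and add them. Translation in $x_1$ is a symmetry of $\S$, and the cross interaction is positive but exponentially small, so this only gives a non-strict inequality.
\item To get strictness, use the fact that the minimizer satisfies $\omega^{p-1}=\lambda(\psi-Wx_2)_+$. Then move the two pieces to a large but finite separation, where the positive cross energy $\int\int G_S\omega_1\omega_2>0$ strictly lowers $-E$. This requires compactly supported (or suitably decaying) minimizers for the subproblems, which we would establish inductively from the Euler–Lagrange equation.
\end{enumerate}

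Once compactness (tightness up to horizontal translation) holds, we get a weak $L^p$ limit $\omega$. We then show:
\begin{itemize}
\item $x_2\omega_n\to x_2\omega$ in $L^1$, from tightness;
\item $E[\omega_n]\to E[\omega]$, from local compactness of $G$ together with tail control;
\item $\|\omega\|_p\le\liminf\|\omega_n\|_p$.
\end{itemize}
Hence $\omega$ is a minimizer. Equality of norms combined with uniform convexity of $L^p$ upgrades weak convergence to strong $L^p$ convergence.

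\textbf{Step 3: the domain $\D$.} On $\D$ we avoid subadditivity altogether. The finite volume of $D_u$ and the weighted integrability $\int_{D_l}x_2^q<\infty$ give uniform smallness of $\int_{\{|x|>R\}}x_2\omega_n$, obtained via Hölder with the $L^p$ bound and the range $p\le q/(q-1)$. This tightness, together with local compactness of the Green operator, directly yields convergence of energy and moment, and the rest proceeds as in Step 2.
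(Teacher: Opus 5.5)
\textbf{Overall.} Your a priori bounds, exclusion of vanishing, and half-plane subadditivity by scaling are essentially the paper's. Your compactness case also matches, except that on $\R^2_+$ you must additionally rule out $y_{2,n}\to\infty$, which the paper does using \eqref{ineq:sup estimate of stream fct}.

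\textbf{The gap: strict subadditivity on $\S$.} Your plan uses compactly supported (or decaying) minimizers of $I_{\alpha,\lambda}$ and $I_{\mu-\alpha,\lambda}$. But existence of minimizers is exactly what the concentration--compactness argument is meant to produce, and it produces them only after dichotomy is excluded, i.e.\ after strict subadditivity is known. So the argument is circular. The Euler--Lagrange equation $\omega^{p-1}=\lambda(\psi-Wx_2)_+$, and the compact support derived from it, presuppose a minimizer. There is therefore nothing to start an induction from, and no natural induction on the continuous parameter $\mu$. Near-minimizers do not rescue the strict inequality either. Their defect can swamp the cross-energy gain, and overlapping tails raise the penalty, since $\|\omega_1+\omega_2\|_p^p\ge\|\omega_1\|_p^p+\|\omega_2\|_p^p$ for nonnegative functions.

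\textbf{The missing idea.} The paper proves existence on $\S$ independently of Lions' lemma (Proposition \ref{prop:strip exists of minimizer}). It minimizes over the relaxed class $\widehat K_\mu=\{\omega\ge0,\ \int x_2\omega\le\mu\}$ and Steiner-symmetrizes the minimizing sequence in $x_1$ (Proposition \ref{prop:steiner symmetrization}). For functions symmetric and decreasing in $|x_1|$, the decay bound \eqref{ineq:decay in x1 estimate} shows the energy concentrates in a bounded box; this is one place where $p\ge2$ enters. Hence $E$ is weakly continuous along the sequence (Proposition \ref{prop:strip energy convergence}), and a relaxed minimizer exists. If its moment $\mu_0$ were $<\mu$, free perturbations would give $\omega^{p-1}=\lambda(\psi-0\cdot x_2)_+$. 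This contradicts uniqueness of $W$ together with $W>0$ at mass $\mu_0$ (Proposition \ref{prop:positive W}). So $\mu_0=\mu$, and Proposition \ref{prop:compactly support of minimizer} gives compact support. Only then do two disjointly translated minimizers give the strict inequality. This is also why the paper needs a $\lambda$-range independent of $\mu$.

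\textbf{A separate issue on $\D$.} Your Step 3 is the paper's Case (2) of Proposition \ref{prop:weak_finite_volume_energy_convergence}. The H\"older step needs $\int_{D_l}x_2^{p'}<\infty$, which \eqref{def:weak_finite_volume_domain_assumption} gives only if $p'\ge q$. That means $p\le q/(q-1)$ for every $q>1$, not just $q>2$, so the stated corner $q\in(1,2]$, $p>q/(q-1)$ is not reached.

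The paper covers $q\le 2$ by asserting $G\in L^{p'}(\D\times\D)$, but this does not close the corner either:
\begin{itemize}
\item The bound $\int_{\R^2_+}G_H(x,y)^{p'}\,dx\lesssim y_2^2$ it uses holds only for $p<2$. For $p\ge2$ the left side is infinite, since $G_H(\cdot,y)$ decays only like $|x|^{-1}$. For $p<2$ your argument already works.
\item Weak continuity of $E$ alone would not give $\int x_2\omega=\mu$ for the limit.
\end{itemize}

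Also, finite volume of $D_u$ does not give $x_2\in L^{p'}(D_u)$ when $D_u$ is vertically unbounded. So your claimed tightness of $x_2\omega_n$ on $D_u$ is not automatic; the paper does not address this either.
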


\begin{thm}\label{thm:stable wrt minimizer}
    Under the same assumption of the Theorem \ref{thm:general convergence theorem without L1}, the $S_{\mu,\lambda}$ is orbitally stable in the sense that: for any $\epsilon>0$, there exists $\delta>0$ such that $\forall ~ \xi_0\in C_c^{\infty}(\Omega), ~ x_2\xi_0\in L^1(\Omega), ~ \xi_0\geq0$ and 
    \begin{align*}
        \inf_{\omega\in S_{\mu,\lambda}(p)}\Big\{ \|\xi_0-\omega\|_{p}+\|x_2(\xi_0-\omega)\|_{1} \Big\}\leq \delta,
    \end{align*}
    the solution $\xi(t)$ satisfy: $\forall ~ t \in [0,T_{\text{lifespan}})$,
    \begin{align}
        \inf_{\omega\in S_{\mu,\lambda}(p)}\Big\{ \|\xi(t)-\omega\|_{p}+\|x_2(\xi(t)-\omega)\|_{1} \Big\}\leq \epsilon. \label{eqn:stable result}
    \end{align}
\end{thm}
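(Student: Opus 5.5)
We argue by contradiction, using conservation laws to turn the dynamics into a minimizing sequence and then invoking the compactness Theorem \ref{thm:general convergence theorem without L1}. Suppose the conclusion fails. Then there exist $\epsilon_0>0$, initial data $\xi_{0,n}\in C_c^\infty(\O)$ with $\xi_{0,n}\ge0$ and $x_2\xi_{0,n}\in L^1$, and times $t_n\in[0,T_{\text{lifespan}})$ with the following properties: $\inf_{\omega\in S_{\mu,\lambda}}\{\|\xi_{0,n}-\omega\|_p+\|x_2(\xi_{0,n}-\omega)\|_1\}\to0$, while the corresponding solutions satisfy
\begin{align*}
\inf_{\omega\in S_{\mu,\lambda}}\Big\{\|\xi_n(t_n)-\omega\|_p+\|x_2(\xi_n(t_n)-\omega)\|_1\Big\}\ge\epsilon_0 .
\end{align*}
We will show that $\{\xi_n(t_n)\}$ is a minimizing sequence in the sense of Theorem \ref{thm:general convergence theorem without L1}. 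Compactness then yields a subsequence converging in the above norm to some element of $S_{\mu,\lambda}$, which contradicts the lower bound $\epsilon_0$.

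\emph{Step 1: the initial data form a minimizing sequence.} Choose $\omega_n\in S_{\mu,\lambda}$ close to $\xi_{0,n}$. We need continuity of $E$ and of $\|\cdot\|_p^p$ with respect to the distance $\|\cdot\|_p+\|x_2\cdot\|_1$ on nonnegative functions. For $\|\cdot\|_p^p$ this is immediate. For the energy, we use \eqref{ineq:comparison between green's function} together with the standard half-plane kernel estimate. Through an interpolation between the $L^1$ norm of $x_2\omega$ and the $L^p$ norm, this estimate bounds $E[\omega]$ by $C\|x_2\omega\|_1^{a}\|\omega\|_p^{b}$, with exponents available precisely when $p>4/3$. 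The required continuity then follows from the bilinearity of $E$, since $\|\omega_n\|_p$ and $\|x_2\omega_n\|_1=\mu$ are uniformly bounded; the bound on $\|\omega_n\|_p$ is itself a consequence of $I_{\mu,\lambda}$ being finite together with the energy bound. Setting $\mu_n:=\int x_2\xi_{0,n}$, we obtain $\mu_n\to\mu$ and $-E_{p,\lambda}[\xi_{0,n}]\to I_{\mu,\lambda}$.

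\emph{Step 2: conservation along the flow.} For smooth, compactly supported, nonnegative initial data, the solution is transported by a divergence-free velocity tangent to $\partial\O$. Hence $\|\xi_n(t)\|_p$ is conserved, and so is the energy $E[\xi_n(t)]$. The impulse $\int x_2\xi_n(t)\,dx$ is where the argument depends on the domain. On $\R^2_+$ and on $\S$, translation invariance in $x_1$ gives its conservation. On a general $\D$ this fails, so we instead work with the impulse for $\R^2_+$ and $\S$, and for $\D$ we use whatever monotonicity or conservation of $\int x_2\xi$ the earlier sections provide; we accept this from the setup of the paper. With these conservation laws, $\xi_n(t_n)\in K_{\mu_n}$ and $-E_{p,\lambda}[\xi_n(t_n)]=-E_{p,\lambda}[\xi_{0,n}]\to I_{\mu,\lambda}$.

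\emph{Step 3: conclusion.} Theorem \ref{thm:general convergence theorem without L1} now gives a subsequence with $\xi_n(t_n)\to\omega$ in $L^p$ and $x_2\xi_n(t_n)\to x_2\omega$ in $L^1$, where $\omega\in K_\mu$. By the continuity from Step 1, $-E_{p,\lambda}[\omega]=I_{\mu,\lambda}$, so $\omega\in S_{\mu,\lambda}$. This contradicts the choice of $\epsilon_0$.

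The main obstacle is Step 2 on $\D$. There the impulse is not obviously conserved, and its conservation, or at least a one-sided control of it that combines with monotonicity of $I_{\mu,\lambda}$ in $\mu$, must be justified. A secondary technical point is justifying the conservation laws rigorously when $\O$ is unbounded, which requires enough decay of the solution. We would handle this with cutoff arguments and the finiteness of $\|x_2\xi_0\|_1$.
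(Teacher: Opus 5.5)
On $\R^2_+$ and $\S$ your argument is the paper's argument: contradiction, the energy-difference estimate \eqref{ineq:estimate of energy difference} to make the initial data a minimizing sequence, conservation of $\|\xi\|_p$, of $E$ and of the impulse, and then Theorem \ref{thm:general convergence theorem without L1}.

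The genuine gap is the one you flag in Step 2 for $\O=\D$. It cannot be closed by accepting it ``from the setup of the paper'', because no earlier section provides conservation or monotonicity of $\int x_2\xi\,dx$ on $\D$. The paper's own proof asserts $\|x_2\xi_n\|_1=\|x_2\xi_{0,n}\|_1$ on the grounds that $\xi$ is transported along particle trajectories. That justification is not valid: a measure-preserving flow only preserves rearrangement-invariant quantities such as $\|\xi\|_r$, while the impulse comes from horizontal translation invariance. Indeed, use $u\cdot\mathcal N=0$, $\nabla\cdot u=0$, and $\psi=0$ on $\p\O$ (so $\nabla\psi=(\p_{\mathcal N}\psi)\mathcal N$ there). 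Then
\[
\frac{d}{dt}\int_{\O}x_2\xi\,dx=\int_{\O}u_2\,\xi\,dx=-\int_{\O}\Delta\psi\,\p_1\psi\,dx=-\frac12\int_{\p\O}(\p_{\mathcal N}\psi)^2\,\mathcal N_1\,dS .
\]
This vanishes when $\p\O$ consists of horizontal lines, as for $\R^2_+$ and $\S$. It does not vanish in general for curved boundaries, for instance for the bounded domains that the class $\D$ is said to contain.

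Hence on $\D$ the evolved data $\xi_n(t_n)$ need not lie in some $K_{\mu_n'}$ with $\mu_n'\to\mu$, and Theorem \ref{thm:general convergence theorem without L1} cannot be invoked. This is not a mere technicality. Every element of $S_{\mu,\lambda}$ has impulse exactly $\mu$, so the left side of \eqref{eqn:stable result} is at least $\bigl|\int x_2\xi(t)\,dx-\mu\bigr|$, and any proof must control the impulse along the flow.

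Your fallback needs two ingredients, and the paper proves neither:
\begin{itemize}
\item a one-sided bound $\int x_2\xi(t)\,dx\le\int x_2\xi_0\,dx$ along the Euler flow on $\D$;
\item strict decrease of $\mu\mapsto I_{\mu,\lambda}$ on $\D$, so that $I_{\mu',\lambda}\le I_{\mu,\lambda}$ together with $\mu'\le\mu$ forces $\mu'=\mu$.
\end{itemize}
Strict decrease is available only on $\R^2_+$ (by scaling) and on $\S$ (by strict subadditivity).

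So for Case \ref{case:general convergence on finite volume} your proposal, like the paper's proof, is incomplete at exactly this step. Closing it would require a genuinely new ingredient, or restricting the statement to domains on which the impulse is conserved.
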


\begin{remark}[Comparison with previous work] Theorems \ref{thm:general convergence theorem without L1} and \ref{thm:stable wrt minimizer} unify and extend the following earlier results:
\begin{enumerate}
    \item For $\O=\R^2_+$, we extend the mass-free $L^2$ stability in \cite{Abe2025StabilityOL} to $L^p$ for $p>4/3$, and complement such stability from penalized energy perspective;
    \item For $\O=\S$, we get rid of the smallness assumption of $\mu$ for $L^2$ stability in \cite{Dong2026StabilityOV}, and generalized it to mass-free $L^p$ stability for $p \geq 2$ with an explicit parameter threshold;
    \item For $\O=\D$, we extend the $L^2$ stability in \cite{Dong2026StabilityOV} to $L^p$ based on different $q$ without the $L^1$ constraint.
\end{enumerate}
\end{remark}

It is worth noting that when $\O=\S$, Theorem \ref{thm:stable wrt minimizer} only establishes stability for certain ranges of $p$ and the decay rate $q$; in fact, $\D$ should have additional properties.
Therefore, as a supplement, we impose an $L^1$-norm upper bound on the admissible function space and obtain the following stability for all $q \geq 0$ and $p > 1$,
\begin{thm}\label{thm:stable wrt minimizer with L1 bound}
    Let $\O=\D$, $p, \nu, \mu, \lambda$ given in the Theorem \ref{thm:general convergence for D with L1}, then $\widetilde{S}_{\mu,\nu,\lambda}$ is orbitally stable in the sense that: for any $\epsilon>0$, there exists $\delta>0$ such that $\forall ~ \xi_0\in C_c^{\infty}(\Omega), ~ x_2\xi_0\in L^1(\Omega), ~ \|\xi_0\|_1\leq \nu ~, ~ \xi_0\geq0$ and 
    \begin{align*}
        \inf_{\omega\in \widetilde{S}_{\mu, \nu, \lambda}}\Big\{ \|\xi_0-\omega\|_{p}+\|x_2(\xi_0-\omega)\|_{1} \Big\}\leq \delta,
    \end{align*}
    the solution $\xi(t)$ satisfy: $\forall ~ t \in [0,T_{\text{lifespan}})$,
    \begin{align}
        \inf_{\omega\in \widetilde{S}_{\mu, \nu, \lambda}}\Big\{ \|\xi(t)-\omega\|_{p}+\|x_2(\xi(t)-\omega)\|_{1} \Big\}\leq \epsilon. \label{eqn:stable result with L1}
    \end{align}
\end{thm}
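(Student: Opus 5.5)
We argue by contradiction, following the scheme of Abe and Choi, and we rely on a compactness result for the $L^1$-constrained problem, Theorem \ref{thm:general convergence for D with L1}. This is the analogue of Theorem \ref{thm:general convergence theorem without L1} for the admissible class
\[
\widetilde K_{\mu,\nu}=\Big\{\omega\in L^p(\D)\;\Big|\;\omega\ge0,\ \textstyle\int_\D x_2\omega\,dx=\mu,\ \|\omega\|_1\le\nu\Big\}.
\]
It asserts that every minimizing sequence with $\mu_n\to\mu$ is precompact in the norm $\|\cdot\|_p+\|x_2\,\cdot\|_1$, up to a subsequence, with limit in $\widetilde S_{\mu,\nu,\lambda}$.

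Suppose the stability statement fails. Then there are $\epsilon_0>0$, initial data $\xi_{0,n}\in C_c^\infty(\D)$ with $\xi_{0,n}\ge0$, $\|\xi_{0,n}\|_1\le\nu$, $x_2\xi_{0,n}\in L^1$, and minimizers $\omega_n\in\widetilde S_{\mu,\nu,\lambda}$ with
\[
\|\xi_{0,n}-\omega_n\|_p+\|x_2(\xi_{0,n}-\omega_n)\|_1\to0,
\]
together with times $t_n$ at which the distance in \eqref{eqn:stable result with L1} from $\xi_n(t_n)$ to $\widetilde S_{\mu,\nu,\lambda}$ is at least $\epsilon_0$.

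Next we record the conserved quantities along the Euler flow with slip boundary condition. Since the velocity is divergence free and tangent to $\partial\D$, every $L^r$ norm of the vorticity is transported; in particular $\|\xi_n(t)\|_1\le\nu$ and $\|\xi_n(t)\|_p=\|\xi_{0,n}\|_p$. The kinetic energy $E$ is conserved. The impulse $\int x_2\xi_n\,dx$ is conserved, or at least we must check whether it is on a general $\D$, as discussed below. Hence $\xi_n(t_n)\in\widetilde K_{\mu_n,\nu}$ with $\mu_n=\int x_2\xi_{0,n}\to\mu$. To get $-E_{p,\lambda}[\xi_n(t_n)]=-E_{p,\lambda}[\xi_{0,n}]\to I_{\mu,\nu,\lambda}$, we need continuity of $E$ with respect to the norm $\|\cdot\|_p+\|x_2\cdot\|_1$ on sets where the $L^1$ norm is bounded. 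For this we use the pointwise bound \eqref{ineq:comparison between green's function}, $G\le G_H$, together with the standard estimate of $\int\!\!\int G_H\,|f||g|$ by $\|f\|_1$, $\|f\|_p$ and $\|x_2f\|_1$, which is the same estimate used to show $I$ is finite. With these facts, $\{\xi_n(t_n)\}$ is a minimizing sequence. Theorem \ref{thm:general convergence for D with L1} then yields a subsequence converging to some $\omega\in\widetilde S_{\mu,\nu,\lambda}$ in the norm of \eqref{eqn:stable result with L1}. This contradicts the choice of $\epsilon_0$.

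The main obstacle is the conservation of the impulse $\int_\D x_2\xi\,dx$, because $\D$ need not be translation invariant in $x_1$. Differentiating in time gives
\[
\frac{d}{dt}\int_\D x_2\xi\,dx=\int_\D u_2\,\xi\,dx ,
\]
and on $\R^2_+$ this vanishes by the antisymmetry of the kernel; on a general $\D$ it does not. We would first try to follow the treatment in \cite{Dong2026StabilityOV}, where the $L^2$ case on $\D$ was handled. If exact conservation fails, we use monotonicity instead: show $\int x_2\xi(t)\le\mu_n$, or control the change of impulse. We would then observe that the minimal value $I_{\mu,\nu,\lambda}$ is monotone in $\mu$, and that a minimizing sequence with $\mu_n\to\mu'$ can be compared using the energy identity. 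We would also work in the admissible class with $\int x_2\omega\le\mu$ if needed, checking that minimizers saturate the constraint. A secondary technical point is that the solution exists only on $[0,T_{\text{lifespan}})$ for smooth compactly supported data; the argument above applies uniformly for every $t$ in that interval, so this causes no difficulty.
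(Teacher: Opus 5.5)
Your argument follows the paper's route. The paper proves this theorem by rerunning the contradiction argument of Theorem \ref{thm:stable wrt minimizer}, with Theorem \ref{thm:general convergence for D with L1} in place of Theorem \ref{thm:general convergence theorem without L1}. Your steps on conservation of $\|\xi\|_1$, $\|\xi\|_p$ and $E$, and on the continuity of $E_{p,\lambda}$, are fine.

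\textbf{The gap.} It is the step you flag yourself and then leave open. To place $\xi_n(t_n)$ in $\widetilde K_{\mu_n,\nu}$ you need $\int_\D x_2\xi_n(t_n)\,dx=\mu_n$. For smooth, sufficiently decaying solutions, $\nabla\cdot u=0$ and $u\cdot\mathcal N=0$ give
\[
\frac{d}{dt}\int_\D x_2\xi\,dx=\int_\D u_2\,\xi\,dx=\frac12\int_{\partial\D}|u|^2\,\mathcal N_1\,dS .
\]
This vanishes when the boundary consists of horizontal lines ($\R^2_+$, $\S$), but not for general $\D$. For example, take a disk in $\R^2_+$, which belongs to the class $\D$. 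A concentrated vortex there revolves about the center, so its impulse oscillates.

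\textbf{Why your fallbacks fail.} Neither conservation nor a one-sided bound $\int x_2\xi(t)\le\mu_n$ is available.
\begin{itemize}
\item Relaxing to $\int x_2\omega\le\mu$ only helps if the impulse cannot increase, and it can.
\item Monotonicity of $\widetilde I_{\mu,\lambda}$ in $\mu$ does not address the issue at all.
\item No reworking of the compactness step can sidestep it. The conclusion \eqref{eqn:stable result with L1} already contains $\|x_2(\xi(t)-\omega)\|_1\ge\bigl|\int_\D x_2\xi(t)\,dx-\mu\bigr|$, so any proof must keep the impulse $\epsilon$-close to $\mu$ for all $t$.
\end{itemize}

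\textbf{What the paper does here.} The paper does not supply the missing ingredient either. In the proof of Theorem \ref{thm:stable wrt minimizer} it asserts $\|x_2\xi_n\|_1=\|x_2\xi_{0,n}\|_1$ on the grounds that $\xi$ is transported along particle trajectories. Transport preserves only rearrangement-invariant quantities such as $\|\xi\|_r$, not moments against the non-invariant weight $x_2$. That reasoning is valid only when the kernel is invariant under horizontal translations ($\R^2_+$, $\S$).

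So your diagnosis is more accurate than the paper's text, but neither argument is a complete proof on general $\D$. To close it you would need an extra hypothesis on $\D$ that kills the boundary term above, or a reformulation of the constraint and the distance that does not involve the impulse.
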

However, since this part involves a modification of the admissible space, its proof is somewhat detached from that of the other theorems; we therefore place its notations and discussion in Appendix \ref{sec:stability for D with L1 bound}.    

\subsection{Major difficulties}
Adapting the proofs of \cite{abe2022stability}, \cite{Abe2025StabilityOL}, \cite{abe2025existence} and \cite{Dong2026StabilityOV} to a general domain $\O \subseteq \R^2_+$ without $L^1$-norm constraint encounters several main difficulties that fundamentally require new tools.

\subsubsection{Losing of natural bounds.} \label{sec:losing natural bound}
In the absence of an $L^1$ or $L^p$ upper bound, minimizing sequences for $-E_{p,\lambda}$ may not be precompact in $L^p$. In \cite{abe2022stability} and \cite{Dong2026StabilityOV}, this was remedied by imposing a priori $L^1$ bound; in \cite{abe2025existence}, the kinetic energy maximization problem is naturally equipped with a prior $L^p$ bound. To remove such a given bound, one should adapt the control of the kinetic energy without an $L^1$ bound as developed in \cite{abe2025existence} to penalized energy functional model. For this functional, when $I_{\mu,\lambda}<0$ and $\frac{2p}{3p-2}<p$, one can use Hölder's inequality to obtain a uniform $L^p$ upper bound for the minimizing sequence and complete the proof. The critical threshold is $p=4/3$.

This is one of the main reasons why our main theorem only includes stability for $p>4/3$. To push beyond the critical point $4/3$, one would need to improve the upper bound provided in by \cite{abe2025existence}. However, the estimate in \cite{abe2025existence} is the optimal value obtained by optimizing the different contributions of the far and near fields of $\omega$ in the integrals over $\mathbb{R}^2_+$, making it rather difficult to reach $4/3$ on $\mathbb{R}^2_+$. As for $\S$ or $D$, the Green's function $G_S$ of $\S$ is difficult to handle, and the Green function $G_\D$ of $\D$ has no explicit expression, making further progress in either case difficult.

\subsubsection{Losing homogeneity.} \label{sec:in homogeneous issue}
Unlike the stability study for $L^2$, the non-homogeneity of $E[\omega]$ and the penalization term $\|\omega\|_p^p$ in $L^p$ setting lead to difficulties with both coercivity and strict subadditivity issues. That is, the following may fail:
\begin{align*}
    I_{\mu,\lambda}<0, \quad I_{\mu,\lambda} < I_{\alpha,\lambda}+I_{\mu-\alpha,\lambda} \tt{ for } 0<\alpha<\mu.
\end{align*}
Losing these structures is a serious obstacle to proving the existence of minimizer via the Lions’ compactness principle. As stated in Section \ref{sec:losing natural bound}, the failure of $I_{\mu,\lambda}<0$ leads to the loss of precompactness, so $p,\mu,\lambda$ needs to be chosen carefully. Besides these parameters, the geometric features of $\O$ also play a key role. Therefore, we adopt the approach in \cite{Dong2026StabilityOV} to obtain a larger feasible parameter range and adjust this range in subsequent research. 

Losing $I_{\mu,\lambda} < I_{\alpha,\lambda}+I_{\mu-\alpha,\lambda} \tt{ for } 0<\alpha<\mu$ is another issue. Without such strict subadditivity, one can not exclude the dichotomy case in the Lions’ compactness principle and thus break the framework. To resolve it, we employ three different strategies for $\R^2_+$, $\S$, and $\D$ respectively. The solution for $\R^2_+$ is scaling, i.e. write $I_{\mu,\lambda}$ to be a multiple of $I_{1,\lambda}$. The methods for $\S$ and $\D$ are explained in following subsections.

\subsubsection{Loss of spatial scaling.}  \label{sec:difficulty_loss_scaling}
As explained in Section 1.1.1 in \cite{Dong2026StabilityOV}, the scaling method is not applicable when $\O=\S$ or $\D$. In strips, it affects the proof of strict subadditivity mentioned in Section \ref{sec:in homogeneous issue} which means that we need a more refined discussion to the behavior of the minimizing sequence. To finish such goal, we develop an alternative route to proving such a property. First, find a suitable range of $(\mu,\lambda)$ such that $I_{\mu,\lambda}<0$ and the lower bound for $\lambda$ is independent from $\mu$. Next, we further investigate the compact support property of minimizers of $-E_{p,\lambda}$ within this parameter range. Then, using Steiner symmetrization, we focus on a class of candidate functions and study the concentration of kinetic energy and its continuity under weak convergence for these functions. With such continuity, the existence of a minimizer is proved. Finally, combining the above materials, the existence of a compactly supported minimizer yields the strict subadditivity of $I_{\mu,\lambda}$.


\subsubsection{Loss of horizontal translation invariance.} 
It is worth noting that the new proof of strict subadditivity for $\S$ relies heavily on the translation invariance, a property that the class of domains $\D$ does not possess. To overcome this difficulty, we construct a new path that avoids the concentration-compactness argument. We exploit the decay of the domain itself to establish compactness. If the decay rate $q$ lies in $[0,2]$, the minimizing sequence is compact for any $p>4/3$. When $q>2$, concentration and stability can only be guaranteed for $p\in (4/3, q/(q-1)]$, which gives no conclusion for $p\geq 4$.

As mentioned in Section \ref{sec:main results}, in Appendix \ref{sec:stability for D with L1 bound}, we demonstrate the $L^p$ stability of any $q \geq 0$ and $p > 1$ under an $L^1$ constraint following the method in \cite{Dong2026StabilityOV} which aims to illustrate further properties of $\D$.

\subsection{Organization of the paper.} The paper is organized as follows. In Section \ref{sec:minimization problem}, we establish the fundamental energy estimates and prove negativity and continuity of $I_{\mu,\lambda}$. Section \ref{sec:study of half plane and strip} is devoted to the half-plane and strip cases, where we prove strict subadditivity (Proposition \ref{prop:strictly subadditivity}) and establish the case \ref{case:general convergence on half-plane} and \ref{case:general convergence on strips} of Theorem \ref{thm:general convergence theorem without L1}. Section \ref{sec:general convergence theorem for D} treats the weak finite-volume domain $\D$, proving the weak continuity of the kinetic energy (Proposition \ref{prop:weak_finite_volume_energy_convergence}) and the case \ref{case:general convergence on finite volume} of Theorem \ref{thm:general convergence theorem without L1}. In Section \ref{sec:stability}, we prove the orbital stability Theorem \ref{thm:stable wrt minimizer}. Section \ref{sec:discussion} concludes with a discussion of open problems and future directions. Appendix \ref{sec:stability for D with L1 bound} contains the stability theory for $\D$ with an 
$L^1$-norm constraint.

\subsection*{Acknowledgement}
The author is very grateful to the supervisor, Professor Chenyun Luo, for the help during the research, and also received support from Hong Kong RGC Grants CUHK--14301225. 

\section{A minimization problem}\label{sec:minimization problem}

We establish the basic properties of the minimization problem. Owing to the observation \eqref{ineq:comparison between green's function}, we derive several useful bounds for the kinetic energy and crossing terms. These are collected in Proposition \ref{prop:estimate for kinetic energy} and are analogous to those obtained in Lemma 2.3 and Lemma 2.4 in \cite{abe2025existence}. Next, we study the sign of $I_{\mu,\lambda}$ in Proposition \ref{prop:propoties of I_mu}, which adopts the idea of Proposition 2.1 in \cite{Dong2026StabilityOV}.

\begin{prop}\label{prop:estimate for kinetic energy}
    The following estimates hold for $p>1$ and nonnegative $\omega,\omega_i\in L^p(\Omega)$, satisfying $x_2\omega, x_2\omega_i\in L^1(\Omega)$ with a constant $C$ independent of $\omega$ and $\omega_i$:
    \begin{align}
        &\int_{\Omega}G(x,y)\omega(y)dy\leq Cx_2^{\frac{p-1}{2p-1}}\|x_2 \omega\|_1^{\frac{p-1}{2p-1}}\|\omega\|_p^{\frac{p}{2p-1}},\label{ineq:estimate of stream fct}\\
        &\int_{\Omega}G(x,y)\omega(y)dy\leq C\|x_2 \omega\|_1^{\frac{2p-2}{3p-2}}\|\omega\|_p^{\frac{p}{3p-2}},\label{ineq:sup estimate of stream fct}\\
        &\int_{\Omega}\int_{\Omega}G(x,y)\omega_1(y)\omega_2(x)dxdy\leq C\|x_2\omega_1\|_1^{\frac{2p-2}{3p-2}}\|\omega_1\|_p^{\frac{p}{3p-2}}\|x_2\omega_2\|_1^{\frac{2p-2}{3p-2}}\|\omega_2\|_p^{\frac{p}{3p-2}},\label{ineq:estimate of energy crossing term}\\
        &E[\omega]\leq C\|x_2\omega\|_1^{\frac{4p-4}{3p-2}}\|\omega\|_p^{\frac{2p}{3p-2}},\label{ineq:estimate of kinetic energy}\\
        &|E[\omega_1]-E[\omega_2]|\leq C\|x_2(\omega_1-\omega_2)\|_1^{\frac{2p-2}{3p-2}}\|\omega_1-\omega_2\|_p^{\frac{p}{3p-2}}\|x_2(\omega_1+\omega_2)\|_1^{\frac{2p-2}{3p-2}}\|\omega_1+\omega_2\|_p^{\frac{p}{3p-2}}.\label{ineq:estimate of energy difference}
    \end{align}
\end{prop}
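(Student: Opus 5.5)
By \eqref{ineq:comparison between green's function} and the nonnegativity of $\omega,\omega_i$, every left-hand side only increases if $G$ is replaced by $G_H$. So it suffices to prove the estimates on $\R^2_+$ (extending $\omega$ by zero), which makes the constants domain independent. The core is the pointwise bound \eqref{ineq:estimate of stream fct}. We use two elementary properties of $G_H$:
\begin{align*}
G_H(x,y)\le \frac{1}{4\pi}\frac{4x_2y_2}{|x-y|^2}, \qquad G_H(x,y)\le \frac{1}{4\pi}\ln\Big(1+\frac{C x_2^2}{|x-y|^2}\Big) \ \text{ when } |x-y|\le x_2/2 ,
\end{align*}
and, more generally, $G_H(x,y)\lesssim \min\{x_2y_2|x-y|^{-2},\ 1+\ln^+(x_2/|x-y|)\}$ in the regime $y_2\lesssim x_2$. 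Fix $x$ and a radius $R>0$, and split the integral into $\{|x-y|\ge R\}$ and $\{|x-y|<R\}$.

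On the far region, the first bound gives a contribution $\le C x_2 R^{-2}\|x_2\omega\|_1$. On the near region, where $y_2\le x_2+R$, apply Hölder with the $L^{p'}$ norm of the kernel over a disc of radius $R$. If $R\lesssim x_2$, the logarithmic bound gives $\|G_H(x,\cdot)\|_{L^{p'}(B_R)}\lesssim R^{2/p'}(1+\ln(x_2/R))$. If $R\gtrsim x_2$, the bound $x_2y_2|x-y|^{-2}\lesssim x_2 (x_2+R)|x-y|^{-2}$ combined with $G_H\lesssim \ln$ near the pole gives a comparable power. The cleaner choice is to bound the kernel uniformly by $C x_2^{a}|x-y|^{-a}$ for a suitable $a\in(0,2/p')$. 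Then the near part is $\le C x_2^{a}R^{2/p'-a}\|\omega\|_p$, and the far part is as above. Optimizing in $R$, which means balancing $x_2R^{-2}\|x_2\omega\|_1$ against $x_2^{a}R^{2/p'-a}\|\omega\|_p$, must produce exactly the exponents $x_2^{\frac{p-1}{2p-1}}\|x_2\omega\|_1^{\frac{p-1}{2p-1}}\|\omega\|_p^{\frac{p}{2p-1}}$. Scaling dictates the choice $a=1$, i.e.\ $G_H\lesssim \sqrt{x_2y_2}/|x-y|$ (which holds since $\ln(1+s)\lesssim \sqrt s$) together with $y_2\le x_2+R$. Getting this near-field kernel bound to yield precisely the stated homogeneity is the step needing care. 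I would follow Lemma 2.3 of \cite{abe2025existence}, splitting further into $y_2\le 2x_2$ and $y_2>2x_2$ if needed, and verifying the exponents by dimensional analysis.

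For the uniform bound \eqref{ineq:sup estimate of stream fct}, combine \eqref{ineq:estimate of stream fct}, which is good for small $x_2$, with a bound that improves for large $x_2$. Using $G_H\le Cx_2y_2|x-y|^{-2}$ with the same near/far split but keeping the $y_2$ weight gives an estimate like $\psi(x)\lesssim x_2^{-\beta}\|x_2\omega\|_1^{\gamma}\|\omega\|_p^{1-\gamma}$. Taking the minimum of the two and optimizing over $x_2$ yields the exponents $\frac{2p-2}{3p-2}$ and $\frac{p}{3p-2}$; again scaling ($x\mapsto sx$, $\omega\mapsto s^{-k}\omega(\cdot/s)$) forces these exponents. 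Alternatively, one can directly repeat the $R$-optimization with the kernel bound $G_H\lesssim \min\{1+\ln^+(1/|x-y|),\, y_2/|x-y|\}$, choosing the split adapted to $y_2$.

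The remaining estimates follow formally. For \eqref{ineq:estimate of energy crossing term}, write the double integral as $\int \psi_1\omega_2$. Use \eqref{ineq:estimate of stream fct} for $\psi_1$ and interpolate $\int x_2^{\theta}\omega_2\le \|x_2\omega_2\|_1^{\theta'}\|\omega_2\|_p^{1-\theta'}$ via a near/far split in $x_2$, or use the symmetric form obtained by a Cauchy–Schwarz argument on the positive definite kernel $G$: $\int\!\int G\omega_1\omega_2\le (2E[\omega_1])^{1/2}(2E[\omega_2])^{1/2}$. Hence \eqref{ineq:estimate of energy crossing term} follows from \eqref{ineq:estimate of kinetic energy}, and \eqref{ineq:estimate of kinetic energy} itself comes from $E[\omega]=\frac12\int\psi\omega$, with $\psi$ bounded via \eqref{ineq:estimate of stream fct} and $\int x_2^{\frac{p-1}{2p-1}}\omega$ interpolated between $\|x_2\omega\|_1$ and $\|\omega\|_p$ by splitting $\{x_2<\tau\}$ and $\{x_2\ge\tau\}$ and optimizing $\tau$. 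Finally, \eqref{ineq:estimate of energy difference} uses bilinearity and symmetry, $E[\omega_1]-E[\omega_2]=\frac12\int\!\int G(\omega_1-\omega_2)(\omega_1+\omega_2)$, which is bounded by $\frac12\int\!\int G|\omega_1-\omega_2|(\omega_1+\omega_2)$. Then apply \eqref{ineq:estimate of energy crossing term} to the nonnegative functions $|\omega_1-\omega_2|$ and $\omega_1+\omega_2$.
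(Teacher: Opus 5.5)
Your reduction to $\R^2_+$ via $0\le G\le G_H$ (extending by zero) is exactly the paper's argument. You are also right that \eqref{ineq:estimate of energy difference} must go through the nonnegative functions $|\omega_1-\omega_2|$ and $\omega_1+\omega_2$, since its left side is not monotone in the kernel. For $\R^2_+$ the paper then simply cites Lemmas 2.3--2.4 of \cite{abe2025existence}. Had you cited those for all five bounds, your proposal would coincide with the paper. The problem is the energy bound you derive yourself instead.

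\textbf{The failing step.} You obtain \eqref{ineq:estimate of kinetic energy}, and through it or your first route also \eqref{ineq:estimate of energy crossing term} and \eqref{ineq:estimate of energy difference}, by integrating the pointwise bound \eqref{ineq:estimate of stream fct} against $\omega$. You then interpolate $\int x_2^{a}\omega$, with $a=\frac{p-1}{2p-1}<1$, between $\|x_2\omega\|_1$ and $\|\omega\|_p$ via a split at $x_2=\tau$. This fails for two reasons:
\begin{itemize}
\item The slab $\{x_2<\tau\}\subset\R^2_+$ has infinite area, so Hölder there gives nothing.
\item In fact no inequality $\int x_2^{a}\omega\le C\|x_2\omega\|_1^{\theta}\|\omega\|_p^{1-\theta}$ holds for $a<1$. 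For $\omega_N=\mathbf 1_{[0,N]\times[1,2]}$ the left side is of order $N$ and the right side of order $N^{\theta+(1-\theta)/p}$. This forces $\theta=1$, and then the scaling $\omega\mapsto\omega(\cdot/s)$ forces $a=1$.
\end{itemize}
Without an $L^1$ bound, a pointwise bound on $\psi$ growing like $x_2^{a}$ with $a<1$ is too lossy to integrate against $\omega$.

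\textbf{The fix.} Keep the estimate bilinear and split the weight symmetrically between the two copies of the vorticity. From $\ln(1+s)\le C_\beta s^\beta$ you get $G_H(x,y)\le C(x_2y_2)^{\beta}|x-y|^{-2\beta}$. Hardy--Littlewood--Sobolev then gives
\[
\iint G_H(x,y)\,\omega_1(y)\,\omega_2(x)\,dx\,dy\le C\|x_2^{\beta}\omega_1\|_{r}\,\|x_2^{\beta}\omega_2\|_{r},\qquad r=\tfrac{2}{2-\beta}.
\]
Hölder gives $\|x_2^{\beta}\omega\|_r\le\|x_2\omega\|_1^{\beta}\|\omega\|_p^{1-\beta}$ exactly when $\frac1r=\beta+\frac{1-\beta}{p}$, i.e.\ $\beta=\frac{2(p-1)}{3p-2}$. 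This is \eqref{ineq:estimate of energy crossing term} directly, with \eqref{ineq:estimate of kinetic energy} as its diagonal case. Your Cauchy--Schwarz step becomes unnecessary, and your derivation of \eqref{ineq:estimate of energy difference} then goes through unchanged.

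\textbf{The pointwise bound.} The same symmetric weighting explains \eqref{ineq:estimate of stream fct}. On $\{|x-y|<R\}$, move $y_2^{\beta}$ (with $\beta$ small) onto $\omega$ by the same interpolation. Balancing against the far term $\frac{x_2}{\pi R^2}\|x_2\omega\|_1$ gives $R^{2(2p-1)/p}=x_2\|x_2\omega\|_1/\|\omega\|_p$, independently of $\beta$, and hence exactly the stated exponents. Your remark that ``scaling dictates $a=1$'' is not correct, for two reasons:
\begin{itemize}
\item The two scalings $\omega\mapsto c\,\omega(\cdot/s)$ leave a one-parameter family of admissible exponents in \eqref{ineq:estimate of stream fct}, because the power of $x_2$ is free. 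Dimensional analysis alone cannot select the stated one.
\item $|x-y|^{-1}\notin L^{p'}_{\mathrm{loc}}$ when $p\le 2$.
\end{itemize}
For \eqref{ineq:sup estimate of stream fct}, by contrast, there is no free $x_2$ power, so scaling does force the exponents. That part of your sketch is sound once you prove a uniform bound for data normalized to $\|x_2\omega\|_1=\|\omega\|_p=1$.
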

\begin{proof}
    If $\O=\R^2_+$, it is a combination of Lemma 2.3 and Lemma 2.4 in \cite{abe2025existence}. As for the case $\O=\S$ or $\D$, one can view $L^p(\Omega)=\{\omega\in L^p(\R_+^2) ~ | ~ \spt\omega\subseteq \overline{\O} \}$ and use the inequality $ 0\leq G(x,y)\leq G_H(x,y)$ to obtain
    \begin{align*}
        0<\int_{\O}G(x,y)\omega(y)dy\leq \int_{\R^2_+}G_H(x,y)\omega(y)dy.
    \end{align*}
    All estimates for cases $\O=\S$ and $\D$ then follow directly from the corresponding estimates for $\O=\R^2_+$.
    
\end{proof}

These estimates, while elementary, are fundamental for the rest of the paper. They not only provide uniform bounds for general minimizing sequences (as will be seen in Remark \ref{remark:uniformly bounded}) but also serve as the workhorse for controlling error terms in convergence arguments, such as the proofs of Proposition \ref{prop:strictly subadditivity}, \ref{prop:weak_finite_volume_energy_convergence}, and Theorem \ref{thm:general convergence theorem without L1}. 

Next, we shall prove that $I_{\mu,\lambda}$ is negative for a suitable class of parameters $(\mu,\lambda)$. 

\begin{prop}\label{prop:propoties of I_mu} Recall the definition of $I_{\mu,\lambda}=I_{\mu,\lambda}(p)$ in \eqref{def:minimizing problem}. Let $p>4/3$, then
    \begin{align}
        & I_{0, \lambda}=0, && \text{ for any $\lambda\in \R$}, \label{eqn:I_0}\\
        & I_{\mu, \lambda}>-\infty, && \text{ for any } \mu\geq 0 \text{ and } \lambda>0 ,\label{eqn:I_mu is finite}\\
        & I_{\mu, \lambda}<0, && \text{ for any } (\mu,\lambda)\in \bigcup_{K\subseteq \Omega \text{ and } |K|<\infty}
         (0, +\infty ) \times (\lambda_{K,\mu},\infty ). \label{ineq:negativity of I_mu}
    \end{align}
    where
    \begin{align*}
        \lambda_{K,\mu}=\frac{2 \mu^{p-2}\tt{Vol(K)}}{p \int_K\int_K G(x,y) dydx}\left(\int_K x_2 dx\right)^{2-p}
    \end{align*}
\end{prop}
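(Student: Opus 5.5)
We prove the three assertions of Proposition \ref{prop:propoties of I_mu} in order. Each reduces either to the definition of $K_\mu$ or to the estimate \eqref{ineq:estimate of kinetic energy} of Proposition \ref{prop:estimate for kinetic energy}, combined with a suitable test function.

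\emph{The identity \eqref{eqn:I_0}.} If $\omega\in K_0$, then $\omega\ge 0$ and $\int_\O x_2\omega\,dx=0$. Since $x_2>0$ on $\O$, this forces $\omega=0$ almost everywhere, so $K_0=\{0\}$. Hence $I_{0,\lambda}=-E_{p,\lambda}[0]=0$ for every $\lambda$; the sign of $\lambda$ plays no role because the only admissible function is zero.

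\emph{The bound \eqref{eqn:I_mu is finite}.} Fix $\mu\ge0$, $\lambda>0$ and $\omega\in K_\mu$, and write $s=\|\omega\|_p$. By \eqref{ineq:estimate of kinetic energy},
\begin{align*}
-E_{p,\lambda}[\omega]\ \ge\ \frac{1}{p\lambda}s^p - C\mu^{\frac{4p-4}{3p-2}}s^{\frac{2p}{3p-2}}.
\end{align*}
The condition $p>4/3$ is exactly $\frac{2p}{3p-2}<p$, that is, $3p-2>2$. So the function $s\mapsto \frac{1}{p\lambda}s^p-C\mu^{\frac{4p-4}{3p-2}}s^{\frac{2p}{3p-2}}$ is bounded below on $[0,\infty)$, with a lower bound depending only on $p,\lambda,\mu$ and $C$. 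This gives $I_{\mu,\lambda}>-\infty$. The same computation also yields a uniform $L^p$ bound for minimizing sequences, which we record for later use.

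\emph{The negativity \eqref{ineq:negativity of I_mu}.} Let $K\subseteq\O$ with $0<|K|<\infty$, and assume the integrals below are finite and positive. Take the test function $\omega=c\,\mathbf 1_K$ with $c=\mu/\int_K x_2\,dx$, so that $\omega\in K_\mu$. Then
\begin{align*}
-E_{p,\lambda}[\omega]=-\frac{c^2}{2}\int_K\int_K G(x,y)\,dydx+\frac{c^p}{p\lambda}\,\mathrm{Vol}(K).
\end{align*}
This quantity is negative if and only if
\begin{align*}
\lambda>\frac{2c^{p-2}\mathrm{Vol}(K)}{p\int_K\int_K G},
\end{align*}
and substituting $c^{p-2}=\mu^{p-2}\left(\int_K x_2\right)^{2-p}$ shows that the right-hand side is exactly $\lambda_{K,\mu}$. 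Hence $I_{\mu,\lambda}\le -E_{p,\lambda}[\omega]<0$ whenever $\lambda>\lambda_{K,\mu}$, and taking the union over all admissible $K$ gives the stated set.

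The only delicate point is checking the assumptions hidden in the last step: $\int_K x_2\,dx$ must be finite and positive, and $\int_K\int_K G$ must be finite and positive. Positivity holds because $G>0$ in the interior of $\O$. Finiteness follows from \eqref{ineq:estimate of kinetic energy} applied to $\mathbf 1_K$, provided $x_2\mathbf 1_K\in L^1$. If needed, we restrict to bounded $K$, or simply interpret $\lambda_{K,\mu}$ only for those $K$ where these quantities make sense. Beyond this, the argument is a direct computation.
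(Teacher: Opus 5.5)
Your proposal is correct and follows essentially the same route as the paper: $K_0=\{0\}$, coercivity from \eqref{ineq:estimate of kinetic energy} via $\frac{2p}{3p-2}<p$, and the test function $c_0\mathbf{1}_K$ for negativity. The paper obtains the lower bound by Young's inequality, which gives the explicit estimate $I_{\mu,\lambda}\ge -C\lambda^{\frac{2}{3p-4}}\mu^{\frac{4p-4}{3p-4}}$; minimizing your function of $s$ gives the same bound, and this explicit form is reused later in Proposition \ref{prop:continuous of I_mu}. Your caveat that $K$ must have $|K|>0$ and $\int_K x_2\,dx<\infty$ is correct care that the paper leaves implicit; finiteness of $\int_K\int_K G$ then follows from \eqref{ineq:estimate of kinetic energy}.
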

\noindent\textit{Proof: } The property \eqref{eqn:I_0} is trivial since $K_0=\{0\}$. By \eqref{ineq:estimate of kinetic energy} and Young's inequality, for any $\omega\in K_\mu$,
\begin{align*}
    E_{p,\lambda}[\omega]&\leq C\|x_2\omega\|_1^{\frac{4p-4}{3p-2}}\|\omega\|_p^{\frac{2p}{3p-2}}-\frac{1}{p\lambda}\|\omega\|_p^p\\
    &\leq \left(C\eps^{-1}\|x_2\omega\|_1^{\frac{4p-4}{3p-2}}\right)^{\frac{3p-2}{3p-4}}+\left(\eps^{\frac{3p-2}{2}}-\frac{1}{p\lambda}\right)\|\omega\|_p^p\\
    &\leq C\lambda^{\frac{2}{3p-4}}\mu^{\frac{4p-4}{3p-4}},
\end{align*}
where we choose $\eps=(p\lambda)^{-\frac{2}{3p-2}}$. Then 
\begin{align}
    I_{\mu,\lambda}=\inf_{\omega\in K_{\mu}}\big\{-E_{p,\lambda}[\omega ]\big\}\geq -C\lambda^{\frac{2}{3p-4}}\mu^{\frac{4p-4}{3p-4}}>-\infty.\label{ineq:lowerbound of I_mu,lambda}
\end{align}

For any $ \mu>0 $ and $ \lambda>\lambda_{K,\mu}$ where $K\subseteq \Omega \text{ and } Vol(K)<\infty$, define
\begin{align*}
    \omega_0=c_0 \mathbf{1}_{K}, \text{ where } c_0=\mu\left(\int_{K}x_2dx\right)^{-1}.
\end{align*}
It is easy to see $\omega_0\in K_\mu $ by the choice of $c_0$, and 
\begin{align}
    E_{p,\lambda}[\omega_0]&= \frac{1}{2}c_0^2\int_{K}\int_{K}G(x,y)dxdy-\frac{1}{p\lambda}c_0^p \tt{Vol}(K), \notag\\
    &=\frac{1}{2}c_0^2\left(\int_{K}\int_{K}G(x,y)dxdy-\frac{2c_0^{p-2}}{p\lambda}\tt{Vol}(K)\right), \notag\\
    &>0, \label{eqn:lowerbound of -I_mu}
\end{align}
where the last inequality is positive by the choice of $\lambda$. 

\rightline{$\Box$}

\begin{remark}[The minimizing sequence is bounded in $L^p$]\label{remark:uniformly bounded}
\end{remark}
Assume $p>4/3$. Any minimizing sequence $\{\omega_n\}$ satisfying $\omega_n\in K_{\mu_n}, \mu_n\rightarrow \mu$, and $-E_{p,\lambda}[\omega_n]\rightarrow I_{\mu,\lambda}$ with $\mu, \lambda$ follows the assumption in \eqref{ineq:negativity of I_mu} is uniformly bounded in $L^p$. Indeed, by \eqref{ineq:estimate of stream fct} and Young’s inequality
    \begin{align*}
        \frac{1}{p\lambda}\|\omega\|_{p}^p+E_{p,\lambda}[\omega]
        = E[\omega] \leq C\|x_2\omega\|_1^{\frac{4p-4}{3p-2}}\|\omega\|_p^{\frac{2p}{3p-2}} \leq & C\lambda^{\frac{2}{3p-4}}\|x_2\omega\|_1^{\frac{4p-4}{3p-4}}+\frac{1}{2p\lambda}\|\omega\|_p^p,
    \end{align*}
    then, 
    \begin{align*}
        \|\omega\|_{p}^p\leq C\lambda^{\frac{3p-2}{3p-4}}\|x_2\omega\|_1^{\frac{4p-4}{3p-4}}-2p\lambda E_{p,\lambda}[\omega].
    \end{align*}
    Thus by $I_{\mu,\lambda}<0$, the minimizing sequence satisfies 
    \begin{align}
        \limsup_{n\rightarrow +\infty}\|\omega_n\|_{p}\leq C\lambda^{\frac{3p-2}{3p^2-4p}}\mu^{\frac{4p-4}{3p^2-4p}} + 2p\lambda I_{\mu,\lambda} < C\lambda^{\frac{3p-2}{3p^2-4p}}\mu^{\frac{4p-4}{3p^2-4p}}. 
        \label{ineq:minimizing seq uniformly bounded}
    \end{align}
    In particular, if $\omega$ is a minimizer, then 
    \begin{align}
        \|\omega\|_{p}\leq C\lambda^{\frac{3p-2}{3p^2-4p}}\mu^{\frac{4p-4}{3p^2-4p}}. \label{ineq:uniformly bounded}
    \end{align}

\begin{remark}[Behavior of $\mu,\lambda$]\label{remark:Behavior of mu, lambda}
\end{remark}
As mentioned in Section \ref{sec:losing natural bound}, $I_{\mu,\lambda}$ is not negative for all $p>1$, $\mu,\lambda>0$. This depends not only on the choice of $p$ but, more crucially, on the domain's geometric features. In Section \ref{sec:parameter analysis on half plane and strip}, we discuss in detail the range of $\mu,\lambda$ for which $I_{\mu,\lambda}<0$ in the case $\O=\R^2_+$ and $\O=\S$ with suitable $p$. For $\Omega=\D$, we do not carry out a corresponding discussion, because in that approach only the negativity of $I_{\mu,\lambda}$ is needed and no additional properties are required.

\hspace*{\fill} 

For studies in the $L^2$ or on $\mathbb{R}^2_+$, homogeneity or spatial scaling methods always yield continuity of $I_{\mu,\lambda}$ with respect to $\mu$. This continuity is also indispensable for excluding the dichotomy case when proving the existence of a minimizer via Lions' compactness principle. According to Remark \ref{remark:uniformly bounded}, the minimiziation problem \eqref{def:minimizing problem} equipped with a $L^p$ upper bound in terms of $\mu$ and $\lambda$ in $K_{\mu}$, thereby we are able to prove continuity of  $I_{\mu,\lambda}$ with respect to $\mu$ in a more general framework.

\begin{prop}\label{prop:continuous of I_mu}
    For any $p>4/3$ and $(\mu,\lambda)$ satisfy \eqref{ineq:negativity of I_mu}, the map $\mu \mapsto I_{\mu,\lambda}$
    is continuous.
\end{prop}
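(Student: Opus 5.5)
The approach is to compare $I_{\mu,\lambda}$ and $I_{\mu',\lambda}$ by rescaling the \emph{amplitude} of near-minimizers, not the space variable. Multiplying by a constant preserves nonnegativity and $L^p$ membership and scales the first moment linearly, so it maps $K_{\mu}$ onto $K_{\mu'}$. Fix $\lambda$ and $\mu_0>0$ with $(\mu_0,\lambda)$ in the region \eqref{ineq:negativity of I_mu}, and let $\mu_n\to\mu_0$.

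\textbf{Upper semicontinuity.} Given $\eps>0$, pick $\omega\in K_{\mu_0}$ with $-E_{p,\lambda}[\omega]\le I_{\mu_0,\lambda}+\eps$. Set $\omega_n=(\mu_n/\mu_0)\omega\in K_{\mu_n}$. Then
\begin{align*}
-E_{p,\lambda}[\omega_n]=-\Big(\tfrac{\mu_n}{\mu_0}\Big)^2E[\omega]+\Big(\tfrac{\mu_n}{\mu_0}\Big)^p\tfrac{1}{p\lambda}\|\omega\|_p^p .
\end{align*}
Since $E[\omega]$ and $\|\omega\|_p$ are finite by \eqref{ineq:estimate of kinetic energy}, this tends to $-E_{p,\lambda}[\omega]$. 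Hence $\limsup_n I_{\mu_n,\lambda}\le I_{\mu_0,\lambda}+\eps$, and letting $\eps\to0$ gives upper semicontinuity. This direction needs no uniformity, and it also covers the case $\mu_0=0$ trivially.

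\textbf{Lower semicontinuity.} For each $n$ choose $\omega_n\in K_{\mu_n}$ with $-E_{p,\lambda}[\omega_n]\le I_{\mu_n,\lambda}+1/n$. By the upper bound just proved, together with $I_{\mu_0,\lambda}<0$, we have $\limsup_n(-E_{p,\lambda}[\omega_n])\le I_{\mu_0,\lambda}<0$.

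The key step is to obtain a uniform $L^p$ bound on $\omega_n$ without assuming beforehand that $\{\omega_n\}$ is minimizing for $\mu_0$. The inequality in Remark \ref{remark:uniformly bounded},
\begin{align*}
\|\omega\|_p^p\le C\lambda^{\frac{3p-2}{3p-4}}\|x_2\omega\|_1^{\frac{4p-4}{3p-4}}-2p\lambda E_{p,\lambda}[\omega],
\end{align*}
holds for every admissible $\omega$ when $p>4/3$. Together with $\mu_n\to\mu_0$ and $-E_{p,\lambda}[\omega_n]$ bounded above, it gives $\sup_n\|\omega_n\|_p<\infty$.

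Then set $\tilde\omega_n=(\mu_0/\mu_n)\omega_n\in K_{\mu_0}$ and compute
\begin{align*}
I_{\mu_0,\lambda}\le -E_{p,\lambda}[\tilde\omega_n]=-E_{p,\lambda}[\omega_n]+\Big(1-\big(\tfrac{\mu_0}{\mu_n}\big)^2\Big)E[\omega_n]+\Big(\big(\tfrac{\mu_0}{\mu_n}\big)^p-1\Big)\tfrac{1}{p\lambda}\|\omega_n\|_p^p .
\end{align*}
By \eqref{ineq:estimate of kinetic energy}, $E[\omega_n]\le C\mu_n^{\frac{4p-4}{3p-2}}\|\omega_n\|_p^{\frac{2p}{3p-2}}$ is uniformly bounded, and so is $\|\omega_n\|_p^p$. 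Both correction terms therefore tend to $0$, since their prefactors do. Hence $I_{\mu_0,\lambda}\le\liminf_n I_{\mu_n,\lambda}$.

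The main obstacle is this uniform $L^p$ bound in the lower-semicontinuity step. It is where both $p>4/3$ (Young's inequality with exponent $\frac{3p-2}{3p-4}$ must make sense) and the boundedness of $-E_{p,\lambda}[\omega_n]$ are essential. I would make sure the bound is stated for arbitrary admissible functions rather than only minimizing sequences. Negativity of $I$ is not strictly required for it; only upper boundedness, which the upper-semicontinuity step supplies.
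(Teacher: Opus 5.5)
Your argument is correct and is essentially the paper's proof. Both rescale the amplitude, $\omega\mapsto(\mu'/\mu)\omega$, to pass between $K_{\mu}$ and $K_{\mu'}$, and both control the correction terms with the uniform $L^p$ bound obtained from \eqref{ineq:estimate of kinetic energy} and Young's inequality, as in Remark \ref{remark:uniformly bounded}. Your split into upper and lower semicontinuity, where the step-one bound on $-E_{p,\lambda}$ yields $L^p$ bounds on the $\mu_n$ near-minimizers, is a slightly cleaner organization than the paper's restricted reformulation of the infimum, and it makes clear that only finiteness (not negativity) of $I_{\mu_0,\lambda}$ is actually used.
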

\begin{proof}
    Recall the observation \eqref{ineq:minimizing seq uniformly bounded}. One can rewrite the minimizing problem \eqref{def:minimizing problem} for any $(\mu,\lambda)$ satisfy the \eqref{ineq:negativity of I_mu} into:
    \begin{align}
        I_{\mu,\lambda}=\inf\left\{ E_{p,\lambda}[\omega] \mid \omega\in K_\mu \tt{ and } \|\omega\|_p\leq C\lambda^{\frac{3p-2}{3p^2-4p}}\mu^{\frac{4p-4}{3p^2-4p}} \right\}. \label{def:alternative minimizing problem}
    \end{align}
    Doing so, for any $(\tau,\lambda)$ satisfy \eqref{ineq:negativity of I_mu}, 
    \begin{align*}
        I_{\mu,\lambda} & =\inf\left\{ -E_{p,\lambda}[\omega] \mid \omega\in K_\mu \tt{ and } \|\omega\|_p\leq C\lambda^{\frac{3p-2}{3p^2-4p}}\mu^{\frac{4p-4}{3p^2-4p}} \right\}, \\ 
        & =\inf\left\{ -E_{p,\lambda}[\frac{\mu}{\tau}\omega] \mid \omega\in K_\tau \tt{ and } \|\omega\|_p\leq C\lambda^{\frac{3p-2}{3p^2-4p}}\tau \mu^{\frac{4p-4}{3p^2-4p}-1} \right\}, \\
        & =\frac{\mu^2}{\tau^2} \inf\left\{  -E_{p,\lambda}[\omega] - \frac{1}{p\lambda}\left( 1 - \frac{\mu^{p-2}}{\tau^{p-2}} \right)\|\omega\|_p^p \mid \omega\in K_\tau \tt{ and } \|\omega\|_p\leq C\lambda^{\frac{3p-2}{3p^2-4p}}\tau \mu^{\frac{4p-4}{3p^2-4p}-1} \right\}, \\ 
        & \geq \frac{\mu^2}{\tau^2} I_{\tau,\lambda} - C\frac{1}{p \lambda}\left| 1 - \frac{\mu^{p-2}}{\tau^{p-2}} \right|\lambda^{\frac{3p-2}{3p^2-4p}}\tau \mu^{\frac{4p-4}{3p^2-4p}-1}.
    \end{align*}
    Thus 
    \begin{align*}
        \lim_{\tau\rightarrow\mu}\left( I_{\mu,\lambda} - I_{\tau,\lambda}\right) 
        & \geq  \lim_{\tau\rightarrow\mu}\left( \left(\frac{\mu^2}{\tau^2} -1\right)I_{\tau,\lambda} - C\frac{1}{p \lambda}\left| 1 - \frac{\mu^{p-2}}{\tau^{p-2}} \right|\lambda^{\frac{3p-2}{3p^2-4p}}\tau \mu^{\frac{4p-4}{3p^2-4p}-1} \right), \\
        & = \lim_{\tau\rightarrow\mu}\left( \left(\frac{\mu^2}{\tau^2} -1\right)I_{\tau,\lambda} \right), \\
        & \geq -C\lim_{\tau\rightarrow\mu}\left( \left|\frac{\mu^2}{\tau^2} -1\right|\lambda^{\frac{2}{3p-4}}\tau^{\frac{4p-4}{3p-4}}\right), \\
        & =0,
    \end{align*}
    where we used the lowerbound for $I_{\tau,\lambda}$ given in \eqref{ineq:lowerbound of I_mu,lambda}. On the other hand, by a similar argument, we have 
    \begin{align*}
        \lim_{\tau\rightarrow\mu}\left( I_{\tau,\lambda} - I_{\mu,\lambda}\right) 
        & \geq  \lim_{\tau\rightarrow\mu}\left( \left(\frac{\tau^2}{\mu^2} -1\right)I_{\mu,\lambda} - C\frac{1}{p \lambda}\left| 1 - \frac{\tau^{p-2}}{\mu^{p-2}} \right|\lambda^{\frac{3p-2}{3p^2-4p}}\mu \tau^{\frac{4p-4}{3p^2-4p}-1} \right), \\
        & = \lim_{\tau\rightarrow\mu}\left( \left(\frac{\tau^2}{\mu^2} -1\right)I_{\mu,\lambda} \right), \\
        & =0.
    \end{align*}
    Hence, 
    \begin{align*}
        \lim_{\tau\rightarrow\mu}\left( I_{\tau,\lambda} - I_{\mu,\lambda}\right) =0,
    \end{align*}
    and the continuity is proved.
    
\end{proof}

\begin{prop}[General structure of minimizers]
\label{prop:general structure}
Let $p>1$, $(\mu,\lambda)$ satisfy \eqref{ineq:negativity of I_mu}. Each minimizer $\omega \in S_{\mu,\lambda}$ satisfies
\begin{equation}
\label{eqn:general structure of minimizer}
\omega^{p-1} = \lambda(\psi -W x_{2})_+, \qquad 
\psi (x) = \int_{\Omega}G(x,y)\omega (y)\mathrm{d}y,
\end{equation}
for some constants $W \in \R$, uniquely determined by $\omega$.
\end{prop}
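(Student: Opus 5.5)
Fix a minimizer $\omega\in S_{\mu,\lambda}$ with stream function $\psi$. The plan is to write down the Euler--Lagrange equation for $-E_{p,\lambda}$ on $K_\mu$, handling the moment constraint with a Lagrange multiplier $W$ and the sign constraint $\omega\ge0$ with one-sided variations. We use two families of admissible perturbations that keep $\int x_2\omega$ fixed. Since $I_{\mu,\lambda}<0$ under \eqref{ineq:negativity of I_mu}, we have $\omega\not\equiv0$, and for $p>4/3$ the bounds of Proposition \ref{prop:estimate for kinetic energy} give $\psi\in L^\infty$. For general $p>1$ we will instead use \eqref{ineq:estimate of stream fct} locally; the variations below only need $\psi$ integrable against compactly supported bounded functions, which that estimate provides.

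\textbf{Step 1: first variation.} Take bounded, compactly supported $g,h\ge0$ with $\int x_2 g=\int x_2 h=1$. We may first use a truncation $\omega\mathbf{1}_{\{\omega\ge\delta\}}$ restricted to a large ball. Set
\[
\omega_\eps=\omega+\eps\bigl(g-\tfrac{\int x_2 g}{\int x_2 h}\,h\bigr)
\]
with $h$ supported where $\omega\ge\delta$, so that $\omega_\eps\ge0$ for small $|\eps|$ and $\omega_\eps\in K_\mu$. The map $\eps\mapsto E[\omega_\eps]$ is a quadratic polynomial with derivative $\int\psi(\cdot)$. The map $\eps\mapsto\frac1p\|\omega_\eps\|_p^p$ is differentiable with derivative $\int\omega^{p-1}(\cdot)$; this follows by dominated convergence, since $\omega\ge\delta$ on $\spt h$ and $g$ is bounded with compact support. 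Minimality then gives the following.
\begin{itemize}
\item For $g$ also supported in $\{\omega\ge\delta\}$, where two-sided variation is allowed:
\[
\frac{\int(\psi-\lambda^{-1}\omega^{p-1})g}{\int x_2 g}=\frac{\int(\psi-\lambda^{-1}\omega^{p-1})h}{\int x_2 h}.
\]
\item For arbitrary $g\ge0$, where only $\eps>0$ is allowed: ``$\le$'' in place of equality.
\end{itemize}

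\textbf{Step 2: identifying $W$.} Localizing $g,h$ to shrinking balls via Lebesgue points, the first condition says that $(\psi-\lambda^{-1}\omega^{p-1})/x_2$ equals a constant $W$ a.e. on $\{\omega>0\}$, after letting $\delta\to0$ and exhausting by balls. The second condition says that $\psi-\lambda^{-1}\omega^{p-1}\le Wx_2$ a.e. on $\Omega$; on $\{\omega=0\}$ this reads $\psi\le Wx_2$. Together these give $\lambda^{-1}\omega^{p-1}=\psi-Wx_2$ on $\{\omega>0\}$ and $\psi-Wx_2\le0$ on $\{\omega=0\}$, which is exactly $\omega^{p-1}=\lambda(\psi-Wx_2)_+$.

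\textbf{Uniqueness and the main obstacle.} $W$ is uniquely determined because $\{\omega>0\}$ has positive measure, and on it $W=(\psi-\lambda^{-1}\omega^{p-1})/x_2$ is forced. The main technical obstacle is justifying differentiability of the $L^p$ term and the localization when $1<p<2$ and $\omega$ may be unbounded or only in $L^p$. This is why we restrict the perturbations to sets where $\omega\ge\delta$ is bounded below, and also to sets where $\omega\le M$ if needed. We then pass $\delta\to0$ and $M\to\infty$ using that $\{\omega>0\}=\bigcup\{\delta\le\omega\le M\}$ up to null sets.
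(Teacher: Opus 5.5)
Your proposal is correct and takes essentially the paper's route: a one-sided first variation, with a compensating perturbation supported in $\{\omega\ge\delta\}$ to preserve $\int x_2\omega=\mu$. This produces the multiplier $W$, gives equality on $\{\omega>\delta\}$ and $\psi\le Wx_2$ on $\{\omega=0\}$, and then one sends $\delta\to0$. Two small remarks: your uniqueness argument (reading off $Wx_2=\psi-\lambda^{-1}\omega^{p-1}$ on the positive-measure set $\{\omega>0\}$) is more direct than the paper's test with $\pm h_1$. Also, the $L^p$ term needs no truncation $\omega\le M$, since for bounded compactly supported $\eta$ the dominating function $(\omega+|\eta|)^{p-1}|\eta|$ is integrable.
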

\noindent\textit{Proof: } The proof follows from a standard argument, e.g., \cite{friedman1982variational, friedman1981vortex} for vortex rings, and is a direct consequence of Proposition 2.5 in \cite{abe2022stability} or Proposition 2.6 in \cite{Dong2026StabilityOV} but without the limitation on $L^1$-norm.

Take an arbitrary minimizer $\omega \in S_{\mu,\lambda}$. Since $I_{\mu,\lambda}< 0$ by \eqref{ineq:negativity of I_mu}, the minimizer $\omega\not\equiv 0$. There exists a $\delta_{0} > 0$ such that $Vol(\{x\in \R^2_+\mid \omega \geq \delta_{0}\}) > 0$. Fix a compactly supported $h_{1}\in L^{\infty}(\Omega)$ such that $\spt h_{1}\subset \{\omega \geq \delta_{0}\}$ and
\begin{align*}
    \int_{\Omega}h_{1}(x)\mathrm{d}x = 0, && \int_{\Omega}x_{2}h_{1}(x)\mathrm{d}x = 1.
\end{align*}

For any $\delta \in (0, \delta_{0})$ and compactly supported $h \in L^{\infty}(\Omega)$ such that $h \geq 0$ on $\{0 \leq \omega \leq \delta \}$, define
\begin{align*}
    \eta = h - \left(\int_{\Omega}x_2h\mathrm{d}x\right)h_{1},
\end{align*}
so that $\int x_{2}\eta \mathrm{d} x = 0$. For sufficiently small $\eps>0$, $\omega+\epsilon\eta\geq \delta-\epsilon\|\eta\|_{\infty}\geq 0$ on $\{\omega > \delta \}$. Since $\eta = h\geq 0$ on $\{0\leq \omega\leq \delta\}$, we also have $\omega+\epsilon\eta\geq 0 $ on $\{0\leq \omega\leq \delta\}$. Hence $\omega+\epsilon\eta\in K_\mu$ for small $\eps>0$. Since $\omega$ is a minimizer of \eqref{def:minimizing problem}, then
\begin{align*}
    0\geq & \frac{d}{d\eps}\Bigg|_{\eps=0}E_{p,\lambda}[\omega+\eps\eta] = \int_{\Omega}\left(\psi-\frac{1}{\lambda}\omega^{p-1}\right)\eta = \int_{\Omega}\left(\psi-\frac{1}{\lambda}\omega^{p-1}\right)\left(h - \left(\int_{\Omega}x_2h\mathrm{d}x\right)h_{1}\right), \\
    = & \int_{\Omega}\left(\psi-Wx_2-\frac{1}{\lambda}\omega^{p-1}\right)h = \int_{\omega>\delta}+\int_{0< \omega \leq \delta}\left( \Psi-\frac{1}{\lambda}\omega^{p-1} \right)h,
\end{align*}
where we use $\Psi=\psi-W x_2$ and
\begin{align*}
    W=\int_{\Omega}\left(\psi-\frac{1}{\lambda}\omega^{p-1}\right)h_1 dx.
\end{align*}

Since $h \in L^{\infty}(\Omega)$ is an arbitrary function satisfying $h \geq 0$ on $\{0 \leq \omega \leq \delta\}$, then 
\begin{align}
    \left\{\begin{aligned}
        \Psi - \frac{1}{\lambda}\omega^{p-1} & = 0 \quad \text{on } \{\omega > \delta\}, \\
        \Psi -  \frac{1}{\lambda}\omega^{p-1} & \leq 0 \quad \text{on } \{ 0 \leq \omega \leq  \delta\}. 
    \end{aligned}\right.
    \label{eqn:Psi and omega with para delta without L1}
\end{align}

Since $\delta>0$ is arbitrary, sending $\delta \rightarrow 0$ implies
\begin{align}
    \left\{\begin{aligned}
    \Psi - \frac{1}{\lambda}\omega^{p-1} & = 0 \quad \text{on } \{\omega > 0\}, \\
    \Psi & \leq 0 \quad \text{on } \{ \omega =0\}.
    \end{aligned}\right.
    \label{eqn:Psi and omega passing delta to 0 without L1}
\end{align}

Thus $\omega^{p-1} = \lambda(\psi -W x_{2})_+$. To show the uniqueness, suppose there exists some $W_*,\gamma_*$, such that $\omega = \lambda(\psi - W_* x_{2})_+$, then $\Psi_*=\psi- W_* x_{2}$ satisfies \eqref{eqn:Psi and omega with para delta without L1} for $\delta\in (0,\delta_0).$ Hence 
\begin{align*}
    0\geq \int(\Psi_*-\frac{1}{\lambda}\omega) h dx = \int_{\Omega}\left(\psi-W_* x_2-\frac{1}{\lambda}\omega^{p-1}\right)h,
\end{align*}
for compactly supported $h \in L^{\infty}(\Omega)$ such that $h \geq 0$ on $\{0 \leq \omega \leq \delta \}$. By taking $h=\pm h_1$, we have 
\begin{align*}
    W_*=\int_{\Omega}\left(\psi-\frac{1}{\lambda}\omega^{p-1}\right)h_1=W.
\end{align*}
The uniqueness is obtained. 

\rightline{$\Box$}

\section{Studies on $\mathbb{R}^2_+$ and $\S$} \label{sec:study of half plane and strip}

This section focuses on the General Convergence Theorem on $\R^2_+$ and $\S$. To achieve this goal, we first discuss the ranges of $\mu$ and $\lambda$ indicated by \eqref{ineq:negativity of I_mu} such that the lower bound of $\lambda$ is independent of the choice of $\mu$. This ensures the validity of Proposition \ref{prop:strictly subadditivity} and consequently yields the first two cases of Theorem \ref{thm:general convergence theorem without L1}. Moreover, based on the established ranges of $p,\mu,$ and $\lambda$, we present further properties of minimizers, such as Proposition \ref{prop:regularity study} and \ref{prop:compactly support of minimizer}. It is worth noting that the parameter range identified in Section \ref{sec:parameter analysis on half plane and strip} is same as the range stated in Case \ref{case:general convergence on half-plane} and \ref{case:general convergence on strips} of Theorem \ref{thm:general convergence theorem without L1}.

\subsection{Discussion about parameters $\mu$ and $\lambda$} \label{sec:parameter analysis on half plane and strip}

\begin{cor}\label{cor:range of mu and lambda Omega=R^_+}
    Suppose $p>4/3$ that and $\O=\R^2_+$, the range of parameters $(\mu,\lambda)$ given by \eqref{ineq:negativity of I_mu} is $\R_+ \times \R_+$.
\end{cor}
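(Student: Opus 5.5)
The claim is that for every $\mu>0$ and $\lambda>0$ we can find a set $K\subseteq\R^2_+$ with $|K|<\infty$ and $\lambda>\lambda_{K,\mu}$. By \eqref{ineq:negativity of I_mu} the admissible set is the union over $K$ of $(0,\infty)\times(\lambda_{K,\mu},\infty)$. So it suffices to show $\inf_K \lambda_{K,\mu}=0$ for each fixed $\mu>0$. The tool is the scaling invariance of $G_H$: we have $G_H(rx,ry)=G_H(x,y)$ for $r>0$, which is immediate from \eqref{def:green's function for half plane}.

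Fix a reference set, say the unit disk $B=B((0,2),1)\subset\R^2_+$, and take $K_r=rB$. A change of variables gives
\begin{align*}
\tt{Vol}(K_r)=r^2\tt{Vol}(B),\qquad \int_{K_r}x_2\,dx=r^3\int_B x_2\,dx,\qquad \int_{K_r}\int_{K_r}G_H\,dx\,dy=r^4\int_B\int_B G_H\,dx\,dy.
\end{align*}
Substituting into the formula for $\lambda_{K,\mu}$ yields
\begin{align*}
\lambda_{K_r,\mu}=c_B\,\mu^{p-2}\,r^{2-4+3(2-p)}=c_B\,\mu^{p-2}\,r^{4-3p}.
\end{align*}
Here $c_B\in(0,\infty)$, since $G_H>0$ is locally integrable on $B\times B$ (it has only a logarithmic singularity) and $\int_B x_2\,dx>0$.

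Since $p>4/3$, the exponent $4-3p$ is negative, so $\lambda_{K_r,\mu}\to0$ as $r\to\infty$. Hence for any $\lambda>0$ we can choose $r$ large enough that $\lambda>\lambda_{K_r,\mu}$. This places $(\mu,\lambda)$ in the admissible set, and therefore the range is all of $\R_+\times\R_+$.

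The only delicate point is keeping track of the exponent bookkeeping, in particular the power $2-p$ applied to the first moment. Note also that $p=4/3$ is exactly the threshold at which the exponent $4-3p$ vanishes and this argument stops working, which matches the paper's critical value.
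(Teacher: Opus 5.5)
Your proof is correct, and it reaches the conclusion by a different and lighter route than the paper.

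The paper fixes the half-disk $K=B(0,R)\cap\R^2_+$ and evaluates $\int_K\int_K G_H\,dx\,dy$ explicitly. It rewrites the integral as the logarithmic interaction energy of the odd extension $v=\sgn(x_2)\mathbf{1}_{B(0,R)}$, expands $\ln|x-y|$ in an angular Fourier series, and sums term by term to get $\frac{2R^4}{\pi}\sum_{m\ \mathrm{odd}}\frac{1}{m^3(m+2)}$. From this it reads off $\lambda_{K,\mu}\propto \mu^{p-2}R^{4-3p}$.

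You instead use that $G_H$ is invariant under the dilation $(x,y)\mapsto(rx,ry)$. The $r^4$ growth of the double integral, together with the factors $r^2$ and $r^3$ for the volume and first moment, is then pure dimensional analysis. All you need about the reference set $B$ is $0<\int_B\int_B G_H<\infty$, which follows from the logarithmic singularity.

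What the paper's computation buys is an explicit constant in front of $R^{4-3p}$, but that constant is never used afterward. Your argument avoids justifying the interchange of summation and integration. It also avoids the sign bookkeeping in expanding $\cos(m(\theta-\varphi))$, which should produce a sum, not a difference, of squared Fourier coefficients. It works for any reference set and makes transparent why $p=4/3$ is the threshold.

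It is the same dilation invariance the paper itself exploits later for strict subadditivity on $\R^2_+$. That invariance is unavailable on the strip, which is why the strip case instead relies on a direct lower bound for $\int_K\int_K G_S$.
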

\begin{proof}
    For any $R>0$, choose $K=B(0,R)\cap \R^2_+$ and define 
\begin{align*}
    J_{\R^2_+}=\int_{K}\int_{K}G_H(x,y)dxdy=\frac{1}{4\pi}\int_{K}\int_{K}\ln(1+\frac{4x_2 y_2}{|x-y|^2})dxdy.
\end{align*}
Then 
\begin{align}
    -2\pi J_{\R^2_+} = \frac{1}{2}\int_{\R^2_+}\int_{\R^2_+} v(x)v(y)\ln|x-y|dxdy, \label{eqn:write integral on K in terms of v}
\end{align}
where $v(x)=\sgn(x_2)\cdot \mathbf{1}_{B(0,R)}(x)$. In $B(0,R)$, the function $v$ can be written in a radial form,
\begin{align}
    v(\theta)=\sum_{n \tt{ is odd}}\frac{4}{n\pi}\sin(n\theta). \label{eqn:radial form of v}
\end{align}
Write $x=x(r,\theta)$ and $y=y(\rho,\varphi)$, and in almost everywhere sense, it is sufficient to consider $r\neq \rho$ inside the integral \eqref{eqn:write integral on K in terms of v}. Set 
\begin{align*}
    \alpha=\frac{\min\{r,\rho\}}{\max\{r,\rho\}}\in[0,1),
\end{align*}
then the Fourier series of $\ln|x-y|$ is
\begin{align*}
    \ln|x-y|=\ln(\max\{r,\rho\})-\sum_{m=1}^\infty \frac{\alpha^m}{m}\cos(m(\theta-\varphi)).
\end{align*}

We can now evaluate \eqref{eqn:write integral on K in terms of v}:
\begin{align*}
    -4\pi J_{\R^2_+} 
    & = \iint\limits_{[0,R]^2}\iint\limits_{[0,2\pi]^2}v(\theta)v(\varphi)\left( \ln(\max\{r,\rho\})-\sum_{m=1}^\infty \frac{\alpha^m}{m}\cos(m(\theta-\varphi)) \right)r\rho ~ d\theta d\varphi  dr d\rho.
\end{align*}
Since $v$ is a sine series independent of $r,\rho$, we have for any $r,\rho$, 
$$\int_{0}^{2\pi}\int_{0}^{2\pi} v(r,\theta)v(\rho,\varphi)\ln(\max\{r,\rho\}) d\theta d\varphi=0.$$
To exchange summation and integration, it is enough to compute each term one by one and then check the finiteness of the sum. Specifically,
\begin{align*}
    \sum_{m=1}^\infty &  \int_{0}^{R}\int_{0}^{R}\int_{0}^{2\pi}\int_{0}^{2\pi} v(\theta)v(\varphi) \frac{\alpha^m}{m}\cos(m(\theta-\varphi)) r\rho d\theta d\varphi dr d\rho , \\
    & = \sum_{m=1}^\infty \frac{1}{m} \left( \int_{0}^{R}\int_{0}^{R} r\rho \alpha^m dr d\rho \right) \left( \int_{0}^{2\pi}\int_{0}^{2\pi} v(\theta)v(\varphi)\cos(m(\theta-\varphi)) d\theta d\varphi \right) , \\
    & = \sum_{m=1}^\infty \frac{1}{m} \left( \frac{R^4}{2(m+2)} \right) \left( \left( \int_{0}^{2\pi}v(\theta)\cos(m \theta) d\theta \right)^2 - \left( \int_{0}^{2\pi}v(\theta)\sin(m \theta) d\theta \right)^2 \right).
\end{align*}
Recall the radial form of $v(\theta)$ and identities for $m,n\in \mathbb{Z}_{\geq 1}$,
\begin{align*}
    & \int_0^{2\pi}\cos(m\theta)\sin(n\theta)d\theta=0, 
    && \int_0^{2\pi}\sin(m\theta)\sin(n\theta)d\theta= 
    \left\{ \begin{aligned}
        & 0, && m\neq n, \\
        & \pi, && m=n.
    \end{aligned} \right.
\end{align*}
Thus 
\begin{align*}
    \sum_{m \tt{ is odd}}^\infty &  \int_{0}^{R}\int_{0}^{R}\int_{0}^{2\pi}\int_{0}^{2\pi} v(\theta)v(\varphi) \frac{\alpha^m}{m}\cos(m(\theta-\varphi))  r\rho \, d\theta d\varphi dr d\rho , \\
    & = -\sum_{m \tt{ is odd}} \frac{1}{m} \frac{R^4}{2(m+2)} \frac{16}{m^2} = -8 R^4 \sum_{m \tt{ is odd}} \frac{1}{m^3(m+2)} < \infty.
\end{align*}

In conclusion, 
\begin{align*}
    J_{\R^2_+} 
    & = \frac{2 R^4 }{\pi}\sum_{m \tt{ is odd}} \frac{1}{m^3(m+2)}.
\end{align*}
Consequently, the constant $\lambda_{K,\mu}$ can be written as
\begin{align}
    \lambda_{K,\mu}=\frac{\mu^{p-2}}{2\cdot 3^{2-p}p \sum_{m \tt{ is odd}}^\infty \frac{1}{m^3(m+2)}}R^{4-3p}. \label{eqn:reduced lambda_{mu,K} for half plane}
\end{align}
Letting $R\to \infty$, we derived the range of $(\mu,\lambda)$ described by \eqref{ineq:negativity of I_mu} is $\R_+\times \R_+$.
    
\end{proof}

Unlike the half‑plane, the study on the strip is constrained by its geometric features, and a lower bound for $\lambda$ independent of $\mu$ can only be obtained when $p\ge 2$. When $p>2$ this lower bound is $0$, and when $p=2$ it is a constant depending on the strip width $L$.

\begin{cor}\label{cor:range of mu and lambda Omega=S}
    Suppose that $p > 2$ and $\O=\S$, the range of $(\mu,\lambda)$ given by \eqref{ineq:negativity of I_mu} is $\R_+ \times \R_+$. When $p=2$, this range contains $\R_+ \times (\frac{2\pi^4}{L^2},\infty)$.
\end{cor}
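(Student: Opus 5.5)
The plan is to mimic the proof of Corollary \ref{cor:range of mu and lambda Omega=R^_+}. For each $\mu>0$ we exhibit a family of sets $K\subseteq\S$ of finite volume for which $\lambda_{K,\mu}$ can be made arbitrarily small when $p>2$, or at most $2\pi^4/L^2$ when $p=2$. The natural family exploits horizontal extent, since vertical extent is capped by $L$. We take rectangles $K_R=(-R,R)\times(0,L)$ and compute the three quantities in $\lambda_{K,\mu}$:
\begin{align*}
\tt{Vol}(K_R)=2RL, \qquad \int_{K_R}x_2\,dx=RL^2 .
\end{align*}
The main work is a lower bound for $J_R=\int_{K_R}\int_{K_R}G_S(x,y)\,dx\,dy$.

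For $J_R$ we would not use the logarithmic formula \eqref{def:green's function for strip} directly. Instead we expand $G_S$ in the vertical eigenfunctions $\sin(k\pi x_2/L)$. For $-\Delta$ with Dirichlet conditions on $\S$,
\begin{align*}
G_S(x,y)=\sum_{k\ge1}\frac{2}{L}\sin\Big(\frac{k\pi x_2}{L}\Big)\sin\Big(\frac{k\pi y_2}{L}\Big)\frac{L}{2k\pi}e^{-\frac{k\pi}{L}|x_1-y_1|}.
\end{align*}
Integrating over $x_2,y_2\in(0,L)$ leaves only odd $k$, and each such term contributes $(2L/(k\pi))^2$. The horizontal double integral over $(-R,R)^2$ of $e^{-a|s-t|}$ equals $4R/a-2(1-e^{-2aR})/a^2$, where $a=k\pi/L$. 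Since every term is nonnegative, we may keep only $k=1$, which gives
\begin{align*}
J_R\ \ge\ \frac{1}{\pi}\cdot\frac{4L^2}{\pi^2}\Big(\frac{4RL}{\pi}-\frac{2L^2}{\pi^2}\Big)\ \ge\ c\,RL^3
\end{align*}
for $R\ge L$, with an explicit constant $c$ (the full sum over odd $k$ can also be used, up to a factor of about $\sum_{k \text{ odd}} k^{-3}$). The essential point is linear growth in $R$, matching the volume. The precise normalisation of $G_S$ relative to \eqref{def:green's function for strip} (the $1/(2\pi)$ versus $1/(4\pi)$ convention) must be checked to track constants.

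Inserting these into $\lambda_{K,\mu}$ gives
\begin{align*}
\lambda_{K_R,\mu}=\frac{2\mu^{p-2}\cdot 2RL}{p\,J_R}\,(RL^2)^{2-p}\ \lesssim\ \mu^{p-2}L^{2-2p}R^{2-p}.
\end{align*}
For $p>2$ this tends to $0$ as $R\to\infty$, for every fixed $\mu>0$. Hence every $(\mu,\lambda)\in\R_+\times\R_+$ lies in some interval $(\lambda_{K_R,\mu},\infty)$, which proves the first claim.

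For $p=2$ the $\mu$- and $R$-dependence both disappear: $\lambda_{K_R,\mu}=2\cdot 2RL/(2J_R)$, and as $R\to\infty$ the limit is an $L$-dependent constant. The main obstacle is bookkeeping the constant to confirm it is at most $2\pi^4/L^2$. With $J_R\sim (16/\pi^3)RL^3\sum_{k \text{ odd}}k^{-3}$ under the paper's normalisation, the limit should be of order $\pi^3/L^2$, which lies below $2\pi^4/L^2$. Any slack lets us keep only the $k=1$ term and a crude bound, so the claimed inclusion $\R_+\times(2\pi^4/L^2,\infty)$ follows.
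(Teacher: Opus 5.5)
Your proof is correct and shares the paper's skeleton: the same rectangles $K_R=(-R,R)\times(0,L)$ with $\mathrm{Vol}(K_R)=2RL$ and $\int_{K_R}x_2\,dx=RL^2$, a lower bound $J_R\gtrsim RL^3$, and hence $\lambda_{K_R,\mu}\lesssim\mu^{p-2}L^{2-2p}R^{2-p}$. The one real difference is how you bound $J_R$ from below.

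The paper works pointwise with the closed form of $G_S$. It uses $\sinh^2+\sin^2\le\cosh^2\le e^{\frac{\pi}{L}|x_1-y_1|}$ and $\ln(1+t)\ge t/2$ on $[0,1]$ to get $G_S\ge\frac{1}{8\pi}\sin(\frac{\pi}{L}x_2)\sin(\frac{\pi}{L}y_2)e^{-\frac{\pi}{L}|x_1-y_1|}$. This gives $J_R\ge L^3R/\pi^4$ for $R\ge L/\pi$. At $p=2$ it yields exactly $\lambda_{K_R,\mu}\le 2\pi^4/L^2$, which is where the threshold in the statement comes from.

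Your sine expansion isolates the same integrand with constant $\frac{1}{\pi}$ instead of $\frac{1}{8\pi}$. Its normalisation does agree with the paper's $G_S$: both are the Dirichlet Green's function of $-\Delta$. So at $p=2$ the $k=1$ term alone already gives $\limsup_{R\to\infty}\lambda_{K_R,\mu}\le\pi^4/(8L^2)$.

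The price is that your bound is not pointwise. The $k=1$ mode is not a pointwise lower bound for $G_S$, because the $k=2$ mode takes negative values. You therefore genuinely need term-by-term integration before discarding modes. This is justified by Tonelli, since $\sum_{k\ge1}\frac{1}{k\pi}e^{-k\pi|s|/L}=-\frac{1}{\pi}\ln\bigl(1-e^{-\pi|s|/L}\bigr)$ is locally integrable in $s$, but you should state it.

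There is one bookkeeping slip, harmless because you fall back on the $k=1$ bound. The leading contribution of an odd mode is $\frac{1}{k\pi}\cdot\frac{4L^2}{k^2\pi^2}\cdot\frac{4RL}{k\pi}=\frac{16RL^3}{\pi^4k^4}$, not $\frac{16RL^3}{\pi^3k^3}$. With $\sum_{k\ \mathrm{odd}}k^{-4}=\pi^4/96$ this gives $J_R=\frac{RL^3}{6}+O(L^4)$. This matches $\int_0^L\frac{x_2(L-x_2)}{2}\,dx_2=\frac{L^3}{12}$ per unit length. So for $p=2$ one has $\lambda_{K_R,\mu}\to 12/L^2$, and rectangles actually show the range contains $\mathbb{R}_+\times(12/L^2,\infty)$, considerably more than the stated $\mathbb{R}_+\times(2\pi^4/L^2,\infty)$.
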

\begin{proof}
    For any $R>L/\pi$, choose $K=(-R, R) \times (0, L)$. Then 
    \begin{align*}
        \int_{K}\int_{K} G_S(x,y)dxdy & \geq \frac{1}{4\pi }\int_{K}\int_{K}\ln\left(1+\sin(\frac{\pi}{L}x_2)\sin(\frac{\pi}{L}y_2)\exp(-\frac{\pi}{L}|x_1-y_1|)\right)dxdy, \\
        & \geq \frac{1}{8\pi}\left(\int_{0}^{L}\sin(\frac{\pi}{L}x_2)dx_2\right)^2\int_{-R}^{R}\int_{-R}^{R}\exp(-\frac{\pi}{L}|x_1-y_1|)dx_1dy_1, \\
        & = \frac{1}{8\pi }\left(\frac{2L}{\pi}\right)^2 \left( \frac{2\,L^2 \,{\left(\exp({-\frac{2R\pi}{L}}) -1\right)}}{\pi^2 }+\frac{4\,L\,R}{\pi } \right), \\
        & \geq \frac{1}{8\pi }\left(\frac{2L}{\pi}\right)^2 \left( -\frac{2 L^2}{\pi^2 }+\frac{4\,L\,R}{\pi } \right), \\
        & = \frac{L^4}{\pi^4} \left( -\frac{1}{\pi} + \frac{2R}{L} \right) \geq \frac{L^3 R}{\pi^4}.
    \end{align*}
    Thus the constant $\lambda_{K,\mu}$ admits the upper bound 
    \begin{align*}
        \lambda_{K,\mu}\leq \frac{4\mu^{p-2}LR}{p}\frac{\pi^4}{L^3 R}\left(R L^2\right)^{2-p} = \frac{4\pi^4}{p}\mu^{p-2}L^{2-2p}R^{2-p}.
    \end{align*}
    Plug in $p=2$, we prove the particular case. As for $p>2$, letting $R\to \infty$, shows that the range of $(\mu,\lambda)$ in \eqref{ineq:negativity of I_mu} is $\R_+\times \R_+$.
    
\end{proof}

\subsection{Further structural analysis} \label{sec:further structure analysis on half plane and strip}

We now investigate further properties of the minimizers $\omega\in S_{\mu,\lambda}$ under the parameter regimes for $p,\mu,\lambda$ discussed in Section \ref{sec:parameter analysis on half plane and strip}. Proposition \ref{prop:positive W} establishes the strict positivity of the constant $W$ in the structural analysis \eqref{eqn:general structure of minimizer}, while Proposition \ref{prop:compactly support of minimizer} shows that every minimizer has compact support. In the proof of Proposition \ref{prop:positive W}, we first show $W\ge 0$ via decay estimates, and then prove $W>0$ through separate arguments for $\mathbb{R}^2_+$ and strips.

\begin{prop}\label{prop:positive W}
    Suppose either \ref{case:general convergence on half-plane} or \ref{case:general convergence on strips} holds.
    For any $\omega\in S_{\mu,\lambda}$, the constant $W=W(\omega)$ given by the structural analysis \eqref{eqn:general structure of minimizer} is strictly positive.
\end{prop}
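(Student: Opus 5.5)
The argument has two parts. First we show $W\ge 0$ by looking at the stream function far away from the support. Then we show $W\neq 0$ by testing the minimality of $\omega$ against a suitable competitor. Throughout, $\omega\in S_{\mu,\lambda}$ is fixed and $\psi$, $W$ are as in Proposition \ref{prop:general structure}.

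\textbf{Step 1: $W\ge 0$.} Suppose $W<0$. Then $\psi - Wx_2 \ge -Wx_2 = |W|x_2>0$ everywhere, since $\psi\ge 0$. By \eqref{eqn:general structure of minimizer} this gives
\[
\omega^{p-1}\ge \lambda |W| x_2 \quad \text{on all of } \Omega .
\]
We then split by domain.
\begin{itemize}
\item On $\R^2_+$, $\omega^{p-1}\ge \lambda|W|x_2$ on the whole half plane. This contradicts $\omega\in L^p$, and in fact it already contradicts $\int x_2\omega=\mu<\infty$.
\item On $\S$, the bound holds on the infinite strip, where $x_2$ is bounded below on $\R\times(L/4,3L/4)$. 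This again contradicts $\omega\in L^p(\S)$.
\end{itemize}
Hence $W\ge 0$.

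\textbf{Step 2: $W\ne 0$.} Assume $W=0$, so that $\omega^{p-1}=\lambda\psi$ with $\psi>0$ throughout $\Omega$ (positivity of the Green's function and $\omega\not\equiv0$). Then $\omega>0$ everywhere, and $\omega=(\lambda\psi)^{1/(p-1)}$.
\begin{itemize}
\item \emph{Strip.} We use the decay of $\psi$ in $x_1$. From the explicit kernel \eqref{def:green's function for strip}, $G_S(x,y)\gtrsim \sin(\pi x_2/L)\sin(\pi y_2/L)e^{-\pi|x_1-y_1|/L}$. Restricting $y$ to a fixed box where $\omega$ has positive mass gives $\psi(x)\ge c\sin(\pi x_2/L)e^{-\pi|x_1|/L}$. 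This lower bound is integrable, so it gives no contradiction by itself. Instead we take the identity $\int(\psi\omega-\tfrac1\lambda\omega^{p})=0$ and vary along the dilation $\omega\mapsto \omega(\cdot/s)$ horizontally. The first-moment constraint requires rescaling in amplitude, and the first variation of $E_{p,\lambda}$ along this family must vanish with multiplier $W=0$. Computing it yields a Pohozaev-type identity. Combined with $\int\psi\omega=\tfrac1\lambda\int\omega^p$ (from multiplying \eqref{eqn:general structure of minimizer} by $\omega$), this should force $E[\omega]$ and $\|\omega\|_p^p$ into a relation incompatible with $I_{\mu,\lambda}<0$.
\item \emph{Half plane.} The same strategy is cleaner, using the exact scaling $\omega_s(x)=s^{a}\omega(x/s)$ with $a=-3$ to keep $\int x_2\omega_s=\mu$. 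Here $E[\omega_s]=s^{2a+4}E[\omega]$ and $\|\omega_s\|_p^p=s^{ap+2}\|\omega\|_p^p$. Stationarity at $s=1$ gives
\[
(2a+4)E=\frac{ap+2}{p\lambda}\|\omega\|_p^p .
\]
Separately, testing \eqref{eqn:general structure of minimizer} with $\omega$ gives
\[
2E-W\mu=\frac1\lambda\|\omega\|_p^p .
\]
With $W=0$ these two relations give $-2E=\frac{2-3p}{p\lambda}\|\omega\|_p^p$ and $2E=\frac1\lambda\|\omega\|_p^p$. Hence $-\frac{1}{\lambda}=\frac{2-3p}{p\lambda}$, i.e.\ $p=2-3p$, which fails for $p>4/3$. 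So $W\neq0$, and actually the two identities solve to $W\mu=\frac{4-3p}{p\lambda}\|\omega\|_p^p\cdot(-1)$-type positive quantity, giving $W>0$ directly.
\item \emph{Strip, continued.} Only horizontal dilations $x_1\mapsto sx_1$ preserve $\S$, and amplitude is fixed by the moment constraint. The energy no longer scales homogeneously, so we instead differentiate $E$ in $s$ at $s=1$, obtaining $\tfrac{d}{ds}E=\int\!\!\int x_1\partial_{x_1}\ldots$. The goal is to show this derivative equals $E$ minus a positive term, which is what forces $W>0$.
\end{itemize}

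\textbf{Main obstacle.} The hard part is this Pohozaev computation on $\S$: controlling the sign of the non-homogeneous derivative of $G_S$ under horizontal dilation, and justifying differentiability given only the decay of $\omega$. If that fails, the fallback is a competitor argument. Since $W=0$ forces $\omega>0$ on all of $\S$, $\omega$ has infinite horizontal extent. Truncating to $|x_1|<R$ and renormalizing the moment should then strictly lower $-E_{p,\lambda}$ for large $R$, using the exponential tails. That contradicts minimality.
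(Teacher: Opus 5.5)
\textbf{Verdict.} Your Step 1 is correct, and simpler than the paper's limit along $(n,L)$: since $\psi\ge 0$, $W<0$ forces $\omega^{p-1}\ge\lambda|W|x_2$ on a set of infinite measure. Your half-plane argument is also valid once the algebra is fixed. The impulse-preserving scaling $s^{-3}\omega(\cdot/s)$ is a more elementary substitute for the paper's divergence-identity Pohozaev computation, and the two identities give $W\mu=\frac{2(p-1)}{p\lambda}\|\omega\|_p^p>0$. (With $W=0$ they force $-p=2-3p$, i.e.\ $p=1$, not $p=2-3p$.)

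The genuine gap is the strip, which is the case that needs a new idea, and there nothing is actually proved. You say the horizontal-dilation identity ``should force'' a contradiction. You set as your goal that the derivative is ``$E$ minus a positive term''. You then offer a truncation fallback with no mechanism. If $W=0$, then $\psi-\frac1\lambda\omega^{p-1}\equiv 0$ on $\S$, so the first variation of $E_{p,\lambda}$ vanishes in every direction. Truncating and renormalising therefore changes $-E_{p,\lambda}$ only at second order, and you give no reason for the sign of that change.

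The missing step is that you do not need any kernel-level sign control of $\partial_{x_1}G_S$. Take $\omega_s(x)=s^{-1}\omega(x_1/s,x_2)\in K_\mu$. Use that $G_S$ is symmetric and depends only on $x_1-y_1$, then write $\omega=-\Delta\psi$ and integrate by parts; the boundary terms vanish because $\partial_1\psi=0$ on $\partial\S$. This gives
\[
\frac{d}{ds}E[\omega_s]\Big|_{s=1}=\int_\S x_1\partial_1\psi\,\omega\,dx=\frac12\int_\S(\partial_1\psi)^2dx-\frac12\int_\S(\partial_2\psi)^2dx=\int_\S(\partial_1\psi)^2dx-E[\omega].
\]
This is the horizontal Pohozaev identity. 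The paper obtains it, for $W=0$, from the $x_1$-independent flux
\[
\int_0^L\Big[-(\partial_1\psi)^2+(\partial_2\psi)^2-\tfrac{2(p-1)}{p}\lambda^{1/(p-1)}\psi^{p/(p-1)}\Big]\,dx_2,
\]
which must vanish by integrability. It then combines this with $\int|\nabla\psi|^2=\lambda^{1/(p-1)}\int\psi^{p/(p-1)}$ and $p>2$.

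In your variational form, stationarity in $s$ gives $E[\omega]=\int_\S(\partial_1\psi)^2dx+\frac{p-1}{p\lambda}\|\omega\|_p^p$. Together with $2E[\omega]-W\mu=\frac1\lambda\|\omega\|_p^p$, this yields $W\mu=\int_\S(\partial_1\psi)^2dx-I_{\mu,\lambda}>0$. That would even cover $p=2$, where the paper only cites \cite{Dong2026StabilityOV}.

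So the usable form is ``$-E$ plus the nonnegative term $\int(\partial_1\psi)^2$''. Your ``$E$ minus a positive term'', namely $E-\int(\partial_2\psi)^2$, gives no lower bound on $E$. It is this anisotropic term, not a relation between $E$ and $\|\omega\|_p^p$ alone, that carries the sign. You would still have to justify the differentiation in $s$ and the $x_1$-weighted integration by parts, for instance with a cutoff in $x_1$, since compact support of minimizers is only established after $W>0$ is known.
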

\begin{proof}
    First we prove that $W\ge0$. For any $\omega\in S_{\mu,\lambda}$, take a sequence $\{\omega_n\}\subseteq C_c^\infty(\O)$ such that $x_2\omega_n\to x_2\omega$ in $L^1(\O)$ and $\omega_n\to \omega$ in $L^p(\O)$, then we have 
    \begin{align*}
        0\leq \psi(x)= \int_{\O}G(x,y)\left( \omega(y)-\omega_n(y) \right)dy + \int_{\O}G(x,y)\omega_n(y) dy,
    \end{align*}
    by \eqref{ineq:sup estimate of stream fct} and the comparison \eqref{ineq:comparison between green's function},
    \begin{align*}
        0\leq \psi(x)\leq C \|x_2(\omega - \omega_n)\|_1^{\frac{2p-2}{3p-2}}\|\omega-\omega_n\|_p^{\frac{p}{3p-2}} + \frac{x_2\|y_2\omega_n\|_1}{\pi \inf_{y\in \spt\omega_n}|x-y|^2} ,
    \end{align*}
    Sending $|x|\to \infty$ and then $n\to \infty$ implies $\psi\to 0 $ as $|x| \to \infty$. 
    
    Choose a sequence $y_n=(n,L)\in \O $, by \eqref{eqn:Psi and omega passing delta to 0 without L1} and $\psi, \omega \to 0$ as $|x|\to \infty$,
    \begin{align*}
        \limsup_{n\to \infty} \left( \psi(n,L)-WL - \frac{1}{\lambda}\omega^{p-1}(n,L) \right) \leq 0.
    \end{align*}
    which force $WL \geq 0$, and then $W\geq 0$.

    \underline{Case $\Omega=\R^2_+$.} Recall the following identity for arbitrary functions
    \begin{align*}
        \nabla \cdot \left( \nabla\psi(x\cdot\nabla\psi) - \frac{1}{2}x\left|\nabla\psi\right|^2\right)=\Delta\psi(x\cdot\nabla\psi).
    \end{align*}
    For a maximizer $\omega\in S_{\mu,\lambda}$ to \eqref{def:minimizing problem} and $\psi$ defined in \eqref{def:stream function}, the function $\Psi=\psi-Wx_2$ satisfies 
    $$-\Delta \Psi = \omega=\lambda^{\frac{1}{p-1}}\Psi_+^{\frac{1}{p-1}}.$$
    For convenience, let $\gamma=\frac{1}{p-1}$. Then
    \begin{align*}
        \Delta\psi\,(x\cdot\nabla\psi) &= -\lambda^{\gamma} \Psi_+^{\gamma} (x\cdot\nabla\Psi) - x_2 W \omega \\
        &= -\frac{\lambda^\gamma}{\gamma+1}\, x\cdot \nabla (\Psi_+^{\gamma+1}) - x_2 W \omega \\
        &= -\frac{\lambda^\gamma}{\gamma+1}\,\nabla\cdot(x \Psi_+^{\gamma+1}) + \frac{2\lambda^\gamma}{\gamma+1} \Psi_+^{\gamma+1} - x_2 W \omega,\\
        &= -\frac{1}{\gamma+1} \nabla\cdot (x\,\Psi\omega) + \frac{2}{\gamma+1}\psi\omega - \frac{\gamma+3}{\gamma+1} W x_2\omega.
    \end{align*}
    Thus
    \begin{align*}
        \nabla \cdot \Big( \nabla\psi\,(x\cdot\nabla\psi) - \frac12 x |\nabla\psi|^2 + \frac{1}{\gamma+1} (x\,\Psi\omega) \Big) = \frac{2}{\gamma+1}\psi\omega - \frac{\gamma+3}{\gamma+1} W x_2\omega.
    \end{align*}
    The normal trace of the vector field on the right-hand side on $\p \R^2_+$ vanishes by $\psi\equiv 0 $. By using $\nabla\psi\in L^2(\R^2_+)$ and $\psi\omega\in L^1(\R^2_+)$, the integral of the left-hand side vanishes. Thus,
    \begin{align*}
        E[\omega]=\frac{\gamma+3}{4}W\mu.
    \end{align*}
    Meanwhile, multiplying \eqref{eqn:general structure of minimizer} by $\omega$ and integration on $\R^2_+$,
    \begin{align*}
        \|\omega\|_p^p=\lambda\left( \int_{\spt\omega}\psi\omega-W \int_{\spt\omega}x_2\omega \right)=\lambda\left( 2 E[\omega]-W\mu \right)=\frac{\gamma+1}{2}\lambda W\mu,
    \end{align*}
    then
    \begin{align*}
        I_{\mu,\lambda}=-E[\omega]+\frac{1}{p\lambda}\|\omega\|_p^p=\left(-\frac{\gamma+3}{4}+\frac{\gamma+1}{2p}\right)W\mu = - \frac{3p-4}{4(p-1)}W\mu,
    \end{align*}
    and
    \begin{align*}
        W=\frac{4(p-1)}{(4-3p)\mu} I_{\mu,\lambda} >0.
    \end{align*}

    \underline{Case $\Omega=\S$ and $p>2$.} We argue by contradiction. Suppose $W=0$, we have $\omega^{p-1}=\lambda \psi$. So that $\psi$ satisfies
    \begin{align*}
        \left\{\begin{aligned}
            -\Delta\psi & =\lambda^{\frac{1}{p-1}}\psi^{\frac{1}{p-1}} && \tt{ in } \S, \\
            \psi & =0 && \tt{ on } \p\S, \\
            \psi & \in L^{\frac{p}{p-1}}(\S).
        \end{aligned}\right.
    \end{align*}
    where the last constraint comes from $\omega\in L^p(\S)$. Multiplying the equation by $\psi$ and integrating by parts yields,
    \begin{align}
        \int_{\S}(\p_1^2 \psi)^2+ (\p_2^2 \psi)^2 dx = \int_{\S}|\nabla \psi|^2 dx = \lambda^{\frac{1}{p-1}}\int_{\S}\psi^{\frac{p}{p-1}} dx. \label{eqn:identity 1 in proof W>0 on S}
    \end{align}

    Next, multiply the equation by $\partial_1\psi$ and integration in $x_2$,
    \begin{align*}
        - \int_{0}^{L} \p_1\psi ( \p_1^2 \psi + \p_2^2\psi ) dx_2 = \int_{0}^{L}  \psi^{\frac{1}{p-1}} \p_1 \psi dx_2.
    \end{align*}
    Integration by part in $x_2$, the identity can be written as:
    \begin{align*}
        \frac{1}{2}\frac{d}{dx_1} \int_{0}^{L} - (\p_1\psi)^2 + (\p_2\psi)^2 dx_2 & = \int_{0}^{L} -\p_1\psi \p_1^2 \psi + \p_2\psi \p_1\p_2 \psi dx_2, \\ 
        & =\left[ \p_1 \psi \p_2 \psi \right]_{x_2=0}^{x_2=L} -\int_{0}^{L} \p_1\psi \p_1^2 \psi + \p_1\psi \p^2_2 \psi dx_2, \\
        & =0+ \lambda^{\frac{1}{p-1}}\int_{0}^{L}  \psi^{\frac{1}{p-1}} \p_1 \psi dx_2, \\
        & =\frac{\lambda^{\frac{1}{p-1}}(p-1)}{p}\frac{d}{dx_1} \int_{0}^{L}  \psi^{\frac{p}{p-1}} dx_2.
    \end{align*}
    Therefore there exists a constant $F$ independent from $x_1$ such that
    \begin{align*}
        \int_{0}^{L} - (\p_1\psi)^2 + (\p_2\psi)^2 -\frac{2\lambda^{\frac{1}{p-1}}(p-1)}{p}\psi^{\frac{p}{p-1}} dx_2 = F.
    \end{align*}
    Integrating over $\mathbb{R}$ gives
    \begin{align*}
        \int_{\S} - (\p_1\psi)^2 dx + \int_{\S} (\p_2\psi)^2  dx-\frac{2\lambda^{\frac{1}{p-1}}(p-1)}{p}\int_{\S} \psi^{\frac{p}{p-1}} dx = \int_{\R}F dx_1.
    \end{align*}
    The left hand side is finite by \eqref{eqn:identity 1 in proof W>0 on S} and $\psi \in L^{\frac{p}{p-1}}(\S)$. So that $\int_{\R}F dx_1<+\infty$, hence $F=0$. We thus obtain
    \begin{align}
        \int_{\S} - (\p_1\psi)^2 dx + \int_{\S} (\p_2\psi)^2  dx = \frac{2\lambda^{\frac{1}{p-1}}(p-1)}{p}\int_{\S} \psi^{\frac{p}{p-1}} dx. \label{eqn:identity 2 in proof W>0 on S}
    \end{align}
    Observe that \eqref{eqn:identity 1 in proof W>0 on S} + \eqref{eqn:identity 2 in proof W>0 on S} implies
    \begin{align*}
        \int_{\S}(\p_2\psi)^2 dx = \lambda^{\frac{1}{p-1}}\left( \frac{p-1}{p} +\frac{1}{2} \right) \int_{\S} \psi^{\frac{p}{p-1}} dx.
    \end{align*}
    Recall the assumption $p>2$ and identity \eqref{eqn:identity 1 in proof W>0 on S}
    \begin{align*}
        0 \leq \int_{\S}(\p_1\psi)^2 dx = \lambda^{\frac{1}{p-1}}\left( 1 - \frac{p-1}{p} - \frac{1}{2} \right) \int_{\S} \psi^{\frac{p}{p-1}} dx = \lambda^{\frac{1}{p-1}}\left( \frac{1}{p}-\frac{1}{2} \right) \int_{\S} \psi^{\frac{p}{p-1}} dx \leq 0,
    \end{align*}
    which implies  
    \begin{align*}
        \int_{\S} \psi^{\frac{p}{p-1}} dx =0.
    \end{align*}
    Hence $\psi\equiv0$ and $\omega=0$, contradicting $I_{\mu,\lambda}<0$.
    
    \underline{Case $\Omega=\S$ and $p=2$.} The proof is identical to that of Corollary 4.3 in \cite{Dong2026StabilityOV} and is omitted here.
    
\end{proof}


\begin{remark}
    Since every minimizer $\omega \in S_{\mu,\lambda}$ satisfies \eqref{eqn:general structure of minimizer} with $W>0$, it obeys the same elliptic equation as the kinetic energy maximizers studied in \cite{abe2025existence}. Consequently, all structural results obtained in \cite{abe2025existence} for those maximizers obtained through the structure of the equation can be carry over directly to the minimizers of the penalized energy.
\end{remark}


For $\alpha \in (0, 1)$ and a positive integer $m$, let $C^{m,\alpha}(\O)$ be the space of functions $f$ such that $\partial^l_x f$ is uniformly bounded and continuous for $|l| \le m-1$, and $\partial^l_x f$ is $\alpha$-Hölder continuous for $|l| = m$. We denote by $\lceil s \rceil$ the smallest integer greater than or equal to $s \in \mathbb{R}$.

\begin{prop}\label{prop:regularity study}
    Suppose either \ref{case:general convergence on half-plane} or \ref{case:general convergence on strips} holds.
    Then, for any $\omega \in S_{\mu,\lambda}$, $\omega \in L^\infty(\O)\cap C^{m,\alpha}(\O)$ and $\psi = \int_{\O}G\omega \in L^\infty(\O) \cap W^{2,q}_{\mathrm{loc}}(\O)$ for all $q \in (1, \infty)$, where 
    \begin{align}
        m = \left\lceil \frac{1}{p-1} - 1 \right\rceil, \quad \alpha = \frac{1}{p-1} - m \in (0, 1]. \label{eqn:regularity of minimizers}
    \end{align}
    In particular, $\psi \in C^{1,\beta}(\O)$ for all $\beta \in (0, 1)$.
\end{prop}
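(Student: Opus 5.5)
The plan is a bootstrap built on the Euler–Lagrange equation \eqref{eqn:general structure of minimizer}, namely $\omega=\lambda^{\gamma}(\psi-Wx_2)_+^{\gamma}$ with $\gamma=\frac{1}{p-1}$ and $W>0$ by Proposition \ref{prop:positive W}.

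\textbf{Step 1: $\psi$ bounded.} By \eqref{ineq:sup estimate of stream fct} we have
\[
\|\psi\|_\infty\le C\mu^{\frac{2p-2}{3p-2}}\|\omega\|_p^{\frac{p}{3p-2}}<\infty,
\]
so $\psi\in L^\infty(\O)$.

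\textbf{Step 2: compact support and $\omega$ bounded.} Since $W>0$, the set $\{\psi>Wx_2\}$ lies in the slab $\{x_2<\|\psi\|_\infty/W\}$. Therefore
\[
0\le \omega\le \lambda^\gamma\|\psi\|_\infty^\gamma ,
\]
and $\omega\in L^\infty(\O)$. Combined with $\omega\in L^p$, this gives $\omega\in L^r(\O)$ for every $r\in[p,\infty]$. It also gives $\omega\in L^r$ for $r\in(1,p)$ on bounded sets, which is all that local estimates need.

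\textbf{Step 3: $W^{2,q}_{\rm loc}$ and $C^{1,\beta}$.} The function $\psi$ solves $-\Delta\psi=\omega$ with $\psi=0$ on $\p\O$, and $\omega\in L^\infty\cap L^p$. Local Calderón--Zygmund estimates give $\psi\in W^{2,q}_{\rm loc}(\O)$ for all $q<\infty$. Because $\p\R^2_+$ and $\p\S$ are flat, the same estimates hold up to the boundary via odd reflection. Choosing $q>2$ and applying Morrey's inequality then gives $\psi\in C^{1,\beta}$ for every $\beta<1$. These bounds are uniform on unit balls, since the local constants depend only on $\|\omega\|_\infty$ and $\|\psi\|_\infty$; this yields the global $C^{1,\beta}(\O)$ bound.

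\textbf{Step 4: regularity of $\omega$.} Write $\Psi=\psi-Wx_2\in C^{1,\beta}$. The map $s\mapsto s_+^\gamma$ has the following regularity:
\begin{itemize}
\item if $\gamma\le1$ (i.e.\ $p\ge2$), it is $\gamma$-Hölder, so $\omega=\lambda^\gamma\Psi_+^\gamma\in C^{0,\gamma}$, which is the case $m=0$, $\alpha=\gamma$;
\item if $\gamma>1$, it is $C^{\lceil\gamma-1\rceil,\,\gamma-\lceil\gamma-1\rceil}$.
\end{itemize}

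Composing with a $C^{k,\beta}$ function $\Psi$ with $k\ge m$ gives $\omega\in C^{m,\alpha}$ as in \eqref{eqn:regularity of minimizers}. To get enough smoothness of $\Psi$, I would bootstrap. If $\omega\in C^{j,\alpha'}$, then interior and flat-boundary Schauder estimates give $\psi\in C^{j+2,\alpha'}$. Hence $\Psi_+^\gamma$ gains derivatives until it saturates at the intrinsic regularity of $s_+^\gamma$ at $s=0$, namely $m$ derivatives plus $\alpha$-Hölder.

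\textbf{Main obstacle.} The delicate point is the Hölder regularity of $\Psi_+^\gamma$ across the free boundary $\{\Psi=0\}$ when $\gamma>1$. For this I would use the chain-rule estimate
\[
|a_+^\gamma-b_+^\gamma|\le C\,(|a|+|b|)^{\gamma-1}|a-b| ,
\]
together with its analogues for higher derivatives of the form $\gamma(\gamma-1)\cdots\Psi_+^{\gamma-k}(\nabla\Psi)^{\otimes k}$. These show that the $k$-th derivative is $(\gamma-k)$-Hölder for $k=m$, with uniform constants because $\Psi$ and its derivatives are bounded. The stated results of \cite{abe2025existence} for the same equation provide exactly this argument, so I would cite them for that step.
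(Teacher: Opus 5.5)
Your proposal is correct and follows essentially the same route as the paper. The paper's proof just invokes Lemma 4.1 of \cite{abe2025existence}, on the grounds that minimizers satisfy the same equation $\omega^{p-1}=\lambda(\psi-Wx_2)_+$ with $W>0$. Your bootstrap is exactly the argument behind that lemma: an $L^\infty$ bound on $\psi$ from \eqref{ineq:sup estimate of stream fct}, hence on $\omega$, then Calder\'on--Zygmund and Morrey, then Schauder, then the H\"older regularity of $s\mapsto s_+^{\gamma}$.

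Two small details are worth making explicit.
\begin{itemize}
\item Odd reflection in the Schauder step needs $\omega=0$ on $\partial\Omega$. This holds because $\psi=0$ there and $W\ge 0$.
\item Your statement that composing with a $C^{k,\beta}$ function $\Psi$ with $k\ge m$ gives $C^{m,\alpha}$ fails if $k=m$ and $\beta<\alpha$. You need $\Psi$ one derivative smoother than that. A single Schauder step supplies it, because $\gamma=\frac{1}{p-1}<3$ in the cases considered.
\end{itemize}
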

\begin{proof}
    The proof is same as the argument of Lemma 4.1 in \cite{abe2025existence}, since the structure of functions study there share a same structure of minimizers to the penalized energy given in \eqref{eqn:general structure of minimizer}.
    
\end{proof}

\begin{prop}\label{prop:compactly support of minimizer}
    Suppose either \ref{case:general convergence on half-plane} or \ref{case:general convergence on strips} holds. Then, for any $\omega \in S_{\mu,\lambda}$,  $\spt \omega = \overline{\{x \in \mathbb{R}^2_+ : \omega > 0\}}$ is compact.
\end{prop}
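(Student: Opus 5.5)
The plan is to use the structure $\omega^{p-1}=\lambda(\psi-Wx_2)_+$ from Proposition \ref{prop:general structure}, together with $W>0$ from Proposition \ref{prop:positive W}. Together they give
\begin{align*}
\spt\omega\subseteq\overline{\{x\in\O:\ \psi(x)\geq Wx_2\}}.
\end{align*}
It therefore suffices to show that the set $\{\psi> Wx_2\}$ is bounded. We control it in two directions. In the vertical direction we use the uniform bound on $\psi$. In the horizontal direction we use decay of $\psi$ at infinity, which was already established in the first step of the proof of Proposition \ref{prop:positive W}.

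\textbf{Vertical direction.} By \eqref{ineq:sup estimate of stream fct}, $\|\psi\|_\infty\leq M:=C\mu^{\frac{2p-2}{3p-2}}\|\omega\|_p^{\frac{p}{3p-2}}<\infty$. Hence $\psi>Wx_2$ forces $x_2<M/W$. For the strip this is automatic since $x_2<L$; for $\R^2_+$ it confines the support to the slab $\{0<x_2<M/W\}$.

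\textbf{Horizontal direction.} This is the main point. On the set $\{x_2\geq\eta\}$ for fixed $\eta>0$, we have $\psi\to0$ as $|x|\to\infty$, so $\psi<W\eta$ there for $|x_1|$ large. The obstacle is near the boundary $x_2\to0$: there both $\psi$ and $Wx_2$ vanish, so the comparison must be done at the linear scale. We need a bound of the form $\psi(x)\leq \epsilon(|x_1|)\,x_2$ with $\epsilon(r)\to0$ as $r\to\infty$, uniformly in $x_2\in(0,M/W)$.

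To prove this I would split $\omega=\omega\mathbf{1}_{B_R}+\omega\mathbf{1}_{B_R^c}$.
\begin{itemize}
\item \emph{Near part.} Using $G\leq G_H\leq \frac{x_2y_2}{\pi|x-y|^2}$, the contribution of $\omega\mathbf{1}_{B_R}$ is at most $\frac{x_2\mu}{\pi(|x_1|-R)^2}$, which is $o(x_2)$ as $|x_1|\to\infty$.
\item \emph{Far part.} I would use \eqref{ineq:estimate of stream fct}, which gives a bound with factor $x_2^{\frac{p-1}{2p-1}}$ rather than $x_2$; this sublinear power is not enough near the boundary. Instead I would use interior gradient bounds. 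By Proposition \ref{prop:regularity study}, $\psi\in C^{1,\beta}$, and $\psi=0$ on $\{x_2=0\}$, so $\psi(x)\leq x_2\sup_{0<s<x_2}|\p_2\psi(x_1,s)|$. It then suffices to show $\nabla\psi(x)\to0$ as $|x_1|\to\infty$ within the slab. For this, apply local boundary $W^{2,q}$/$C^{1,\beta}$ estimates on half-balls $B_2(x_1,0)\cap\O$: $\|\nabla\psi\|_{L^\infty(B_1)}\leq C(\|\psi\|_{L^\infty(B_2)}+\|\omega\|_{L^q(B_2)})$. Both terms tend to $0$ as $|x_1|\to\infty$: the first by the decay of $\psi$, the second because $\omega\in L^\infty\cap L^p$ forces $\|\omega\|_{L^q(B_2(x_1,0))}\to0$ (tails of an integrable function). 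For the strip, the same estimate applies near both boundary lines.
\end{itemize}

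\textbf{Conclusion.} With $\epsilon(|x_1|)<W$ for $|x_1|\geq R_0$, we get $\psi<Wx_2$ there, so $\omega=0$. Combined with the vertical bound, $\spt\omega\subseteq[-R_0,R_0]\times[0,M/W]$, which is compact.

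The hardest step is the boundary-layer comparison $\psi=o(x_2)$ near $\p\O$. I would first try the gradient-decay route above. If the boundary regularity constants become problematic in the $L^p$ setting, the fallback is a barrier argument. Since $-\Delta\Psi=\omega$ and $\omega$ is small far away, compare $\Psi$ with a harmonic-plus-correction supersolution on far half-discs.
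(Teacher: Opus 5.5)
Your argument is correct, but it is organized differently from the paper's.

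The paper does not separate the vertical and horizontal directions. It writes $\{\omega>0\}=\{\psi/x_2>W\}$ and observes that $\psi/x_2=\int_0^1\p_2\psi(x_1,sx_2)\,ds$ is bounded and uniformly continuous on $\O$, which comes from the global $C^{1,\beta}$ bound of Proposition \ref{prop:regularity study}. It also notes that $\psi/x_2\in L^2(\O)$ by Hardy's inequality, since $\nabla\psi\in L^2$. A uniformly continuous $L^2$ function on $\R^2_+$ or $\S$ must tend to $0$ at infinity. Since $W>0$, the superlevel set $\{\psi/x_2>W\}$ is therefore bounded.

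You instead bound $x_2$ via $\|\psi\|_\infty$ and handle $\{x_2\geq\eta\}$ by the pointwise decay of $\psi$. For the boundary layer you use local boundary $W^{2,q}$ estimates on half-balls, feeding in the decay of $\psi$ and the smallness of $\|\omega\|_{L^q(B_2(x_1,0))}$, which follows from $\omega\in L^\infty\cap L^p$. This works. It does require elliptic estimates localized up to the flat boundary. For a thin strip, use balls of radius below $L/2$ so that only one boundary line is seen.

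The paper obtains the same key fact, $\psi=o(x_2)$ at infinity, from Hardy plus uniform continuity, with no local estimates. In exchange, your route gives the more explicit information that $\nabla\psi\to0$ in the boundary layer. Also note that once you prove gradient decay for the full $\psi$, your near/far splitting of $\omega$ becomes unnecessary.
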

\begin{proof}
    By \eqref{eqn:general structure of minimizer}, $\{x \in \mathbb{R}^2_+ : \omega > 0\} = \{x \in \mathbb{R}^2_+ : \psi(x)/x_2 > W \}$. By Proposition \ref{prop:regularity study}, the function 
    \begin{align*}
        \frac{\psi(x_1, x_2)}{x_2} = \int_0^1 \partial_{x_2} \psi(x_1, x_2 s)\,ds
    \end{align*}
    is bounded and uniformly continuous in $\O$. Since $\psi/x_2 \in L^2(\O)$ by $\nabla \psi \in L^2(\O)$ and the Hardy's inequality, $\psi/x_2 \to 0$ as $|x| \to \infty$, the assertion follows.
    
\end{proof}

\subsection{Strictly Subadditivity of $I_{\mu,\lambda}$}

The goal of this subsection is to prove the strictly subadditivity of $I_{\mu,\lambda}$ in $\mu$, i.e. 

\begin{prop}\label{prop:strictly subadditivity}
    Suppose either \ref{case:general convergence on half-plane} or \ref{case:general convergence on strips} holds. Then for any $0<\alpha<\mu$,
    \begin{align}
        I_{\mu,\lambda}<I_{\mu-\alpha,\lambda}+I_{\alpha,\lambda}. \label{ineq:strictly subadditivity}
    \end{align}
\end{prop}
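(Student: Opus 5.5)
The plan treats the half-plane and the strip separately. For $\R^2_+$ the inequality follows from an exact homogeneity of $I_{\mu,\lambda}$ in $\mu$. For $\S$ it follows from placing two compactly supported minimizers far apart and using the strictly positive interaction energy.

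\emph{Case $\O=\R^2_+$.} Use the rescaling $\omega_{a,b}(x)=a\,\omega(bx)$ with $a,b>0$, which preserves $\R^2_+$, nonnegativity and membership in $L^p$. A change of variables gives
\begin{align*}
\int x_2\omega_{a,b}=ab^{-3}\int x_2\omega, \quad E[\omega_{a,b}]=a^2b^{-4}E[\omega], \quad \|\omega_{a,b}\|_p^p=a^pb^{-2}\|\omega\|_p^p.
\end{align*}
Choose $b^2=a^{2-p}$, so that $E$ and the penalty carry the same factor $a^{2p-2}$; the moment factor is then $a^{(3p-4)/2}$. Since this map is a bijection $K_\mu\to K_{t\mu}$ with $t=a^{(3p-4)/2}$, we get
\begin{align*}
I_{\mu,\lambda}=\mu^{\theta}I_{1,\lambda}, \qquad \theta=\frac{4p-4}{3p-4}>1 \quad (p>4/3).
\end{align*}
By Corollary \ref{cor:range of mu and lambda Omega=R^_+} and \eqref{ineq:negativity of I_mu}, $I_{1,\lambda}<0$. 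Strict convexity of $s\mapsto s^\theta$ gives $\alpha^\theta+(\mu-\alpha)^\theta<\mu^\theta$, and multiplying by $I_{1,\lambda}<0$ yields \eqref{ineq:strictly subadditivity}.

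\emph{Case $\O=\S$.} No scaling is available, so I would argue in three steps.

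\emph{Step 1 (minimizers exist).} For every $\nu>0$ in the admissible range of Corollary \ref{cor:range of mu and lambda Omega=S}, I show $S_{\nu,\lambda}\neq\emptyset$. Take a minimizing sequence and replace each $\omega_n$ by its Steiner symmetrization in $x_1$. This preserves $\int x_2\omega_n$ and $\|\omega_n\|_p$, and does not decrease $E$ (Riesz-type rearrangement inequality for $G_S$, which is symmetric decreasing in $|x_1-y_1|$). The sequence is bounded in $L^p$ by Remark \ref{remark:uniformly bounded}, so pass to a weak limit $\omega$ with moment $\nu'\le\nu$. Monotonicity in $|x_1|$ gives the pointwise tail control
\begin{align*}
\int_0^L x_2\omega_n(x_1,x_2)\,dx_2\le \frac{\nu}{2|x_1|}.
\end{align*}
This control, together with the crossing estimate \eqref{ineq:estimate of energy crossing term}, is what I would use to show $E[\omega_n]-E[\omega]-E[\omega_n-\omega]\to0$ and that the far-field remainder carries asymptotically vanishing energy. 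Hence $E[\omega_n]\to E[\omega]$, and weak lower semicontinuity of $\|\cdot\|_p$ gives $-E_{p,\lambda}[\omega]\le I_{\nu,\lambda}$. If $\nu'=\nu$, $\omega$ is a minimizer.

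\emph{Step 2 (excluding moment loss).} If $\nu'<\nu$, then $I_{\nu',\lambda}\le -E_{p,\lambda}[\omega]\le I_{\nu,\lambda}$. In the opposite direction, put a near-minimizer for $\nu'$ and a compactly supported element of $K_{\nu-\nu'}$ with negative functional value (it exists since $I_{\nu-\nu',\lambda}<0$) far apart horizontally. The positive interaction only helps, and the cross $L^p$ terms vanish once the supports are disjoint. This gives $I_{\nu,\lambda}\le I_{\nu',\lambda}+I_{\nu-\nu',\lambda}<I_{\nu',\lambda}$, a contradiction. So $\nu'=\nu$.

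\emph{Step 3 (strict subadditivity).} Take minimizers $\omega_1\in S_{\alpha,\lambda}$ and $\omega_2\in S_{\mu-\alpha,\lambda}$. Both are compactly supported by Proposition \ref{prop:compactly support of minimizer}. Translate $\omega_2$ horizontally (translation invariance of $\S$) so that the supports are disjoint, and set $\omega=\omega_1+\omega_2(\cdot-de_1)\in K_\mu$. Disjointness gives $\|\omega\|_p^p=\|\omega_1\|_p^p+\|\omega_2\|_p^p$. Since $G_S>0$ in the interior, the interaction is strictly positive, so
\begin{align*}
I_{\mu,\lambda}\le -E_{p,\lambda}[\omega]= I_{\alpha,\lambda}+I_{\mu-\alpha,\lambda}-\int\!\!\int G_S\,\omega_1\,\omega_2(\cdot-de_1)<I_{\alpha,\lambda}+I_{\mu-\alpha,\lambda}.
\end{align*}

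\emph{Expected obstacle.} The hardest step is the energy continuity in Step 1. Specifically, one must show that the part of $\omega_n$ that spreads out horizontally contributes vanishing energy, using only the $1/|x_1|$ tail bound and the bound \eqref{ineq:estimate of energy crossing term}. I would first split $\S$ into $\{|x_1|\le R\}$ and its complement. Inside, I would use local compactness of the potential operator. Outside, I would rely on the exponential decay of $G_S$ in $|x_1-y_1|$ together with the smallness of the far moment, controlling the near-diagonal part by Proposition \ref{prop:estimate for kinetic energy}.
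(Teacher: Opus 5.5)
Your half-plane argument is the paper's scaling argument. For $\S$ you keep the paper's skeleton: Steiner symmetrization, existence of minimizers, compact support via Proposition~\ref{prop:compactly support of minimizer}, then translating two minimizers apart. Your proof is correct but differs from the paper's in two places.

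The main difference is how you rule out loss of moment. The paper minimizes over the relaxed class $\widehat K_\mu$. If the relaxed minimizer had moment $\mu_0<\mu$, free-moment perturbations would force $W=0$ in $\omega^{p-1}=\lambda(\psi-Wx_2)_+$, contradicting the uniqueness and positivity of $W$ from Proposition~\ref{prop:positive W}. Your Step~2 is purely variational: weak subadditivity combined with $I_{\nu-\nu',\lambda}<0$. The latter is available precisely because, on $\S$, the admissible $\lambda$-range of Corollary~\ref{cor:range of mu and lambda Omega=S} does not depend on $\mu$. Only one of the two pieces needs compact support; the $L^p$ cross term with the other vanishes as the translation $d\to\infty$. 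Your route avoids the Euler--Lagrange/Pohozaev input at this step. However, $W>0$ still enters in Step~3 through the compact support of minimizers, so the overall dependence on Proposition~\ref{prop:positive W} remains.

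The second difference is the energy-continuity mechanism. The paper controls $\psi$ at large $|x_1|$ through the local bound $\int_{|x_1-y_1|\le x_1/A}\omega^p\le\frac{2}{A}\|\omega\|_p^p$, and uses $p\ge2$ to replace $x_2^{2-2/p}$ by $x_2$. You instead use the marginal bound $m_n(x_1)=\int_0^L x_2\omega_n\,dx_2\le\nu/(2|x_1|)$ together with the exponential decay of $G_S$.

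One caution on that step. The far moment $\int_{|x_1|>R}x_2\omega_n$ is \emph{not} small; that is exactly the scenario your Step~2 addresses. So applying Proposition~\ref{prop:estimate for kinetic energy} globally to the far part gives no smallness. Instead, apply \eqref{ineq:estimate of energy crossing term} on unit windows $S_k=\{k\le x_1<k+1\}$ against their neighbours, and sum. With $M_k=\|x_2\omega_n\mathbf 1_{S_k}\|_1\lesssim\nu/|k|$ and $P_k=\|\omega_n\mathbf 1_{S_k}\|_p$, H\"older gives
\begin{align*}
\sum_{|k|\ge R}M_k^{\frac{4p-4}{3p-2}}P_k^{\frac{2p}{3p-2}}\le\Big(\sum_{|k|\ge R}M_k^{\frac{4p-4}{3p-4}}\Big)^{\frac{3p-4}{3p-2}}\|\omega_n\|_p^{\frac{2p}{3p-2}}\lesssim\Big(\frac{\nu}{R}\Big)^{\frac{p}{3p-2}}\nu^{\frac{3p-4}{3p-2}}\|\omega_n\|_p^{\frac{2p}{3p-2}}.
\end{align*}
For $|x_1-y_1|\ge1$, the bound $G_S\lesssim x_2y_2e^{-\pi|x_1-y_1|/L}$ gives a contribution $\lesssim\nu^2\big(R^{-1}+e^{-\pi R/(2L)}\big)$. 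With this localization the argument closes, and this step does not even need $p\ge2$, unlike the paper's Proposition~\ref{prop:strip concentration of kinetic energy}.
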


To achieve this goal, it is necessary to discuss separately according to cases \ref{case:general convergence on half-plane} and \ref{case:general convergence on strips}. Suppose \ref{case:general convergence on half-plane} holds, i.e., $\O=\R^2_+$, we use a scaling argument to reduce $I_{\mu,\lambda}$ to $I_{1,\lambda}$ and complete the proof based on its coefficients. Suppose \ref{case:general convergence on strips} holds, i.e., $\O=\S$, the situation is more involved. We first recall the Steiner symmetrization of $\omega$ and the non-decreasing of the penalized energy under such symmetrization. Then show that the kinetic energy of any $\omega$ invariant under Steiner symmetrization is concentrated in a bounded domain. Based on this, we prove the existence of a minimizer for \eqref{def:minimizing problem}, thereby completing the proof of strict subadditivity.

\subsubsection{Strictly Subadditivity for $\O=\R^2_+$}\hfill

\noindent\underline{\textit{Proof of the Proposition \ref{prop:strictly subadditivity} with \ref{case:general convergence on half-plane}:}}

Observe that for any $\tau>0$, $G(\tau x,\tau y)=G(x,y)$. By the scaling 
    \begin{align*}
        \overline\omega(x)=\mu^{-\frac{2}{3p-4}}\omega(\mu^{\frac{p-2}{3p-4}}x), \tt{ for any } \omega \in K_{\mu},
    \end{align*}
    the penalized energy transforms as
    \begin{align*}
        E_{p,\lambda}[\overline\omega]=\mu^{-\frac{4}{3p-4}}\mu^{-\frac{4p-8}{3p-4}}E[\omega]-\frac{1}{p\lambda}\mu^{-\frac{2p}{3p-4}}\mu^{-\frac{2p-4}{3p-4}}\|\omega\|_p^p=\mu^{-\frac{4p-4}{3p-4}}E_{p,\lambda}[\omega],
    \end{align*}
    the infimum satisfies
    \begin{align*}
        I_{\mu,\lambda}=\mu^{\frac{4p-4}{3p-4}}I_{1,\lambda},
    \end{align*}
    which directly conclude \eqref{ineq:strictly subadditivity}.

\rightline{$\Box$}

\subsubsection{Strictly Subadditivity for $\O=\S$}

\begin{prop}[Steiner symmetrization]\label{prop:steiner symmetrization}
For $\omega \ge 0$ satisfying $\omega \in L^p(\S)$ and $x_2\omega \in L^1(\S)$, there exists $\omega^* \ge 0$ such that
\begin{align}
    \begin{aligned}
        &\omega^*(x_1,x_2) = \omega^*(-x_1,x_2), \\
        &\omega^*(x_1,x_2) \text{ is non-increasing for } x_1 > 0.
    \end{aligned}\label{def:symmetry property}
\end{align}
Moreover,
\begin{align}
    \begin{aligned}
        \|\omega^*\|_p &= \|\omega\|_p, \quad \forall p\geq 1, \\
        \|x_2\omega^*\|_1 &= \|x_2\omega\|_1, \\
        E[\omega^*] &\ge E[\omega].
    \end{aligned}
\end{align}
\end{prop}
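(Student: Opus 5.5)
We take $\omega^*$ to be the Steiner symmetrization of $\omega$ in the $x_1$ variable, i.e. for a.e. fixed $x_2\in(0,L)$ let $\omega^*(\cdot,x_2)$ be the symmetric decreasing rearrangement on $\R$ of the one-variable function $\omega(\cdot,x_2)$. Equivalently, $\omega^*(x)=\int_0^\infty \mathbf{1}_{\{\omega>t\}^*}(x)\,dt$ by the layer-cake representation, where for a measurable set $A\subseteq\S$ the set $A^*$ replaces each horizontal slice $A\cap\{x_2=s\}$ by the centered interval of the same one-dimensional measure. Properties \eqref{def:symmetry property} are immediate from this construction. Since the rearrangement on each slice is equimeasurable, Fubini gives $\|\omega^*(\cdot,x_2)\|_{L^p(\R)}=\|\omega(\cdot,x_2)\|_{L^p(\R)}$ for a.e. $x_2$, and integrating in $x_2$ yields $\|\omega^*\|_p=\|\omega\|_p$. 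The same slice-wise argument with the weight $x_2$, which is constant on each slice, gives $\|x_2\omega^*\|_1=\|x_2\omega\|_1$.

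The main step is the energy inequality $E[\omega^*]\ge E[\omega]$. We write
\begin{align*}
E[\omega]=\frac12\int_0^L\int_0^L\left(\int_\R\int_\R K_{x_2,y_2}(x_1-y_1)\,\omega(x_1,x_2)\,\omega(y_1,y_2)\,dx_1dy_1\right)dx_2dy_2,
\end{align*}
where $K_{x_2,y_2}(s)=G_S((s,x_2),(0,y_2))$ by horizontal translation invariance of $G_S$ in \eqref{def:green's function for strip}. For fixed $x_2,y_2\in(0,L)$, the function
\begin{align*}
s\mapsto \frac{1}{4\pi}\ln\Big(1+\frac{a}{\sinh^2(\frac{\pi}{2L}s)+b}\Big), \qquad a=\sin(\tfrac{\pi}{L}x_2)\sin(\tfrac{\pi}{L}y_2)\ge0,\quad b=\sin^2(\tfrac{\pi}{2L}(x_2-y_2))\ge0,
\end{align*}
is nonnegative, even, and nonincreasing in $|s|$, since $\sinh^2$ is increasing in $|s|$ and $t\mapsto\ln(1+a/(t+b))$ is decreasing. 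Hence $K_{x_2,y_2}$ equals its own symmetric decreasing rearrangement. The one-dimensional Riesz rearrangement inequality then gives, for a.e. $(x_2,y_2)$,
\begin{align*}
\iint K_{x_2,y_2}(x_1-y_1)\,f(x_1)\,g(y_1)\,dx_1dy_1\le \iint K_{x_2,y_2}(x_1-y_1)\,f^*(x_1)\,g^*(y_1)\,dx_1dy_1,
\end{align*}
with $f=\omega(\cdot,x_2)$ and $g=\omega(\cdot,y_2)$. Integrating over $(x_2,y_2)$ gives the claim.

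The obstacle is technical: the Riesz inequality is usually stated for functions vanishing at infinity with finite integrals. A slice $\omega(\cdot,x_2)$ lies in $L^p(\R)$, so it vanishes at infinity in the rearrangement sense and $f^*$ is well defined. However, $K_{x_2,y_2}$ has a logarithmic singularity when $x_2=y_2$ and may not have finite integrals against $f\otimes g$ slice-wise. To handle this, we first truncate, replacing $\omega$ by $\min\{\omega,M\}\mathbf{1}_{\{|x_1|<M\}}$ and $K$ by $\min\{K,M\}$; truncating $K$ preserves its symmetric decreasing shape. Riesz applies to these truncated objects, and we then pass $M\to\infty$ by monotone convergence. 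The left side increases to $E[\omega]$, and the right side increases to $E[\omega^*]$, using that rearrangement commutes with truncation and is monotone. The finiteness of $E[\omega]$ and $E[\omega^*]$ is guaranteed by \eqref{ineq:estimate of kinetic energy}, using the preserved norms.
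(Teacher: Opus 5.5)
Your proposal is correct and is the same argument as the paper's: Steiner symmetrization slice by slice in $x_1$, equimeasurability for the two norms, and the one-dimensional Riesz rearrangement inequality, using that $G_S$ is symmetric decreasing in $|x_1-y_1|$ for fixed $x_2,y_2$. The paper only cites Fraenkel and Turkington for this, and your truncation step supplies details it omits. Note that the spatial cutoff $\mathbf{1}_{\{|x_1|<M\}}$ does not commute with rearrangement. There you need monotonicity of rearrangement together with $f_M\uparrow f\Rightarrow f_M^*\uparrow f^*$, and that is what makes the right-hand side increase to $E[\omega^*]$.
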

\begin{proof}
    Recall the structure of the Green's function for $\S$ \eqref{def:green's function for strip}. For any fixed $x_2,y_2$, $G$ is a strictly decreasing function with respect to $|x_1-y_1|$. Then the Riesz Rearrangement inequality in the $x_1$-direction can be applied, which directly implies this theorem. See \cite{Fraenkel1974}, Appendix I, and \cite{TurkingtonBruce1983Osvf}, p.1053.

\end{proof}

\begin{prop}\label{prop:decay in x1 estimate}
    Let $p>1$, $\omega\in L^p(\S)$ and $x_2 \omega \in L^1(\S)$. Suppose \eqref{def:symmetry property} holds for $\omega$, then for any $A\geq 2$, $x_2\leq \frac{|x_1|}{A}$, we have 
    \begin{align}
        \psi(x)\leq C\left( x_2^{2(1-\frac{1}{p})} A^{-\frac{1}{p}}\|\omega\|_p + x_2 \left( \frac{A}{|x_1|} \right)^2 \|x_2\omega\|_1 \right).\label{ineq:decay in x1 estimate}
    \end{align}
    The constant $C$ is independent of $\omega$ and $A$.
\end{prop}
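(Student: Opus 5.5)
The plan is to compare with the half-plane kernel and split the $y$-integral according to the distance from $x$. By \eqref{ineq:comparison between green's function} we have $\psi(x)\le\int_{\S}G_H(x,y)\omega(y)\,dy$. Fix $x$ with $A\ge2$ and $x_2\le|x_1|/A$, and set $\rho=|x_1|/A$, so that $\rho\ge x_2$ and $\rho\le|x_1|/2$. The domain of integration splits into a far region $\{|y-x|\ge\rho\}$ and a near region $\{|y-x|<\rho\}$.

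\emph{Far region.} From $\ln(1+s)\le s$ we get $G_H(x,y)\le \frac{x_2y_2}{\pi|x-y|^2}\le \frac{x_2y_2}{\pi\rho^2}$. Integrating against $\omega$ therefore gives at most $C\,x_2(A/|x_1|)^2\|x_2\omega\|_1$, which is exactly the second term of \eqref{ineq:decay in x1 estimate}. This part uses no symmetry.

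\emph{Near region and the role of \eqref{def:symmetry property}.} On $\{|y-x|<\rho\}$ we have $|y_1|\ge |x_1|-\rho\ge|x_1|/2$. So the near region lies in a horizontal window $W$ of length $2\rho$ sitting at horizontal distance at least $|x_1|/2$ from the axis. For every fixed $y_2$, the function $y_1\mapsto\omega(y_1,y_2)^p$ is even and non-increasing in $|y_1|$. Hence its integral over any interval of length $2\rho$ contained in $\{|y_1|\ge |x_1|/2\}$ is at most $\frac{2\rho}{|x_1|/2}$ times its integral over $[0,|x_1|/2]$. Integrating in $y_2$ gives
\begin{align*}
    \|\omega\|_{L^p(W)}\le C\Big(\frac{\rho}{|x_1|}\Big)^{1/p}\|\omega\|_p = C A^{-1/p}\|\omega\|_p .
\end{align*}
This is where the factor $A^{-1/p}$ comes from. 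More generally, the same argument gives $\|\omega\|_{L^p(W_r)}\le C(r/|x_1|)^{1/p}\|\omega\|_p$ for sub-windows $W_r$ of length $r\le\rho$.

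It remains to bound $\int_{|y-x|<\rho}G_H(x,y)\omega(y)\,dy$ by H\"older's inequality. On the inner ball $\{|y-x|<2x_2\}$, the scale invariance $G_H(\tau x,\tau y)=G_H(x,y)$ gives $\|G_H(x,\cdot)\|_{L^{p'}(B(x,2x_2))}=C x_2^{2/p'}=Cx_2^{2(1-1/p)}$. This is because $G_H(e_2,\cdot)$ has only a logarithmic singularity and is therefore in $L^{p'}_{loc}$. Combined with the window bound, this yields the first term of \eqref{ineq:decay in x1 estimate}.

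On the intermediate shells $2x_2\le|y-x|<\rho$, I would decompose dyadically, $|y-x|\sim r=2^kx_2$. On each shell I would use $G_H\le Cx_2y_2/r^2$, the constraint $0<y_2<L$, and the shell-window bound $\|\omega\|_{L^p(\text{shell})}\le C(r/|x_1|)^{1/p}\|\omega\|_p$. I would then sum the resulting geometric series in $k$. Whenever the sum is dominated by its largest shells, I would instead absorb that part into the far-region bound by using $y_2\omega$ and $\|x_2\omega\|_1$ rather than $\|\omega\|_p$.

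I expect this intermediate-shell bookkeeping to be the main obstacle. For $p<2$ the shell sum is controlled by its smallest shell, so it reduces to $x_2^{2(1-1/p)}A^{-1/p}\|\omega\|_p$. For $p\ge2$ the naive sum grows with $r$, and the correct exponents appear only after carefully interpolating between the $\|\omega\|_p$ bound and the $\|x_2\omega\|_1$ bound on each shell. The threshold between the two should be chosen shell by shell, with the constant kept independent of $A$ (possibly depending on $L$ and $p$, as allowed by the paper's convention on $C$).
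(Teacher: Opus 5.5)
\textbf{There is a genuine gap, and it is the step you flag yourself: the intermediate shells for $p\ge 2$.} For $p>2$ no bookkeeping can close it, because \eqref{ineq:decay in x1 estimate} is false in that range.

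\emph{Counterexample for $p>2$.} Take $L=1$ (any $L$ behaves the same way) and $\omega=\mathbf{1}_{[-2,2]\times(0,h)}$ with $h\le\frac12$; this $\omega$ satisfies \eqref{def:symmetry property}. Take $A=2$ and $x=(1,x_2)$ with $x_2\le h/2$. Use $\sin(\pi t)\ge 2t$ on $[0,\frac12]$, $\sinh(\frac{\pi}{2}s)\le \sinh(\frac{\pi}{2})\,s$ on $[0,1]$, and $\ln(1+u)\ge u/2$ for $u\le 1$. These give $G_S(x,y)\ge c\,x_2y_2|x-y|^{-2}$ on $\{h/2<y_2<h,\ h<|y_1-1|<1\}$. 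Integrating over that set yields $\psi(x)\ge c\,h\,x_2$ with $c>0$ absolute. The right-hand side of \eqref{ineq:decay in x1 estimate} equals $C\big(x_2^{2-2/p}(2h)^{1/p}+8h^2x_2\big)$. Dividing by $x_2$ and letting $x_2\to 0$ gives $c\,h\le 8Ch^2$, which fails for small $h$.

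The reason is structural. $\psi$ vanishes only linearly at $\p\S$ (here with slope of order $h$), whereas $x_2^{2(1-1/p)}=o(x_2)$ when $p>2$. Your proposed shell-by-shell interpolation between $\|\omega\|_p$ and $\|x_2\omega\|_1$ therefore cannot help. For $p>2$ the shell bounds increase with $r$, so they are dominated by $r\sim\rho$ and stay linear in $x_2$. For instance, for $\rho\le L$ your sub-window bounds sum to $x_2\rho^{1-2/p}A^{-1/p}\|\omega\|_p$, never to $x_2^{2-2/p}A^{-1/p}\|\omega\|_p$.

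\textbf{Comparison with the paper.} Apart from this, your decomposition is the paper's. The paper splits at horizontal distance $|x_1|/A$, uses $G\le G_H\le x_2y_2/(\pi|x-y|^2)$ in the far region, and uses the same Steiner window bound $\int_{|y_1-x_1|\le |x_1|/A}\omega^p\le \frac{2}{A}\|\omega\|_p^p$ in the near region. It then handles the whole near region with one H\"older step, asserting $\|G_H(x,\cdot)\|_{L^{p'}(\R^2_+)}\le Cx_2^{2(1-1/p)}$. Since $G_H(x,y)\sim x_2y_2/(\pi|y|^2)$ as $|y|\to\infty$, that norm is finite only when $p'>2$, i.e.\ $p<2$. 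So the paper's argument is complete exactly where your innermost shell dominates, and there the two arguments coincide.

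\textbf{The case $p=2$.} Here your own sub-window bound finishes the job with no interpolation. On $\{r\le |y-x|<2r\}$ one has $G_H\lesssim x_2/r$ and area $\lesssim r^2$, so $\|G_H(x,\cdot)\|_{L^2(\mathrm{shell})}\lesssim x_2$. Summing over dyadic $r\le\rho$,
\[
\sum_{r\le\rho} x_2\,(r/|x_1|)^{1/2}\|\omega\|_2\lesssim x_2(\rho/|x_1|)^{1/2}\|\omega\|_2=x_2A^{-1/2}\|\omega\|_2 .
\]
The paper's single global H\"older step loses a logarithm at this exponent.

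\textbf{What can be proved.} Your route, and in fact any route, establishes the estimate only for $1<p\le 2$. For $p>2$, the range on which Proposition \ref{prop:strip concentration of kinetic energy} and the strip case rely, the estimate has to be replaced by one that is linear in $x_2$.
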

\begin{proof}
We may assume that $x_1>0$, and 
\begin{align*}
    \psi(x) & =\int_{|x_1-y_1|\leq \frac{x_1}{A}} + \int_{|x_1-y_1|\geq \frac{x_1}{A}} G(x,y)\omega(y)dy \leq \int_{|x_1-y_1|\leq \frac{x_1}{A}} + \int_{|x_1-y_1|\geq \frac{x_1}{A}} G_H(x,y)\omega(y)dy, \\
    & \leq \left( \int_{\R^2_+} G_H(x,y)^{p'} dy \right)^\frac{1}{p'} \left( \int_{|x_1-y_1|\leq \frac{x_1}{A}} \omega(y)^{p} dy \right)^\frac{1}{p} + \frac{x_2}{\pi}\left( \frac{A}{|x_1|} \right)^2 \int_{|x_1-y_1|\geq \frac{x_1}{A}} y_2\omega(y)dy, \\
    & \leq Cx_2^{2(1-\frac{1}{p})} \left( \int_{|x_1-y_1|\leq \frac{x_1}{A}} \omega^{p}(y)dy \right)^\frac{1}{p} + \frac{x_2}{\pi}\left( \frac{A}{|x_1|} \right)^2 \|x_2\omega\|_1,
\end{align*}
where $\frac{1}{p}+\frac{1}{p'}=1$. Since \eqref{def:symmetry property} holds for $\omega$, then
\begin{align*}
    \omega(y)^p \leq \frac{1}{y_1}\int_0^{y_1} \omega(t,y_2)^p dt \leq \frac{1}{y_1}\int_0^{\infty} \omega(t,y_2)^p dt,
\end{align*}
so that
\begin{align*}
    \int_{|x_1-y_1| \leq \frac{x_1}{A}} \omega^{p}(y)dy 
    \leq & \int_{0}^L \int_{x_1(1-\frac{1}{A})}^{x_1(1+\frac{1}{A})} \frac{1}{y_1} \int_0^{\infty} \omega(t,y_2)^p dt dy_1 dy_2,\\
    \leq & \frac{4}{A} \int_{0}^L \int_0^{\infty} \omega(t,y_2)^p dt dy_2 = \frac{2}{A} \|\omega\|_p^p.
\end{align*}

Hence we have obtained \eqref{ineq:decay in x1 estimate}.
    
\end{proof}

The decay in $x_1$ estimate \eqref{ineq:decay in x1 estimate} will now be shown to imply that the kinetic energy $E[\omega]$ is concentrated on a bounded domain $Q=\{x \in \S \mid |x_1|\geq AR, x_2<R \}$.
\begin{prop}\label{prop:strip concentration of kinetic energy}
    Let $p\geq 2$, $\omega$ satisfy the assumption of Proposition \ref{prop:decay in x1 estimate},
    \begin{align}
        \int\limits_{S\setminus Q} \psi(x)\omega(x) dx\leq  C\left(R^{-\frac{p}{2p-1}}\|x_2\omega\|_1^{\frac{3p-2}{2p-1}}\|\omega\|_p^{\frac{p}{2p-1}} + A^{-\frac{1}{p}} R^{1-\frac{2}{p}} \|x_2\omega\|_1 \|\omega\|_p + R^{-2} \|x_2\omega\|_1^2 \right). \label{ineq:strip concentration of kinetic energy}
    \end{align}
    The constant $C$ is independent of $\omega$, $A$ and $R$.
\end{prop}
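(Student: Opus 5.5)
We read $Q$ as the bounded box $\{x\in\S : |x_1|< AR,\ x_2<R\}$, in line with the sentence preceding the statement. The plan is to split the complement $\S\setminus Q$ into two pieces, a high region and a far-out region:
\begin{align*}
    \S\setminus Q\subseteq \{x\in \S: x_2\geq R\}\ \cup\ \{x\in\S: |x_1|\geq AR,\ x_2<R\}.
\end{align*}
On the first piece we use the pointwise bound \eqref{ineq:estimate of stream fct} of Proposition \ref{prop:estimate for kinetic energy}. On the second we use the decay estimate \eqref{ineq:decay in x1 estimate} of Proposition \ref{prop:decay in x1 estimate}. Each of the three terms on the right of \eqref{ineq:strip concentration of kinetic energy} should come from exactly one of these bounds. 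As in Proposition \ref{prop:decay in x1 estimate}, we take $A\geq 2$.

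\textbf{High region $\{x_2\geq R\}$.} By \eqref{ineq:estimate of stream fct},
\begin{align*}
    \psi(x)\leq C x_2^{\frac{p-1}{2p-1}}\|x_2\omega\|_1^{\frac{p-1}{2p-1}}\|\omega\|_p^{\frac{p}{2p-1}}.
\end{align*}
We write $x_2^{\frac{p-1}{2p-1}}=x_2\cdot x_2^{-\frac{p}{2p-1}}$ and use $x_2^{-\frac{p}{2p-1}}\leq R^{-\frac{p}{2p-1}}$ on this set. Integrating against $\omega$ then gives
\begin{align*}
    \int_{x_2\geq R}\psi\omega\,dx\leq CR^{-\frac{p}{2p-1}}\|x_2\omega\|_1^{1+\frac{p-1}{2p-1}}\|\omega\|_p^{\frac{p}{2p-1}},
\end{align*}
and $1+\frac{p-1}{2p-1}=\frac{3p-2}{2p-1}$. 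This is the first term.

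\textbf{Far region $\{|x_1|\geq AR,\ x_2<R\}$.} Here $x_2<R\leq |x_1|/A$, so the hypothesis of Proposition \ref{prop:decay in x1 estimate} holds and we may apply \eqref{ineq:decay in x1 estimate}. Its second term satisfies $x_2(A/|x_1|)^2\|x_2\omega\|_1\leq x_2R^{-2}\|x_2\omega\|_1$, and integrating against $\omega$ yields $R^{-2}\|x_2\omega\|_1^2$. For its first term we write $x_2^{2-\frac{2}{p}}=x_2\cdot x_2^{1-\frac{2}{p}}$. Because $p\geq 2$, the exponent $1-\frac 2p$ is nonnegative, so $x_2^{1-\frac2p}\leq R^{1-\frac2p}$ on $\{x_2<R\}$. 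Integrating against $\omega$ gives $A^{-\frac1p}R^{1-\frac2p}\|\omega\|_p\|x_2\omega\|_1$. Adding the two pieces proves \eqref{ineq:strip concentration of kinetic energy}, with $C$ inherited from Propositions \ref{prop:estimate for kinetic energy} and \ref{prop:decay in x1 estimate} and hence independent of $\omega$, $A$ and $R$.

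\textbf{Main point to check.} The only delicate step is the first term of \eqref{ineq:decay in x1 estimate}. The power $x_2^{2-2/p}$ must be converted into $x_2$ times a bounded factor, which is exactly where $p\geq 2$ is used: for $p<2$ the factor $x_2^{1-2/p}$ blows up near $x_2=0$. Everything else is bookkeeping of exponents, together with checking that the geometry $x_2<R\leq|x_1|/A$ places the far region inside the range where Proposition \ref{prop:decay in x1 estimate} applies.
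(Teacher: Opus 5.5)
Your proposal is correct and is essentially the paper's proof: it uses the same split of $\S\setminus Q$ into $\{x_2\geq R\}$ and $\{|x_1|\geq AR,\ x_2<R\}$, with \eqref{ineq:estimate of stream fct} on the first piece and \eqref{ineq:decay in x1 estimate} on the second, and it uses $p\geq 2$ exactly to bound $x_2^{2-2/p}\leq R^{1-2/p}x_2$ for $x_2<R$. Your reading of $Q$ as the bounded box $\{|x_1|<AR,\ x_2<R\}$ is the right one, since the paper's displayed definition of $Q$ has the inequality on $x_1$ reversed; your exponent bookkeeping in the high region is also cleaner than the paper's, which writes $\|x_2\omega\|_1^{\frac{p}{2p-1}}$ where $\|x_2\omega\|_1^{\frac{p-1}{2p-1}}$ is meant and drops the constant $C$.
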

\begin{proof}
We decompose 
\begin{align*}
    \int_{S\setminus Q} \psi(x)\omega(x) dx = \int\limits_{x_2 \geq R} \psi(x)\omega(x) dx + \int\limits_{x_2 < R, \ |x_1|\geq AR} \psi(x)\omega(x) dx,
\end{align*}
and estimate by \eqref{ineq:estimate of stream fct},
\begin{align*}
    \int_{x_2 \geq R} \psi(x)\omega(x) dx \leq C \|x_2\omega\|_1^{\frac{p}{2p-1}}\|\omega\|_p^{\frac{p}{2p-1}} \int\limits_{x_2\geq R} x_2^{\frac{p-1}{2p-1}}\omega(x) dx \leq R^{-\frac{p}{2p-1}}\|x_2\omega\|_1^{\frac{3p-2}{2p-1}}\|\omega\|_p^{\frac{p}{2p-1}}.
\end{align*}

Since $|x_1|\geq AR$ and $x_2 < R$ imply $x_2 < \frac{|x_1|}{A}$, applying \eqref{ineq:decay in x1 estimate} yields
\begin{align*}
    \int\limits_{x_2 < R, \ |x_1|\geq AR} &  \psi(x)\omega(x) dx \\
    \leq & C\left( A^{-\frac{1}{p}}\|\omega\|_p \int\limits_{x_2 < R} x_2^{2(1-\frac{1}{p})}\omega(x) dx  +  \left( \frac{A}{|x_1|} \right)^2 \|x_2\omega\|_1 \int\limits_{\S} x_2\omega(x)dx \right), \\
    \leq & C\left( A^{-\frac{1}{p}} R^{1-\frac{2}{p}} \|x_2\omega\|_1 \|\omega\|_p + R^{-2} \|x_2\omega\|_1^2 \right),
\end{align*}
where the last inequality comes from the assumption $p\geq 2$, so that $2(1-\frac{1}{p})\geq 1$. \eqref{ineq:strip concentration of kinetic energy} follows.
    
\end{proof}

Proposition \ref{prop:strip concentration of kinetic energy} implies that the kinetic energy $E[\omega]$ is continuous by the weak continuity in a certain proper subset of $L^p$, as we now show.

\begin{prop}[Convergence of the kinetic energy]
\label{prop:strip energy convergence}
Assume that the sequence \(\{\omega_n\} \subset L^p(\S)\) satisfies the following:
\begin{itemize}
    \item \(\sup_n \|x_2\omega_n\|_1 + \sup_n \|\omega_n\|_p \leq M\) for some constants \(M\), and
    \item \(\omega_n \rightharpoonup \omega\) in \(L^p(\S)\).
\end{itemize}
If each $\omega_n$ is nonnegative and satisfies \eqref{def:symmetry property}, then
\(E[\omega_n] \to E[\omega] \) as \(n \rightarrow \infty\).
\end{prop}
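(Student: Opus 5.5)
The plan is to localize the energy to a bounded box, where weak convergence suffices by compactness of the Green operator, and to control the tail uniformly using the concentration estimate of Proposition \ref{prop:strip concentration of kinetic energy}. We work in the strip setting of case \ref{case:general convergence on strips}, so $p\geq 2$ and that proposition is available.

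\textbf{Step 1: properties of the weak limit.} First I check that the limit $\omega$ belongs to the same class as the $\omega_n$.
\begin{itemize}
\item Nonnegativity, evenness in $x_1$, and monotonicity in $x_1>0$ each define a closed convex subset of $L^p(\S)$. They are therefore weakly closed, so $\omega$ satisfies \eqref{def:symmetry property}.
\item Weak lower semicontinuity gives $\|\omega\|_p\le M$.
\item Testing the weak convergence against $x_2\mathbf{1}_{B}$ for bounded sets $B$, together with $\omega_n\ge 0$, gives $\|x_2\omega\|_1\le M$.
\end{itemize}
Hence the estimate of Proposition \ref{prop:strip concentration of kinetic energy} applies to $\omega$ and to every $\omega_n$, with the same bound $M$.

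\textbf{Step 2: uniform tail bound.} Fix $R\ge L$ and $A\ge 2$, and let $B=B_{A,R}=\{x\in\S : |x_1|<AR\}$. This is the bounded complement of the region where the energy may escape; since $R\ge L$, the part $\{x_2\ge R\}$ is empty in the strip.

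For $\eta\in\{\omega_n,\omega\}$ with stream function $\psi_\eta$, positivity of $G$ and of $\eta$ gives
\begin{align*}
0\le 2E[\eta]-\int_B\int_B G(x,y)\eta(x)\eta(y)\,dx\,dy\le 2\int_{\S\setminus B}\psi_\eta\,\eta\,dx .
\end{align*}
By \eqref{ineq:strip concentration of kinetic energy} the right-hand side is bounded by
\begin{align*}
C\left(R^{-\frac{p}{2p-1}}M^{\frac{4p-2}{2p-1}}+A^{-\frac1p}R^{1-\frac2p}M^2+R^{-2}M^2\right).
\end{align*}
Given $\varepsilon>0$, I first choose $R$ large so that the first and third terms are below $\varepsilon$. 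With $R$ fixed, I then choose $A$ large to kill the middle term. The order matters, because of the factor $R^{1-2/p}$ in the middle term. This bound is uniform in $n$.

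\textbf{Step 3: convergence of the local energy on $B$.} It remains to show
\begin{align*}
\int_B\int_B G\,\omega_n\omega_n\to\int_B\int_B G\,\omega\omega .
\end{align*}
I expect this to be the main step.

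On the bounded set $B$ we have $0\le G\le G_H\le C_B(1+|\ln|x-y||)$. Therefore the kernel $G\mathbf{1}_{B\times B}$ lies in $L^{p'}(B\times B)$. An integral operator $T$ with kernel in $L^{p'}(B\times B)$ is compact from $L^p(B)$ to $L^{p'}(B)$: approximate the kernel in $L^{p'}(B\times B)$ by finite sums of products $a_i(x)b_i(y)$, which give finite-rank operators, and pass to the operator-norm limit.

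Since $\omega_n\mathbf{1}_B\rightharpoonup\omega\mathbf{1}_B$ in $L^p(B)$, compactness gives $T(\omega_n\mathbf{1}_B)\to T(\omega\mathbf{1}_B)$ strongly in $L^{p'}(B)$. Writing $\int_B\int_B G\,\omega_n\omega_n=\langle T(\omega_n\mathbf{1}_B),\omega_n\rangle$, this is a strong–weak pairing, which converges to $\langle T(\omega\mathbf{1}_B),\omega\rangle$.

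\textbf{Conclusion.} Combining the steps,
\begin{align*}
\limsup_n|E[\omega_n]-E[\omega]|\le C\varepsilon ,
\end{align*}
and since $\varepsilon>0$ is arbitrary, $E[\omega_n]\to E[\omega]$.
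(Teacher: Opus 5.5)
Your proposal is correct and follows essentially the same route as the paper: localize to the bounded box $\{|x_1|<AR\}$, control the tail uniformly in $n$ via Proposition \ref{prop:strip concentration of kinetic energy} (fixing $R$ before $A$), and pass to the limit on the box using $G\in L^{p'}$ there. The differences are minor improvements rather than a new approach. You explicitly verify that the weak limit $\omega$ inherits nonnegativity, \eqref{def:symmetry property} and the bounds; the paper needs this but leaves it implicit when it says ``estimating $E[\omega]$ in the same way''. You also obtain the local convergence from compactness of the integral operator with $L^{p'}$ kernel, whereas the paper uses weak convergence of $\omega_n(x)\omega_n(y)$ in $L^p(Q\times Q)$.
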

\begin{proof}
    We decompose the energy into two terms
    \begin{align*}
        2E[\omega_n] = \int_{\S} \psi_n(x)\omega_n(x)\,dx = \int_{Q} \psi_n(x)\omega_n(x)\,dx + \int_{\S\setminus Q} \psi_n(x)\omega_n(x)\,dx,
    \end{align*}
    and observe that
    \begin{align*}
        \int_{Q} \psi_n(x)&\omega_n(x)dx \\
        &= \int_{Q} \omega_n(x) \int_{Q} G(x,y)\omega_n(y)\,dydx + \int_{Q} \omega_n(x)\int_{\S\setminus Q} G(x,y)\omega_n(y)dydx,\\
        &= \int_{Q} \omega_n(x) \int_{Q} G(x,y)\omega_n(y)\,dydx + \int_{\S\setminus Q} \omega_n(y)\int_{Q} G(x,y)\omega_n(x)dxdy,\\
        &\leq \int_{Q} \omega_n(x) \int_{Q} G(x,y)\omega_n(y)\,dydx + \int_{\S \setminus Q} \psi_n(x)\omega_n(x)\,dx.
    \end{align*}
    Applying \eqref{ineq:strip concentration of kinetic energy} yields
    \begin{align*}
        \Bigl|E[\omega_n] & - \frac{1}{2}\int_{Q}\int_{Q} G(x,y)\omega_n(x)\omega_n(y)\,dx\,dy \Bigr|  \leq CM^2\left(R^{-\frac{p}{2p-1}} + A^{-\frac{1}{p}} R^{1-\frac{2}{p}} + R^{-2} \right).
    \end{align*}
    By estimating $E[\omega]$ in the same way,
    \begin{align*}
        2|E[\omega_n] - E[\omega]| \le \Bigl| \int_{Q}\int_{Q} G(x,y) \left(\omega(x)\omega(y) - \omega_n(x)\omega_n(y) \right) \,dx\,dy \Bigr| \\
        + CM^2\left(R^{-\frac{p}{2p-1}} + A^{-\frac{1}{p}} R^{1-\frac{2}{p}} + R^{-2} \right).
    \end{align*}

    Next, for any $\eps>0$, there exists a sufficiently large $R$ such that 
    \begin{align*}
        2|E[\omega_n] - E[\omega]| \le \Bigl| \int_{Q}\int_{Q} G(x,y) \left(\omega(x)\omega(y) - \omega_n(x)\omega_n(y) \right) \,dx\,dy \Bigr| + CM^2 A^{-\frac{1}{p}} R^{1-\frac{2}{p}} + \frac{1}{5}\eps,
    \end{align*}
    fix such $n$-independence $R$, there exists another sufficiently large $A$ such that 
    \begin{align*}
        2|E[\omega_n] - E[\omega]| \le \Bigl| \int_{Q}\int_{Q} G(x,y) \left(\omega(x)\omega(y) - \omega_n(x)\omega_n(y) \right) \,dx\,dy \Bigr| + \frac{2}{5}\eps.
    \end{align*}
    Since $Vol(Q)<\infty$, $G(x,y) \in L^{p'}(Q \times Q)$ and $\omega_n(x)\omega_n(y) \rightharpoonup \omega(x)\omega(y)$ in $L^p(Q \times Q)$, for sufficiently large $n$,
    \begin{align*}
        \Bigl| \int_{Q}\int_{Q} G(x,y) \left(\omega(x)\omega(y) - \omega_n(x)\omega_n(y) \right) \,dx\,dy \Bigr| \leq  \frac{1}{5}\eps.
    \end{align*}
    This concludes the convergence of kinetic energy.

\end{proof}

We have already gathered all the ingredients for proving the existence of a minimizer for \eqref{def:minimizing problem}. In the following, we will first define a slightly larger space $\widehat{K}_{\mu}\supseteq K_{\mu}$ and prove the existence of a minimizer in this set, and then strengthen it into a minimizer in $K_{\mu}$.

\begin{prop}\label{prop:strip exists of minimizer}
    Suppose either \ref{case:general convergence on half-plane} or \ref{case:general convergence on strips} holds.
    There exists a compactly supported minimizer $\omega\in K_{\mu}$ that satisfies \eqref{def:symmetry property} to the minimizing problem \eqref{def:minimizing problem}. Consequently, $S_{\mu,\lambda}\neq \emptyset$.
\end{prop}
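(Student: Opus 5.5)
We argue by the direct method, using Steiner symmetrization to restore the compactness lost through horizontal translations. The ambient space is $\widehat{K}_{\mu}=\{\omega\in L^p(\O)\mid \omega\ge 0,\ \int x_2\omega\le\mu\}\supseteq K_\mu$. Set $\widehat I_{\mu,\lambda}=\inf_{\widehat K_\mu}\{-E_{p,\lambda}\}$. Since $\omega\in\widehat K_\mu$ lies in $K_{\tau}$ with $\tau=\|x_2\omega\|_1\le\mu$, we get $\widehat I_{\mu,\lambda}=\inf_{0\le\tau\le\mu}I_{\tau,\lambda}$.

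\emph{Step 1: a symmetric, bounded minimizing sequence.} Take a minimizing sequence $\{\omega_n\}\subset K_\mu$ for $I_{\mu,\lambda}$. Replace each $\omega_n$ by its Steiner symmetrization $\omega_n^*$ from Proposition \ref{prop:steiner symmetrization}. For $\O=\R^2_+$ the Green's function $G_H$ also decreases in $|x_1-y_1|$, so the same rearrangement applies. Since $\|\omega_n^*\|_p=\|\omega_n\|_p$, $\|x_2\omega_n^*\|_1=\mu$ and $E[\omega_n^*]\ge E[\omega_n]$, the sequence $\{\omega_n^*\}$ is still minimizing and satisfies \eqref{def:symmetry property}. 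By Remark \ref{remark:uniformly bounded}, which needs $I_{\mu,\lambda}<0$ and is guaranteed by Corollaries \ref{cor:range of mu and lambda Omega=R^_+} and \ref{cor:range of mu and lambda Omega=S}, the sequence is bounded in $L^p$. Pass to a subsequence with $\omega_n^*\rightharpoonup\omega$ weakly in $L^p$. Then $\omega\ge0$, $\omega$ satisfies \eqref{def:symmetry property} (these properties are preserved under weak limits), and by Fatou's lemma (after a.e. convergence of averages, or testing against $x_2\mathbf 1_{B_R}$) $\|x_2\omega\|_1\le\mu$. So $\omega\in\widehat K_\mu$.

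\emph{Step 2: passing to the limit in the functional.} For $\O=\S$ with $p\ge2$, Proposition \ref{prop:strip energy convergence} gives $E[\omega_n^*]\to E[\omega]$. For $\R^2_+$ with $p>4/3$, Proposition \ref{prop:strip concentration of kinetic energy} uses $p\ge2$ and cannot be quoted. In that case we would first use the scaling identity $I_{\mu,\lambda}=\mu^{\frac{4p-4}{3p-4}}I_{1,\lambda}$ together with a standard concentration-compactness argument, where strict subadditivity is already known. Alternatively, we redo Proposition \ref{prop:decay in x1 estimate} for $G_H$ and replace the $x_2<R$ bound $x_2^{2-2/p}\le R^{1-2/p}x_2$ by the interpolation estimate \eqref{ineq:estimate of stream fct}. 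Weak lower semicontinuity of the $L^p$ norm then gives $-E_{p,\lambda}[\omega]\le\liminf(-E_{p,\lambda}[\omega_n^*])=I_{\mu,\lambda}$.

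\emph{Step 3: the mass is not lost.} Let $\tau=\|x_2\omega\|_1\le\mu$. Then $I_{\tau,\lambda}\le -E_{p,\lambda}[\omega]\le I_{\mu,\lambda}$, and $\tau>0$ because $I_{\mu,\lambda}<0=I_{0,\lambda}$. Suppose $\tau<\mu$. We then rescale amplitude, $\tilde\omega=\frac{\mu}{\tau}\omega\in K_\mu$, and compute
\begin{align*}
-E_{p,\lambda}[\tilde\omega]=-\frac{\mu^2}{\tau^2}E[\omega]+\frac{\mu^p}{\tau^p}\frac{1}{p\lambda}\|\omega\|_p^p .
\end{align*}
This does not directly give a strict decrease when $p>2$. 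So instead we use monotonicity: we aim to show $\mu\mapsto I_{\mu,\lambda}$ is strictly decreasing on the negativity range. Take a minimizer $\omega$ of $\widehat I_{\mu,\lambda}$ with $\|x_2\omega\|_1=\tau<\mu$. Add a small bump $\epsilon\mathbf 1_{B}$ placed far away in the region $x_2$ small (or by a vertical translation upward), to raise the first moment to $\mu$ while decreasing $-E_{p,\lambda}$. A far-away bump of height $h$ on a set of measure $|B|$ costs $\frac{1}{p\lambda}h^p|B|$ and gains the cross-energy $h\int_B\psi$. Choosing $B$ where $\psi>0$ and $h$ small makes the linear gain dominate the $h^p$ cost, since $p>1$. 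This yields $-E_{p,\lambda}$ strictly below $I_{\mu,\lambda}$ at mass $\tau+h\int_Bx_2$. The mass is then corrected to exactly $\mu$ by iterating, or by choosing $B$ and $h$ so that $h\int_Bx_2=\mu-\tau$ while $\psi$ stays positive on $B$. Granting this, the condition $\psi\omega$ forcing $\Psi\le0$ off the support, as in Proposition \ref{prop:general structure}, is contradicted. Hence $\tau=\mu$ and $\omega\in S_{\mu,\lambda}$.

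\emph{Step 4: compact support.} Compact support then follows from Proposition \ref{prop:compactly support of minimizer}. I expect Step 3, ruling out loss of first moment, to be the main obstacle. If the bump argument is too delicate, the fallback is the Euler–Lagrange route: a minimizer over $\widehat K_\mu$ with inactive constraint satisfies $\omega^{p-1}=\lambda\psi_+$, i.e. $W=0$. The Pohozaev-type identities in the proof of Proposition \ref{prop:positive W} exclude this for $p>2$ on $\S$ and on $\R^2_+$.
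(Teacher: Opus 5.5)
\textbf{The gap is in Step 3: your bump argument does not close.} What you must beat is $I_{\mu,\lambda}$ at the \emph{fixed} moment $\mu$, so the bump has to carry the whole deficit $\mu-\tau$.

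On $\S$, where $x_2\le L$, the requirement $h\int_B x_2=\mu-\tau$ forces $|B|\ge(\mu-\tau)/(Lh)$. So the $L^p$ cost $\frac{1}{p\lambda}h^p|B|$ is of order $h^{p-1}$, not $h^p$. The total gain, interaction plus the bump's self-energy, is only $O(h)$. Indeed $\int_{\S}G_S(x,y)\,dx=\frac{y_2(L-y_2)}{2}$ gives $\|\psi\|_{L^1(\S)}\le\frac{L}{2}\tau$, and the interaction is even smaller if $B$ is far away, where $\psi\to0$.

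For $p=2$ both sides are linear in $h$, so taking $h$ small buys nothing. For $p>2$, a non-negligible gain needs $B$ to meet the region where $\omega$ lives. That introduces the first-order cost $\frac{h}{\lambda}\int_B\omega^{p-1}$, which you cannot compare with $h\int_B\psi$, because at this stage $\omega$ satisfies no Euler--Lagrange relation. ``Iterating'' only yields values of $-E_{p,\lambda}$ below $I_{\mu,\lambda}$ at moments $\tau+\delta<\mu$. That contradicts nothing unless you already know $\tau\mapsto I_{\tau,\lambda}$ is non-increasing, which is what you set out to prove.

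The working version of your idea uses, as the far-away bump, an entire compactly supported near-minimizer of $I_{\mu-\tau,\lambda}$. Its gain is its own value $I_{\mu-\tau,\lambda}<0$, which is available for every mass on $\S$ because the $\lambda$-threshold is mass-independent. Translating it apart in $x_1$ gives $I_{\mu,\lambda}\le I_{\tau,\lambda}+I_{\mu-\tau,\lambda}<I_{\tau,\lambda}$, contradicting $I_{\tau,\lambda}\le -E_{p,\lambda}[\omega]\le I_{\mu,\lambda}$. On $\R^2_+$ the scaling identity gives this monotonicity at once.

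\textbf{Your fallback is the paper's Step 2, but it does not apply to the $\omega$ you built.} Since you minimized over $K_\mu$, you only know $\widehat I_{\mu,\lambda}\le -E_{p,\lambda}[\omega]\le I_{\mu,\lambda}$. You do not know that $\omega$ minimizes over $\widehat K_\mu$, so mass-increasing perturbations $\omega+\eps h$ are not known to be non-improving.

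The paper takes the minimizing sequence in $\widehat K_\mu$ from the outset. The $L^p$ bound survives, since $\widehat I_{\mu,\lambda}\le I_{\mu,\lambda}<0$ and $\|x_2\omega_n\|_1\le\mu$. The symmetric weak limit then attains $\widehat I_{\mu,\lambda}$, hence minimizes $I_{\tau,\lambda}$, and Proposition \ref{prop:positive W} at moment $\tau$ gives $W>0$. If $\tau<\mu$, every $h$ from the proof of Proposition \ref{prop:general structure} is admissible without the $h_1$-correction, which forces $W=0$, a contradiction.

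Two smaller points:
\begin{enumerate}
\item Your fallback invokes the Pohozaev identity only for $p>2$. For $p=2$ on $\S$ the same identity gives $\p_1\psi\equiv0$, hence $\psi\equiv0$ because $\psi\in L^2(\S)$.
\item Your concern in Step 2 is justified. The paper's proof also runs only on $\S$, since Proposition \ref{prop:strip energy convergence} needs $p\ge2$. On $\R^2_+$, existence really comes from the concentration-compactness argument of Section \ref{sec:general convergence theorem for half plane and strip}; Steiner-symmetrizing a minimizer then keeps it a minimizer.
\end{enumerate}
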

\begin{proof} 
For any $\mu>0$, define 
\begin{align*}
    \widehat{K}_{\mu}=\left\{ \omega\in L^p(\S) \mid \omega\geq 0, \int_\S x_2\omega dx \leq \mu  \right\},
\end{align*}
and the minimizing problem associated with $\widehat{K}_{\mu}$,
\begin{align}
    \widehat{I}_{\mu,\lambda}=\inf_{\omega\in \widehat{K}_{\mu}}\left\{ -E_{p,\lambda}[\omega] \right\}. \label{def:slightly larger minimizing problem}
\end{align}

\underline{\textit{Step 1. Existence of minimizer $\omega\in \widehat{K}_{\mu}$ to \eqref{def:slightly larger minimizing problem}.}} Let $\{\omega_n\}$ be arbitrary minimizing sequence in $\widehat{K}_{\mu}$ to \eqref{def:slightly larger minimizing problem}. By Steiner symmetrization, we may assume that each $\omega_n$ satisfies \eqref{def:symmetry property}. Since $\{\omega_n\}$ is uniformly bounded in $L^p$ as we proved in Remark \ref{remark:uniformly bounded}, by choosing a subsequence (still denoted by $\{\omega_n\}$), there exists $\omega \in L^p$ such that $\omega_n \rightharpoonup \omega$ in $L^p$, so that $\|\omega\|_p\leq \liminf_{n\to \infty} \|\omega_n\|_p$ and $\|x_2 \omega\|_1\leq \liminf_{n\to \infty} \|x_2 \omega_n\|_1\leq \mu$. The limit $\omega$ belongs to $\widehat{K}_{\mu}$ and satisfies \eqref{def:symmetry property}. Since $\{\omega_n\}$ satisfies the assumption of Proposition \ref{prop:strip energy convergence}, therefore,
\begin{align*}
    -\widehat{I}_{\mu,\lambda} = \lim_{n\rightarrow\infty}E_{p,\lambda}[\omega_n]\leq \lim_{n\rightarrow\infty}E[\omega_n]-\frac{1}{p\lambda}\liminf_{n\rightarrow\infty}\|\omega_n\|_{2}^2 \leq E_{p,\lambda}[\omega]\leq -\widehat{I}_{\mu,\lambda}
\end{align*}

Thus 
\begin{align*}
    -E_{p,\lambda}[\omega] = \widehat{I}_{\mu,\lambda},
\end{align*}
this shows $\omega$ is a minimizer of \ref{def:slightly larger minimizing problem} in $\widehat{K}_{\mu}$. 

\underline{\textit{Step 2. $\omega$ is a minimizer to \eqref{def:minimizing problem} in $K_{\mu}$.}} Let 
\begin{align*}
    \mu_0=\int_{\S} x_2\omega dx,
\end{align*}
then by $\mu_0\leq \mu$ and the subset relation, $\omega$ should be a minimizer to \eqref{def:minimizing problem} with parameters $\mu_0$ and $\lambda$. Thus, according to the structural analysis, Proposition \ref{prop:general structure}, and the analysis of the parameter, Proposition \ref{prop:positive W}, there exists unique $W=W(\omega)>0$ such that 
\begin{align*}
    \omega^{p-1}=\lambda (\psi-Wx_2)_+.
\end{align*}

Further suppose $\mu_0 < \mu$. Notice that $I_{\mu_0,\lambda}=-E_{p,\lambda}[\omega]<0$, then then $\omega$ is a non-trivial function. For any function $h$ chosen in the Proposition \ref{prop:general structure}, there exists a threshold $\eps_0>0$ such that for any $\eps<\eps_0$, 
\begin{align*}
    \int_{\S}x_2(\omega+\eps h)dx=\mu_0+\eps\int_{\S}x_2 h dx \leq \mu,
\end{align*}
thus $\omega+\eps h\in \widehat{K}_\mu$. By maximality of $\omega\in \widehat{K}_{\mu}$, 
\begin{align*}
    0\geq \frac{d}{d\eps}\Bigg|_{\eps=0}E_{p,\lambda}[\omega+\eps h] = \int_{\S} \left( \psi-\frac{1}{\lambda}\omega^{p-1} \right) h dx.
\end{align*}
In the same way as in the proof of Proposition \ref{prop:general structure}, this implies 
\begin{align*}
    \left\{\begin{aligned}
        \psi-\frac{1}{\lambda} \omega^{p-1} & = 0, && \tt{ on } \{\omega>0\}, \\
        \psi & \leq 0, && \tt{ on } \{\omega=0\}.
    \end{aligned}\right.
\end{align*}
Thus $\omega$ satisfy the equation 
\begin{align*}
    \omega^{p-1}=\lambda (\psi- 0 x_2)_+.
\end{align*}
Thanks to the uniqueness of strictly positive $W$, this yields a contradiction. We proved $\mu_0 = \mu$, which means $\omega \in K_{\mu}$, and hence a minimizer to \eqref{def:minimizing problem} in $K_{\mu}$. The compactness of $\spt \omega$ follows from Proposition \ref{prop:compactly support of minimizer}. The proof is complete.

\end{proof}

Finally, given Proposition \ref{prop:strip exists of minimizer}, it is possible to finish the strict subadditivity through the $x_1$-translation invariance of $\S$.

\hfill

\noindent\underline{\textit{Proof of the Proposition \ref{prop:strictly subadditivity} with \ref{case:general convergence on strips}:}}

Let $\omega_1\in K_{\alpha}$ (resp. $\omega_2\in K_{\mu-\alpha}$) be a compactly supported minimizer of $-E_{p,\lambda}$ in $K_{\alpha}$ (resp. $K_{\mu-\alpha}$), then $\omega_1+\omega_2\in K_{\mu}$. By proper $x_1$-translation, it is possible to assume $\spt \omega_1 \cap \spt \omega_2 = \emptyset$, then
\begin{align*}
    I_{\mu,\lambda}
    \leq & -E_{p,\lambda}[\omega_1+\omega_2], \\
    = & - E_{p,\lambda}[\omega_1] - E_{p,\lambda}[\omega_1] - \int_\S \int_\S G(x,y) \omega_1(x) \omega_2(y) dx dy, \\
    < & I_{\alpha,\lambda} + I_{\mu-\alpha,\lambda},
\end{align*}
where the strict inequality in the last step comes from both $\omega_1$ and $\omega_2$ are non-trivial.

\rightline{$\Box$}

\subsection{A General Convergence Theorem} \label{sec:general convergence theorem for half plane and strip}

In this section, we first state the concentration-compactness proposition on the half-plane and the strip (Proposition \ref{prop:concentration-compactness lemma}), and then deduce the first two cases of Theorem \ref{thm:general convergence theorem without L1}.

\begin{prop}[Lemma 4.1 in \cite{abe2022stability} and Proposition 4.13 in \cite{Dong2026StabilityOV}]\label{prop:concentration-compactness lemma}
Let $\O=\R^2_+$ or $\S$, $\mu>0$. Let \(\{\rho_n\}\subset L^1(\O)\) satisfy
\begin{align*}
\rho_n\ge 0,\quad n\ge 1,\qquad \int_{\O}\rho_n\,dx = \mu_n \to \mu \quad\text{as } n\to\infty .
\end{align*}
There exists a subsequence \(\{\rho_{n_k}\}\) satisfying one of the following:
\begin{enumerate}
    \item \textbf{(Compactness)} There exists a sequence \(\{y_k\}\subseteq\overline{\O}\) such that \(\rho_{n_k}(\cdot+y_k)\) is tight, i.e., for every \(\epsilon>0\) there exists \(R>0\) such that
    \begin{align*}
        \liminf_{k\to\infty}\int_{B(y_k,R)\cap \O}\rho_{n_k}\,dx \ge \mu-\epsilon.
    \end{align*}
    If $\O=\S$, it is possible to assume $y_{k,2}=0$ for all $k$.
    \item \textbf{(Vanishing)} For each \(R>0\)
    \begin{align*}
        \lim_{k\to\infty}\sup_{y\in \O}\int_{B(y,R)\cap \O}\rho_{n_k}\,dx = 0 .
    \end{align*}
    \item \textbf{(Dichotomy)} There exists \(\alpha\in(0,\mu)\) such that for every \(\epsilon>0\) there exist \(k_0\ge 1\) and \(\{\rho_k^1\},\{\rho_k^2\}\subset L^1(\O)\) such that \(\spt\rho_k^1\cap\spt\rho_k^2=\emptyset\), \(0\le\rho_k^i\le\rho_{n_k}\) for \(i=1,2\), and
    \begin{align*}
        \limsup_{k\to\infty}\Bigl\{\|\rho_{n_k}-\rho_k^1-\rho_k^2\|_{1}
        + \Bigl|\int_{\O}\rho_k^1\,dx-\alpha\Bigr|
        + \Bigl|\int_{\O}\rho_k^2\,dx-(\mu-\alpha)\Bigr|\Bigr\} \le \epsilon ,
    \end{align*}
    \begin{align*}
    d(\spt\rho_k^1,\spt\rho_k^2) \to \infty \quad\text{as } k\to\infty .
    \end{align*}
\end{enumerate}
\end{prop}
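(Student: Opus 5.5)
This is Lions' concentration–compactness lemma, so I would follow Lions' argument through the concentration function. The one domain-specific point is the remark that on $\S$ the centers may be taken with $y_{k,2}=0$. For each $n$ define
\begin{align*}
Q_n(R)=\sup_{y\in\overline{\O}}\int_{B(y,R)\cap\O}\rho_n\,dx ,\qquad R\ge 0 .
\end{align*}
Each $Q_n$ is nondecreasing in $R$ and bounded by $\mu_n$, and $\mu_n\to\mu$, so the family is uniformly bounded. By Helly's selection theorem there is a subsequence $n_k$ and a nondecreasing function $Q$ with $Q_{n_k}(R)\to Q(R)$ for every $R\ge0$. I then set $\alpha=\lim_{R\to\infty}Q(R)\in[0,\mu]$, and the three alternatives correspond to $\alpha=0$, $\alpha=\mu$ and $0<\alpha<\mu$.

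If $\alpha=0$, then $Q\equiv0$, which is exactly vanishing. If $\alpha=\mu$, fix $\epsilon$ and pick $R_0$ with $Q(R_0)>\mu/2$. For $k$ large choose $y_k$ with $\int_{B(y_k,R_0)}\rho_{n_k}>\mu/2$. For any $\epsilon<\mu/2$ choose $R_\epsilon$ with $Q(R_\epsilon)>\mu-\epsilon$, and take $\tilde y_k$ nearly realizing $Q_{n_k}(R_\epsilon)$. Since $\mu_{n_k}\to\mu$, the balls $B(y_k,R_0)$ and $B(\tilde y_k,R_\epsilon)$ each carry more than half the mass, so they must intersect. Hence $B(\tilde y_k,R_\epsilon)\subset B(y_k,R_0+2R_\epsilon)$, which gives tightness with radius $R=R_0+2R_\epsilon$ and centers $y_k$ that do not depend on $\epsilon$.

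For the strip, the refinement is straightforward: $\S$ has height $L$, so $B(y,R)\cap\S\subset B((y_1,0),R+L)$. Replacing $y_k$ by $(y_{k,1},0)$ and $R$ by $R+L$ keeps the estimate. On $\R^2_+$ the points $y_k$ just remain in $\overline{\O}$.

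If $0<\alpha<\mu$, I follow Lions. Given $\epsilon$, pick $R$ with $Q(R)>\alpha-\epsilon$. Then take $R_k\to\infty$ slowly enough (a diagonal argument using $Q_{n_k}(R_k)\to\alpha$) that $Q_{n_k}(R_k)\le\alpha+\epsilon$ for $k\ge k_0$, and choose $y_k$ with $\int_{B(y_k,R)}\rho_{n_k}\ge\alpha-\epsilon$. Set
\begin{align*}
\rho_k^1=\rho_{n_k}\mathbf 1_{B(y_k,R)},\qquad \rho_k^2=\rho_{n_k}\mathbf 1_{\O\setminus B(y_k,R_k)} .
\end{align*}
These satisfy $0\le\rho_k^i\le\rho_{n_k}$, their supports are disjoint, and $d(\spt\rho_k^1,\spt\rho_k^2)\ge R_k-R\to\infty$. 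The annulus carries mass $\int_{R\le|x-y_k|<R_k}\rho_{n_k}\le Q_{n_k}(R_k)-(\alpha-\epsilon)\le2\epsilon$. From this, $\int\rho_k^1\in[\alpha-\epsilon,\alpha+\epsilon]$, and since $\mu_{n_k}\to\mu$ we get $\int\rho_k^2\approx\mu-\alpha$ within $3\epsilon$ for large $k$. Rescaling $\epsilon$ gives the claim.

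I expect the main obstacle to be bookkeeping rather than any single hard step. The diagonal choice of $R_k$ must make $Q_{n_k}(R_k)\to\alpha$ while $R_k\to\infty$, and the supremum defining $Q_n$ may not be attained, so every choice of center has to be made only up to $\epsilon$. The domain enters only in the strip refinement, where the bounded height $L$ makes it immediate.
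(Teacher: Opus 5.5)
Your proposal is correct and is essentially the standard Lions concentration-function argument: Helly selection on $Q_n(R)$, the trichotomy according to $\alpha=\lim_{R\to\infty}Q(R)$, and the cut-offs $\rho_{n_k}\mathbf{1}_{B(y_k,R)}$ and $\rho_{n_k}\mathbf{1}_{\O\setminus B(y_k,R_k)}$ with $Q_{n_k}(R_k)\to\alpha$. The paper does not prove this statement itself; it relies on exactly this argument by citing Lemma 4.1 of \cite{abe2022stability} and Proposition 4.13 of \cite{Dong2026StabilityOV}, and your strip refinement via $B(y,R)\cap\S\subset B((y_1,0),R+L)$ is the only domain-specific adjustment needed.
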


Given that $I_{\mu,\lambda}<0$ and the strict subadditivity of $I_{\mu,\lambda}$ with respect to $\mu$, for any sequence $\{\omega_n\}$ satisfying the assumptions of Theorem \ref{thm:general convergence theorem without L1}, the vanishing and dichotomy cases of the subsequence indicated in Proposition \ref{prop:concentration-compactness lemma} can be excluded respectively. This shows that $\{\omega_n\}$ admits a subsequence satisfy the compactness, thereby completing the proof of Cases \ref{case:general convergence on half-plane} and \ref{case:general convergence on strips} of Theorem \ref{thm:general convergence theorem without L1}.


The following proof is similar to that of Theorem 1.3 in \cite{abe2022stability}, Theorem 5.2 in \cite{abe2025existence}, and Theorem 4.14 in \cite{Dong2026StabilityOV}. Moreover, the case $\Omega=\mathbb{R}^2_+$ with $p=2$ has already been settled in Theorem 2.5 in \cite{Abe2025StabilityOL}. Nevertheless, as a comprehensive and unifying work, we present the complete proof below.

\hspace*{\fill} 

\noindent\underline{\textit{Proof of Case \ref{case:general convergence on half-plane} and \ref{case:general convergence on strips} in Theorem \ref{thm:general convergence theorem without L1}:}}

Let $\{ \omega_n \}$ be a minimizing sequence such that $\omega_n\in K_{\mu_n}$, $\mu_n \to \mu$ and $I_{\mu_n,\lambda} \to I_{\mu,\lambda}$ as $n\to \infty$. By \eqref{ineq:minimizing seq uniformly bounded}, $\{\omega_n\}$ is uniformly bounded in $L^p$. We set $\rho_n = x_2\omega_n$ and apply the Proposition \ref{prop:concentration-compactness lemma}. Then, for
a certain subsequence still denoted by $\{\omega_n\}$, one of the following three cases should occur.

\underline{\textit{Case 1. Dichotomy:}} There exists \(\alpha\in(0,\mu)\) such that for every \(\epsilon>0\) there exist \(k_0\ge 1\) and \(\{x_2\omega_{1,n}\},\{x_2\omega_{2,n}\}\subset L^1(\O)\) such that $\spt\omega_{1,n}\cap\spt\omega_{2,n}=\emptyset$, $0\le\omega_{i,n}\le\omega_{n}$ for $i=1,2$, and $\omega_{3,n}=\omega_{n}-\omega_{1,n}-\omega_{2,n}$ satisfy 
\begin{align*}
    \limsup_{k\to\infty}\Bigl\{\|x_2\omega_{3,n}\|_{1}
    + \Bigl|\alpha_n-\alpha\Bigr|
    + \Bigl|\beta_n-(\mu-\alpha)\Bigr|\Bigr\} \leq \epsilon ,
\end{align*}
where 
\begin{align*}
\alpha_n=\int_{\O}x_2\omega_{1,n} dx, && \beta_n=\int_{\O}x_2\omega_{2,n} dx, 
\end{align*}
and $d_n=d(\spt\omega_{1,n},\spt\omega_{2,n}) \to \infty $ as $n \to \infty$. Choosing a further subsequence, we may assume $\alpha_n \to \alpha_\eps $, $\beta_n\to \beta_\eps$ and $\sup_n \|x_2\omega_{3,n}\|_1\leq 2\eps$. Using estimates \eqref{ineq:estimate of energy crossing term}, \eqref{ineq:estimate of kinetic energy} together with the comparison of Green's function \eqref{ineq:comparison between green's function}, we have
\begin{align*}
    E[\omega_n] = & E[\omega_{1,n}] + E[\omega_{2,n}] + E[\omega_{3,n}] \\
    & + \int_{\O}\int_{\O}G(x,y)(\omega_{1,n}(x)+\omega_{2,n}(x))\omega_{3,n}(y)dxdy + \int_{\O}\int_{\O}G(x,y)\omega_{1,n}(x)\omega_{2,n}(y)dxdy, \\
    \leq & E[\omega_{1,n}] + E[\omega_{2,n}] + C\left(  \|x_2\omega_{3,n}\|_{1}^{\frac{4p-4}{3p-2}}\|\omega_n\|_p^{\frac{2p}{3p-2}} + \mu_n^{\frac{2p-2}{3p-2}}\|x_2\omega_{3,n}\|_p^{\frac{2p}{3p-2}}\right) + \mu_n^2 d_n^{-2}, \\
    \leq & E[\omega_{1,n}] + E[\omega_{2,n}] + C(\mu_n, \mu,\lambda) (\eps^{\frac{4p-4}{3p-2}} + \eps^{\frac{2p}{3p-2}}) + \mu_n^2 d_n^{-2}.
\end{align*}
Thus we obtain
\begin{align*}
    E_{p,\lambda}[\omega_n] \leq & E[\omega_{1,n}] + E[\omega_{2,n}] + C(\mu_n, \mu,\lambda) (\eps^{\frac{4p-4}{3p-2}} + \eps^{\frac{2p}{3p-2}}) + \mu_n^2 d_n^{-2}\\
    & -\frac{1}{p\lambda}\int_{\O}(\omega_{1,n}+\omega_{2,n}+\omega_{3,n})^p dx, \\
    \leq &  E[\omega_{1,n}] + E[\omega_{2,n}] + C(\mu_n, \mu,\lambda) (\eps^{\frac{4p-4}{3p-2}} + \eps^{\frac{2p}{3p-2}}) + \mu_n^2 d_n^{-2} \\
    & -\frac{1}{p\lambda}\int_{\O}\omega_{1,n}^p+\omega_{2,n}^p dx, \\
    = & E_{p,\lambda}[\omega_{1,n}] + E_{p,\lambda}[\omega_{2,n}] + C(\mu_n, \mu,\lambda) (\eps^{\frac{4p-4}{3p-2}} + \eps^{\frac{2p}{3p-2}}) + \mu_n^2 d_n^{-2}.
\end{align*}
Letting $n\to \infty$
\begin{align*}
    -I_{\mu,\lambda} \leq & \limsup_{n\to \infty} E_{p,\lambda}[\omega_{1,n}] + \limsup_{n\to \infty} E_{p,\lambda}[\omega_{2,n}] + C( \mu,\lambda) (\eps^{\frac{4p-4}{3p-2}} + \eps^{\frac{2p}{3p-2}}), \\
    \leq & \limsup_{n\to \infty} (-I_{\alpha_n}) + \limsup_{n\to \infty} (-I_{\beta_n}) + C( \mu,\lambda) (\eps^{\frac{4p-4}{3p-2}} + \eps^{\frac{2p}{3p-2}}), \\
    = & -I_{\alpha_\eps} - I_{\beta_\eps} + C( \mu,\lambda) (\eps^{\frac{4p-4}{3p-2}} + \eps^{\frac{2p}{3p-2}}),
\end{align*}
where the last inequality relies on the continuity of $I_{(\cdot),\lambda}$ established in Proposition \ref{prop:continuous of I_mu}. Recall 
\begin{align*}
\limsup_{k\to\infty}\Bigl\{\Bigl|\alpha_n-\alpha\Bigr|
+ \Bigl|\beta_n-(\mu-\alpha)\Bigr|\Bigr\} \le \epsilon,
\end{align*}
for large $n$, whence
\begin{align*}
\Bigl|\alpha_\eps-\alpha\Bigr|
+ \Bigl|\beta_\eps-(\mu-\alpha)\Bigr| \leq 2\epsilon.
\end{align*}
Letting $\eps\to \infty$, and using the continuity of $I_{(\cdot),\lambda}$,
\begin{align*}
    I_{\mu,\lambda} \geq I_{\alpha} + I_{\mu-\alpha}.
\end{align*}
This contradicts the strict subadditivity \eqref{ineq:strictly subadditivity}. Hence dichotomy cannot occur.

\underline{\textit{Case 2. Vanishing:}} Suppose that vanishing occurs. Namely, for each \(R>0\)
\begin{align*}
\lim_{k\to\infty}\sup_{y\in \O}\int_{B(y,R)\cap \O}x_2\omega_{n}\,dx = 0 .
\end{align*}
We shall prove that $\lim_{n\to \infty}E[\omega_n]=0$ and derive a contradiction to $I_{\mu,\lambda}<0$. For $R>1$ we decompose the energy as
\begin{align*}
    2&E[\omega_n] \\
    & = \left[ \iint\limits_{ \{ |x-y|\geq R\} } + \iint\limits_{ \{ |x-y|< R, G < Rx_2y_2\} } + \iint\limits_{ \{ |x-y|< R, G \geq  Rx_2y_2\} } \right] G(x,y)\omega_n(x)\omega_n(y)dxdy, \\
    & \leq \frac{\mu_n^2}{\pi}R^{-2} + R \iint\limits_{ \{ |x-y|< R\} } x_2y_2\omega_n(x)\omega_n(y)dxdy + \iint\limits_{ \{ |x-y|< R, G \geq Rx_2y_2\} } G(x,y)\omega_n(x)\omega_n(y)dxdy , \\
    & \leq \frac{\mu_n^2}{\pi}R^{-2} + R \mu_n \sup_{y\in \O}\int_{B(y,R)\cap \O}x_2\omega_{n}\,dx + \iint\limits_{ \{ |x-y|< R, G \geq Rx_2y_2\} } G(x,y)\omega_n(x)\omega_n(y)dxdy .
\end{align*}

In the region $|x-y|<R$, the inequality $G_H\geq G \geq R x_2 y_2$ implies $|x-y|\leq R^{-1/2}$. Thus
\begin{align*}
    \iint\limits_{ \{ |x-y|< R, G \geq Rx_2y_2\} } G(x,y)\omega_n(x)\omega_n(y)dxdy \leq \int_{\Omega}\int_{ \{ |x-y|\leq R^{-1/2}\} } G(x,y)\omega_n(y)dy ~ \omega_n(x) dx.
\end{align*}
Choose $r$ with $1<r<p$ and set
$$0<\alpha=\frac{p-r}{r(p-1)} <1,$$ 
so that, after a suitable choice of  $r$, 
$$t=\frac{p-r}{2(r-1)(p-1)}<1,$$
and $|x|^{-2\alpha}$ is integrable near the origin. Let $r'$ be the conjugate exponent of $r$, so that $1/r' = 1/t + 1/r -1$. using $\ln(1+t)\lesssim t^\alpha$ and Young’s convolution inequality we estimate
\begin{align*}
    \int_{\Omega}\int_{ \{ |x-y|\leq R^{-1/2}\} } & G(x,y)\omega_n(y)dy ~ \omega_n(x) dx \\
    & \lesssim \int_{\Omega}\int_{\O} \frac{1}{|x-y|^{2\alpha}} \mathbf{1}_{ B(0,R^{-1/2})}(x-y) \cdot y_2^\alpha\omega_n(y)dy ~ x_2^\alpha \omega_n(x) dx, \\ 
    & \leq \left\| \frac{1}{|x|^{2\alpha}}\mathbf{1}_{ B(0,R^{-1/2})} \ast  x_2^\alpha\omega_n \right\|_{r'} \|x_2^\alpha\omega_n \|_r, \\
    & \leq \left\| \frac{1}{|x|^{2\alpha}}\mathbf{1}_{ B(0,R^{-1/2})}\right\|_{t} \|x_2^\alpha\omega_n \|_r^2, \\
    & \lesssim_p R^{-1/q +\alpha}\|x_2\omega_n\|_1^{2\alpha}\|\omega_n\|_p^{2-2\alpha} \lesssim_{p,\mu_n, \mu, \lambda} R^{-1/q +\alpha}.
\end{align*}
Letting $n\to \infty$ and then $R\to \infty$, 
\begin{align*}
    \lim_{n\to \infty}E[\omega_n] = 0,
\end{align*}
contradicting $I_{\mu,\lambda}<0$. Hence vanishing is also impossible.

\underline{\textit{Case 3. Compactness:}} 
There exists a sequence $\{y_n\} \subset \mathbb{R}^2_+$ such that for arbitrary $\varepsilon > 0$, one can find $R_\eps > 0$ with
\begin{align}
    \liminf_{n\to\infty} \int_{B(y_n,R_\eps)\cap\mathbb{R}^2_+} x_2\omega_n(x) \,dx \ge \mu - \varepsilon. \label{ineq:compactness of minimizing sequence}
\end{align}
After a horizontal translation we may assume $y_n = (0, y_{2,n})$.

\underline{\textit{Case 3.1. $\limsup_{n\to\infty} y_{2,n} = \infty$:}} Choose a subsequence, we can assume $\lim_{n\to\infty} y_{2,n} = \infty$. Define $\psi_n(x)=\int_{\O}G(x,y)\omega_n(y)dy$. Using the supremum estimate \eqref{ineq:sup estimate of stream fct},
\begin{align*}
    2E[\omega_n] & = \left[ \int_{ B(y_n,R_\eps) \cap \O } + \int_{ \O\backslash B(y_n,R_\eps) }\right] \psi_n \omega_n dx, \\
    & \leq \frac{\mu_n\|\psi_n\|_{\infty}}{y_{2,n}-R_\eps} + C \|\omega_n\|_p^{\frac{2p}{3p-2}}\mu_n^{\frac{2p-2}{3p-2}}\|x_2\omega_n\|_{L^1(\O\backslash B(y_n,R_\eps))}^{\frac{2p-2}{3p-2}}, \\
    & \lesssim \frac{\mu_n^{\frac{5p-4}{3p-2}}\|\omega_n\|_p^{\frac{p}{3p-2}}}{y_{2,n}-R_\eps} + \|\omega_n\|_p^{\frac{2p}{3p-2}}\mu_n^{\frac{2p-2}{3p-2}}\eps^{\frac{2p-2}{3p-2}}.
\end{align*}
Hence
\begin{align*}
    0\leq \lim_{n\to\infty} E[\omega_n] 
    \leq C(\mu,\lambda)\eps^{\frac{2p-2}{3p-2}}.
\end{align*}
Since $\eps$ is arbitrary, $ \lim_{n\to\infty} E[\omega_n]=0$. This contradicts to $I_{\mu,\lambda}<0$.

\underline{\textit{Case 3.2. $\limsup_{n\to\infty} y_{2,n} < \infty$:}} Replacing $R_\varepsilon$ by $R(\varepsilon)=R_\varepsilon+\limsup_{n\to\infty}y_{2,n}$, we may assume $y_n=0$. By choosing a further subsequence, there exits $\omega\in L^p(\O)$ such that $\omega_n \rightharpoonup \omega$ in $L^p(\O)$ and
\begin{align}
    \int_{B(0,R)\cap\O} x_2 \omega \,dx \ge \mu - \varepsilon. \label{ineq:compactness of weak limit}
\end{align}
Thus, $\|x_2\omega\|_{1} = \mu$ and $\omega \in K_\mu$.

We now show that the kinetic energy converges under this weak convergence. For any $\eps>0$, pick $R=R(\eps)$ and write $B_+ = B(0,R) \cap \O$,
\begin{align*}
    2E[\omega_n]= \int_{B_+} \psi_n(x) \omega_n(x) \,dx + \int_{\O \setminus B_+} \psi_n(x) \omega_n(x) \,dx.
\end{align*}
Using the symmetry $G(x,y) = G(y,x)$, 
\begin{align*}
    \int_{B_+} \psi_n(x) \omega_n(x) \,dx
    &= \int_{B_+} \omega_n(x) \left( \int_{B_+} G(x,y) \omega_n(y) \,dy + \int_{\O \setminus B_+} G(x,y) \omega_n(y) \,dy \right) dx, \\
    &\leq \int_{B_+} \int_{B_+} G(x,y) \omega_n(x) \omega_n(y) \,dx\,dy + \int_{\O \setminus B_+} \psi_n(x) \omega_n(x) \,dx,
\end{align*}
hence
\begin{align*}
    0 \leq 2E[\omega_n] - \int_{B_+} \int_{B_+} G(x,y) \omega_n(x) \omega_n(y) \,dx\,dy  \leq 2 \int_{\O\setminus B_+} \psi_n(x) \omega_n(x) \,dx.
\end{align*}
By estimate \eqref{ineq:estimate of energy crossing term} and the compactness of minimizing sequence \eqref{ineq:compactness of minimizing sequence}, 
\begin{align*}
    \int_{\O\setminus B_+} \psi_n(x) \omega_n(x) \,dx = \int_{\O} \psi_n(x) \cdot \omega_n(x)\mathbf{1}_{\O\setminus B_+}(x) \,dx \leq C\mu^{\frac{2p-2}{3p-2}}\|\omega_n\|^{\frac{2p}{3p-2}}\eps^{\frac{2p-2}{3p-2}}.
\end{align*}
We therefore obtain
\begin{align*}
    \limsup_{n\to \infty}\left| E[\omega_n] - \int_{B_+} \int_{B_+} G(x,y) \omega_n(x) \omega_n(y) \,dx\,dy \right| \leq C(\mu,\lambda)\eps^{\frac{2p-2}{3p-2}}.
\end{align*}
Similarly, we have
\begin{align*}
    \left| E[\omega] - \int_{B_+} \int_{B_+} G(x,y) \omega(x) \omega(y) \,dx\,dy \right| \leq C(\mu,\lambda)\eps^{\frac{2p-2}{3p-2}}.
\end{align*}
Since $G(x,y) \in L^{p'}(B_+ \times B_+)$ and $\omega_n(x) \omega_n(y) \rightharpoonup \omega(x) \omega(y)$ in $L^p(B_+ \times B_+)$, 
\begin{align*}
    \lim_{n\to\infty}\iint_{B_+\times B_+}G(x,y)\omega_n(x)\omega_n(y)\,dx\,dy =\iint_{B_+\times B_+}G(x,y)\omega(x)\omega(y)\,dx\,dy.
\end{align*}
Combining these facts gives
\begin{align*}
    & \limsup_{n\to \infty} \left| E[\omega] - E[\omega_n] \right| = C(\mu,\lambda)\eps^{\frac{2p-2}{3p-2}}.
\end{align*}
Letting $\eps\to 0$ yields $\lim_{n\to\infty} E[\omega_n] = E[\omega]$. Consequently, 
\begin{align}
    -I_{\mu,\lambda}=\lim_{n\rightarrow\infty}E_{p,\lambda}[\omega_n]\leq \lim_{n\rightarrow\infty}E[\omega_n]-\frac{1}{p\lambda}\liminf_{n\rightarrow\infty}\|\omega_n\|_{p}^p \leq E_{p,\lambda}[\omega]\leq -I_{\mu,\lambda}. \label{eqn:kinetic convergence for half plane and strip}
\end{align}
Thus 
\begin{align*}
    -E_{p,\lambda}[\omega] = I_{\mu,\lambda}, && \|\omega\|_{p}=\lim_{n\rightarrow\infty}\|\omega_n\|_{p},
\end{align*}
the former shows $\omega$ is a minimizer of $I_{\mu,\lambda}=\inf_{\omega\in K_{\mu}}\{-E_{p,\lambda}[\omega]\}$ in $K_{\mu}$, the later show $\omega_n$ converge to $\omega$ strongly in $L^p$. From the tightness estimate \eqref{ineq:compactness of minimizing sequence}, \eqref{ineq:compactness of weak limit}, we have 
\begin{align*}
    \int_{\O} x_2 |\omega_n - \omega| \,dx
    &= \int_{B(0,R)\cap\O} x_2 |\omega_n - \omega| \,dx + \int_{\O \setminus B(0,R)} x_2 |\omega_n - \omega| \,dx, \\
    &\leq \left( \int_{B(0,R)\cap\O}x_2^{p'}dx \right)^{1/p'}\|\omega_n-\omega\|_p + 2\eps.
\end{align*}
Sending $n \to \infty$, $x_2\omega_n \to x_2\omega$ in $L^1(\O)$ follows.

\rightline{$\Box$}

\section{Studies on $\D$.} \label{sec:general convergence theorem for D}

This section focuses on the General Convergence Theorem on $\D$, i.e., Case \ref{case:general convergence on finite volume} of Theorem 1.4. The proof here no longer relies on the concentration-compactness proposition, Proposition \ref{prop:concentration-compactness lemma}; instead, we exploit the decay of $\D$ given by \eqref{def:upper and lower part of domain}, to establish the continuity of the kinetic energy under weak convergence. This continuity yields different outcomes depending on the decay rate. Specifically, when $q\in[0,2]$, the proof can be completed for $p>1$; while for $q>2$, the range will be limited to $1<p \leq \frac{q}{q-1}$. Combining these two regimes with the fact that the boundedness of minimizing sequences holds only for $p>4/3$ (Remark \ref{remark:uniformly bounded}), we obtain precisely the parameter range stated in Case \ref{case:general convergence on finite volume} of Theorem 1.4, and the convergence of the kinetic energy then completes the proof of the General Convergence Theorem.

\begin{prop}[Convergence of the kinetic energy]
\label{prop:weak_finite_volume_energy_convergence}
Let $\O=\D$ and one of the following holds: (1), $q\in[0,2]$, $p>1$; (2), $q\in (2, + \infty)$, $ 1 < p \leq q/(q-1)$.
Suppose \(\{\omega_n\} \subset L^p(\D)\) and \(\omega \in L^p(\D)\), such that 
\begin{align*}
    \omega_n \rightharpoonup \omega  \tt{ in } L^p(\D).
\end{align*}
If each $\omega_n$ is nonnegative, then \(E[\omega_n] \to E[\omega]\).
\end{prop}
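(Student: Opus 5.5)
Since $\omega_n\rightharpoonup\omega$ weakly in $L^p(\D)$, the uniform boundedness principle gives $M:=\sup_n\|\omega_n\|_p<\infty$, and weak lower semicontinuity gives $\|\omega\|_p\le M$; also $\omega\ge0$. The plan is to show that the bilinear form $E$ is continuous under weak convergence. We truncate $\D$ to a bounded region $Q_{R,\delta}$ on which $G_\D\in L^{p'}$. There weak convergence of the tensor products $\omega_n(x)\omega_n(y)\rightharpoonup\omega(x)\omega(y)$ in $L^p(Q\times Q)$ gives convergence, exactly as in the proofs of Proposition \ref{prop:strip energy convergence} and Case 3.2 above. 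The tails are controlled using only the geometry of $\D$ in \eqref{def:weak_finite_volume_domain_assumption}, with no $L^1$ or first-moment bound. As in those proofs, symmetry of $G$ gives
\begin{align*}
0\le 2E[\omega_n]-\int_Q\int_Q G\,\omega_n\omega_n\le 2\int_{\D\setminus Q}\psi_n\omega_n\,dx,
\end{align*}
so it suffices to make $\int_{\D\setminus Q}\psi_n\omega_n$ small uniformly in $n$.

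\textbf{Step 1 (a first-moment bound from the volume condition).} Since the statement gives no first-moment bound, we derive one for the pieces we need. On $D_u$ with $x_2\ge1$ we use $\text{Vol}(D_u)<\infty$. On $D_l$, Hölder gives
\begin{align*}
\int_{D_l}x_2\omega\le\Big(\int_{D_l}x_2^{p'}\Big)^{1/p'}\|\omega\|_p.
\end{align*}
This is finite once $p'\ge q$, i.e.\ $p\le q/(q-1)$, because $x_2<1$ gives $x_2^{p'}\le x_2^{q}$; when $q\le2$ it holds for every $p'\ge2$. This is where the dichotomy in $q$ enters. We then aim for an estimate of the form
\begin{align*}
\int_{A}\psi_n\omega_n\le C\,\|\omega_n\|_{L^p(A)}\,\|\omega_n\|_p\,\Phi(A),
\end{align*}
with $\Phi(A)\to0$ as the set $A$ shrinks in the weighted volume sense.

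\textbf{Step 2 (tail estimate).} We take $Q=\D\cap\{|x|<R,\ x_2>\delta\}$, whose complement splits into three parts: the far part of $D_u$, the far part of $D_l$, and the thin layer $\{x_2\le\delta\}$. We bound $\psi_n$ pointwise through \eqref{ineq:comparison between green's function} and the Hölder bound
\begin{align*}
\psi_n(x)\le\|G_H(x,\cdot)\|_{L^{p'}(\D)}\|\omega_n\|_p .
\end{align*}
The Green's function satisfies $G_H(x,y)\lesssim x_2y_2/|x-y|^2$ far away and has only a logarithmic singularity locally. Together with the volume condition, this should give $\|G_H(x,\cdot)\|_{L^{p'}(\D)}\lesssim x_2^{\theta}$ for some $\theta$ depending on $q,p$. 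Then
\begin{align*}
\int_{\D\setminus Q}\psi_n\omega_n\le M\int_{\D\setminus Q}x_2^\theta\omega_n\le M^2\Big(\int_{\D\setminus Q}x_2^{\theta p'}\Big)^{1/p'}.
\end{align*}
The right-hand side tends to $0$ as $R\to\infty$ and $\delta\to0$ by dominated convergence, using $\int_{D_l}x_2^q<\infty$ and $\text{Vol}(D_u)<\infty$, provided $\theta p'\ge q$ near $x_2=0$.

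\textbf{Main obstacle.} The hardest step is the uniform pointwise bound on $\|G_H(x,\cdot)\|_{L^{p'}(\D)}$ with a power of $x_2$ good enough that $\theta p'\ge q$. The far-field contribution $x_2\big(\int_\D y_2^{p'}|x-y|^{-2p'}\big)^{1/p'}$ must be finite using only the finiteness of $\int_{D_l} y_2^q$ and $\text{Vol}(D_u)$; this is exactly what forces $p'\ge q$ when $q>2$. I would try first to split $y$ into the regions $|x-y|<x_2/2$, the rest of $D_u$, and the rest of $D_l$, and check each exponent there. If the constraint comes out slightly off, the fallback is to interpolate with \eqref{ineq:estimate of stream fct} on $D_u$, where $x_2\ge1$.
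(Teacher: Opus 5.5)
Your Step 2 cannot be completed in Case (2), and the obstruction is not the far field. Bounding $\psi_n(x)\le\|G_H(x,\cdot)\|_{L^{p'}(\D)}\|\omega_n\|_p$ and then applying H\"older in $x$ against the global norm $\|\omega_n\|_p$ amounts to requiring $\int_{\D\setminus Q}\|G_H(x,\cdot)\|_{L^{p'}(\D)}^{p'}\,dx\to 0$. For $x\in D_l$ the far field $|x-y|\ge 1$ contributes only $O(x_2)$ to $\|G_H(x,\cdot)\|_{L^{p'}(\D)}$, which is harmless once $p'\ge q$. The logarithmic singularity is what breaks the argument. Whenever $B(x,x_2/2)\subset\D$, domain monotonicity of Green's functions gives $G_H(x,y)\ge G_\D(x,y)\ge \frac{1}{2\pi}\ln\frac{x_2}{2|x-y|}$ on that ball, hence $\|G_H(x,\cdot)\|_{L^{p'}(\D)}^{p'}\gtrsim x_2^2$. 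So $\theta p'\le 2<q$ is forced, and you would need $\int_{D_l}x_2^2\,dx<\infty$, which does not follow from $\int_{D_l}x_2^q\,dx<\infty$. Concretely, take (a smoothing of) $\D=\{0<x_2<\min(\frac12,|x_1|^{-a})\}$ with $\frac{1}{q+1}<a\le\frac13$. All hypotheses of Case (2) hold for every $p\le q/(q-1)$, yet
\[
\int_{\D\cap\{|x_1|>N\}}\|G_H(x,\cdot)\|_{L^{p'}(\D)}^{p'}\,dx\gtrsim\int_{|x_1|>N}|x_1|^{-3a}\,dx_1=\infty \quad\text{for every } N .
\]
In fact $G_\D\notin L^{p'}(\D\times\D)$ here, so any argument that controls the tail only through $L^{p'}$-integrability of the Green's function must fail. (Separately, putting $D_u\setminus B(0,R)$ into the tail would need $\int_{D_u}x_2^{\theta p'}\,dx<\infty$, which $\mathrm{Vol}(D_u)<\infty$ does not give when $\theta>0$.)

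The ingredient you need is the H\"older bound from your Step 1, which you never use in Step 2. It has to be used as uniform tightness of the impulse rather than as a bound. Since $x_2^{p'}\le x_2^q$ on $D_l$ when $p'\ge q$, one gets $\sup_n\int_{D_l\cap\{|x_1|>N\}}x_2\omega_n\,dx\le M\|x_2\|_{L^{p'}(D_l\cap\{|x_1|>N\})}\to0$ as $N\to\infty$; this is where $p\le q/(q-1)$ enters. The paper then keeps all of $D_u$ in the finite-volume core $D_0=D_u\cup(D_l\cap\{|x_1|\le N\})$. There $G\in L^{p'}(D_0\times D_0)$, so weak convergence of $\omega_n(x)\omega_n(y)$ suffices. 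The tail self-energy and the core--tail cross term are controlled by the impulse-weighted estimates \eqref{ineq:estimate of kinetic energy} and \eqref{ineq:estimate of energy crossing term}. These are small because the tail impulse is small, whether or not $G$ is $L^{p'}$-integrable on the tail. (The cross term also uses a bound on the impulse of $\omega_n|_{D_0}$. On $D_l\cap\{|x_1|\le N\}$ this is automatic from the same H\"older bound; on all of $\D$ it holds for the minimizing sequences to which the proposition is applied.) In Case (1) your strategy is in the same spirit as the paper's, which proves $G\in L^{p'}(\D\times\D)$ directly.
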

\begin{proof}
    \underline{\textit{Case (1):}} It is sufficient to prove that $G\in L^{p^\prime}(\D \times \D)$ for any $1/p+1/p^{\prime}=1$. This follows form:
    \begin{align*}
        & \int_{\D}\int_{\D}G^{p^{\prime}}(x,y)dxdy 
        \\
        \leq & \int_{D_u}\int_{D_u}G_H^{p^{\prime}}(x,y)dxdy + 2\int_{D_l}\int_{D_u}G_H^{p^{\prime}}(x,y)dxdy + \int_{D_l}\int_{D_l}G_H^{p^{\prime}}(x,y)dxdy, \\
        \leq & \int_{D_u}\int_{D_u}G_H^{p^{\prime}}(x,y)dxdy + 3\int_{D_l}\int_{\R^2_+}G_H^{p^{\prime}}(x,y)dxdy, \\
        \leq & \int_{D_u}\int_{D_u}G_H^{p^{\prime}}(x,y)dxdy + C\int_{D_l}y_2^2dy.
    \end{align*}
    The first term is finite because $\tt{Vol}(D_u)<\infty$, and the last term is finite by $q\in[0,2]$ in the sense that 
    \begin{align*}
        \int_{D_l}y_2^2 dy \leq \int_{D_l}y_2^q dy <\infty.
    \end{align*}
    
    \underline{\textit{Case (2):}} We first demonstrate the concentration of $\{x_2\omega_n\}$ in the sense that
    \begin{align}
        \sup_{n}\int_{D_l \cap \{|x_1|>N\}} x_2 \omega_n dx\leq \epsilon. \label{ineq:concentration of x_2 omega_n}
    \end{align}
    Suppose \eqref{ineq:concentration of x_2 omega_n} fails. Then there exists $\epsilon_0>0$ and a subsequence $\{\omega_{n_k}\}$ such that for every $k \in \mathbb{Z}_{\geq 1}$,
    \begin{align*}
        \int_{D_l \cap \{|x_1|>k\}} x_2 \omega_{n_k} dx > \epsilon_0.
    \end{align*}
    Thus 
    \begin{align*}
        \epsilon_0 < \int_{D_l \cap \{|x_1|>k\}} x_2 \omega_{n_k} dx \leq  \left(\int_{D_l \cap \{|x_1|>k\}} x_2^{p^\prime} dx\right)^{1/p^\prime}\left(\int_{D_l \cap \{|x_1|>k\}} \omega_{n_k}^p dx\right)^{1/p}.
    \end{align*}
    where $p^\prime=p/(p-1) $. The assumption $p \leq q/(q-1)$ implies $p^\prime\geq q$, and the weak finite volume condition \eqref{def:weak_finite_volume_domain_assumption} on $D_l$ gives
    \begin{align*}
        \int_{D_l}x_2^{p^\prime}dx\leq \int_{D_l}x_2^{q}dx <+\infty.
    \end{align*}
    Then 
    \begin{align*}
        \int_{D_l \cap \{|x_1|>k\}} x_2^{p^\prime} dx \rightarrow 0, \tt{ as } k \rightarrow \infty,
    \end{align*}
    we conclude
    \begin{align*}
        \|\omega_{n_k}\|_{p} \geq (\int_{D_l \cap \{|x_1|>k\}} \omega_{n_k}^p dx)^{1/p} \geq \epsilon_0 \Big(\int_{D_l \cap \{|x_1|>k\}} x_2^{p^\prime} dx\Big)^{-1/p^\prime}\rightarrow +\infty.
    \end{align*}
    $\{\omega_{n_k}\}$ is uniformly bounded in $L^p$ because $\omega_{n_k}\rightharpoonup\omega$ in $L^p(D)$, a contradiction arises. 
    To complete the proof, fix $0<\epsilon<1$ and define
    $$
        D_0=D_u\cup(D_l \cap \{|x_1|\leq N\}),
    $$
    where $N=N(\epsilon^{\frac{3p-2}{2p-2}})$ is the constant obtained in the previous step. By properly increasing $N$, if needed, we can assume
    \begin{align*}
        \int_{\D\backslash D_0} x_2\omega dx\leq \epsilon^{\frac{3p-2}{2p-2}}.
    \end{align*}
    Then 
    \begin{align*}
        |E[\omega_n] & - E[\omega]| \leq |E[\omega_n|_{D_0}]-E[\omega|_{D_0}]| \\
        & + \int_{\mathcal{D}}\int_{\mathcal{D}} G(x,y)\omega_n|_{D_0}(x)\omega_n|_{\mathcal{D}\backslash D_0}(y)dydx + E[\omega_n|_{\mathcal{D}\backslash D_0}]\\
        & + \int_{\mathcal{D}}\int_{\mathcal{D}} G(x,y)\omega|_{D_0}(x)\omega|_{\mathcal{D}\backslash D_0}(y)dydx + E[\omega|_{\mathcal{D}\backslash D_0}].
    \end{align*}

    For the last four terms, using \eqref{ineq:estimate of kinetic energy}, \eqref{ineq:estimate of energy crossing term} together with $$\sup_n\|x_2\omega_n|_{\mathcal{D}\backslash D_0}\|_1, \|x_2 \omega|_{\mathcal{D}\backslash D_0}\|_1\leq \epsilon^{\frac{3p-2}{2p-2}},$$ 
    we obtain
    \begin{align*}
        |E[\omega_n] & - E[\omega]| \leq |E[\omega_n|_{D_0}]-E[\omega|_{D_0}]| + C M^2\epsilon.
    \end{align*}
    where $M$ is a large constant such that $\sup_n\|\omega_n\|_{p}, \|\omega\|_{p}\leq M$.

    For the first term, note that $\tt{Vol}(D_0)<\infty$, so $G(x,y) \in L^{p^{\prime}}(D_0 \times D_0)$ with $1/p+1/{p^\prime}=1$ and $\omega_n(x)\omega_n(y) \rightharpoonup \omega(x)\omega(y)$ in $L^p(D_0 \times D_0)$. Hence, as $n\to\infty$
    \begin{align*}
        E[\omega_n|_{D_0}] = & \int_{D_0}\int_{D_0} {G}(x,y)\omega_n(x)\omega_n(y)dydx \\
        & \longrightarrow \int_{D_0}\int_{D_0} {G}(x,y)\omega(x)\omega(y)dydx = E[\omega|_{D_0}].
    \end{align*}

    In conclusion, as $n$ is sufficiently large, 
    \begin{align*}
        |E[\omega_n] & - E[\omega]| \leq 2CM^2\epsilon.
    \end{align*}
    Since $\epsilon>0$ can be arbitrary small, the proof is complete.
    
\end{proof}

Based on Proposition \ref{prop:weak_finite_volume_energy_convergence}, we can then obtain the convergence theorem when $\O=\D$.

\hspace*{\fill} 

\noindent\underline{\textit{Proof Case \ref{case:general convergence on finite volume} in Theorem \ref{thm:general convergence theorem without L1}:}}

The proof follows the same lines as that of the first two cases of Theorem 1.4 in Section \ref{sec:general convergence theorem for half plane and strip}; the only difference is that the convergence of the kinetic energy is guaranteed by Proposition \ref{prop:weak_finite_volume_energy_convergence} rather than by the concentration‑compactness argument of Proposition \ref{prop:concentration-compactness lemma}. We therefore omit the details.

\rightline{$\Box$}

\section{Stability respect to the minimizer}\label{sec:stability}

Having established the existence of minimizers and a general convergence theorem, we now turn to their orbital stability for initial data in $C_c^\infty(\Omega)$. Using the general convergence theorem (Theorem 1.4), we show that the set of minimizers is orbitally stable. The proof here is quite similar with the Theorem 1.4 in \cite{abe2022stability}, the Theorem 1.3 in \cite{Abe2025StabilityOL}, and the Theorem 5.1 in \cite{Dong2026StabilityOV}, but to ensure completeness, we present it below.

\hspace*{\fill} 


\noindent\textit{\underline{Proof of the Theorem \ref{thm:stable wrt minimizer}: }}
Suppose \eqref{eqn:stable result} false, then there exists $\epsilon_0>0$, a sequence $\{\xi_{0,n}\}\subseteq C_c^\infty(\O), ~ \xi_{0,n} \geq 0, ~ \|\xi_{0,n}\|_1\leq 1 $ and $t_n$ in the lifespan of the solution $\xi_n$ based on the initial data $\xi_{0,n}$ such that 
\begin{align*}
    \inf_{\omega\in S_{\mu,\lambda}}\Big\{ \|\xi_{0,n}-\omega\|_{p}+\|x_2(\xi_{0,n}-\omega)\|_{1} \Big\} & \leq 1/n, \\ \inf_{\omega\in S_{\mu,\lambda}}\Big\{ \|\xi_n(t_n)-\omega\|_{p}+\|x_2(\xi_n(t_n)-\omega)\|_{1} \Big\} &> \epsilon_0.
\end{align*}
We write $\xi_n=\xi_n(t_n)$ by suppressing $t_n$. We take $\omega_n\in  S_{\mu,\lambda}$ such that $\|\xi_{0,n}-\omega\|_{p}+\|x_2(\xi_{0,n}-\omega)\|_{1}\rightarrow 0$. By \eqref{ineq:estimate of energy difference}, 
$$ | E_{p,\lambda}[\xi_{0,n}]+I_{\mu,\lambda} | = | E_{p,\lambda}[\xi_{0,n}]-E_{p,\lambda}[\omega_n] | \rightarrow 0 \text{ as } n \rightarrow \infty$$
Thus $\{\xi_{0,n}\}$ be a minimizing sequence such that $\xi_{0,n}\in K_{\mu_n}, \mu_n\rightarrow\mu$ and $-E_{p,\lambda}[\xi_{0,n}] \rightarrow I_{\mu,\lambda} $ as $n\rightarrow\infty$.

Knowing that $\xi$ conserved on particle trajectory map, then $\|\xi_n\|_{p}=\|\xi_{0,n}\|_{p}$ and $\|x_2\xi_n\|_1=\|x_2\xi_{0,n}\|_1$. Meanwhile, since the kinetic energy is conserved in time, then $E_{p,\lambda}[\xi_{0,n}]=E_{p,\lambda}[\xi_{n}]$ for any $n$. Hence $\{\xi_{n}\}$ be a minimizing sequence such that $\xi_{n}\in K_{\mu_n}, \mu_n\rightarrow\mu$ and $-E_{p,\lambda}[\xi_{n}] \rightarrow I_{\mu,\lambda} $ as $n\rightarrow\infty$. By the Theorem \ref{thm:general convergence theorem without L1}, choosing a subsequence still denoted by $\{\xi_{n}\}$, there exists $\xi\in S_{\mu,\lambda}$ such that $\omega_n\rightarrow\omega$ and $x_2\omega_n\rightarrow x_2\omega$ in $L^2(\Omega)$ and $L^1(\Omega)$ respectively. Sending $n\rightarrow\infty$, 
\begin{align*}
    0=&\inf_{\omega\in S_{\mu,\lambda}}\Big\{ \|\xi-\omega\|_{p}+\|x_2(\xi-\omega)\|_{1} \Big\},\\
    =&\inf_{\omega\in S_{\mu,\lambda}}\Big\{ \lim_{n\rightarrow\infty}\Big( \|\xi_{n}-\omega\|_{p}+\|x_2(\xi_{n}-\omega)\|_{1}\Big) \Big\},\\
    \geq&\liminf_{n\rightarrow\infty}\Big(\inf_{\omega\in S_{\mu,\lambda}}\Big\{ \|\xi_{n}-\omega\|_{p}+\|x_2(\xi_{n}-\omega)\|_{1}\Big\} \Big) 
    \geq \epsilon_0.
\end{align*}
We obtained a contradiction.

\rightline{$\Box$}

After completing the proof of Theorem \ref{thm:stable wrt minimizer}, we note that the coverage of the last case ($\Omega = D$) in that theorem is limited. To remedy this, we introduce an upper bound on the $L^1$-norm into the admissible space, thereby obtaining stability with an $L^1$-norm constraint for all $q\ge 0$ and $p>1$. Since this discussion deviates from the main line of reasoning, we place it in Appendix \ref{sec:stability for D with L1 bound}.


\section{Discussion}\label{sec:discussion}

In the end, we discuss some generalizations of Theorem \ref{thm:stable wrt minimizer}, and state some open questions. 

\subsection{More general domains and optimal conditions} 

Our treatment of weak finite volume domains exploits the decay rate $q$ of the lower part $D_l$ to enforce concentration, which leads to the constraint $p \le q/(q-1)$ in the absence of an $L^1$ bound. Under the same decay rate but with an $L^1$ bound, however, there is no upper restriction on $p$. This naturally raises the question of whether such a restriction is merely technical or genuinely necessary.  

Conversely, it would be of considerable interest to identify optimal geometric conditions that guarantee $L^p$ stability for $p>4/3$ (or even $p>1$) within the concentration‑compactness framework used by Abe, Choi, and Jeong \cite{Abe2025StabilityOL} for the half‑plane. 

\subsection{Free‑boundary problems.}

As observed in \cite{Dong2026StabilityOV}, the weak finite volume condition appears naturally in the study of free‑boundary Euler equations (see e.g. Hu, Luo, and Yao \cite{hu2024small}). The stability results obtained here concern fixed domains; a challenging next step is to understand whether an analogous variational principle can be formulated for the evolving fluid domains that occur in free‑boundary problems.

If the domain itself is part of the solution, the Biot-Savart law, which serves as the foundation of this paper, breaks down.  This leads to even greater difficulties. One possible approach is to decompose the velocity field $u$ into a rotational part $u_{\rm rot}$ and an irrotational part $u_{\rm irrot}$.
For a fixed time $t$, $u^{rot}$ can be handled in a manner similar to that developed in this paper, while $u^{irrot}$ could be treated as an additional contribution. By exploiting the boundary equation, the extra accumulated terms might then be reduced to a negligible perturbation, thereby yielding the desired result.


\subsection{Multiple patches and finer structure of minimizers.}

Our analysis guarantees the orbital stability of the set of minimizers, but it says very little about the structure of individual minimizers except for the relation $\omega^{p-1} = \lambda(\psi - W x_2)_+$. For the special case $p=2$, Abe and Choi \cite{abe2022stability} proved uniqueness (up to translation) of the Chaplygin–Lamb dipole, and Abe, Choi, and Jeong \cite{Abe2025StabilityOL} removed the mass constraint from that result. For $p \neq 2$ on the half‑plane, uniqueness remains open even under Steiner symmetry. The problem becomes even more intriguing on strips and weak finite volume domains. A detailed geometric study of the set $S_{\mu,\lambda}(p)$, for instance, reveal the existence of disconnected supports or asymmetric touching configurations. Progress in this direction might rely on the maximum principle techniques of Fraenkel \cite{fraenkel2000introduction} and the symmetry results used in \cite{abe2022stability} for the Lamb dipole, adapted to the nonlinearity $t^{1/(p-1)}_+$.

\subsection{Connections with kinetic energy maximization.}

In \cite{abe2025existence}, Abe, Choi, Jeong, Sim, and Woo construct a family of Sadovskii vortices for all $p\in(1,\infty]$ by maximizing the kinetic energy subject to constraints on impulse and $L^p$ norm. Their approach yields axis‑touching configurations share the same structure as the minimizers found here, i.e. $\omega^{p-1} = \lambda(\psi - W x_2)_+$ with $W>0$. A systematic comparison between the two variational principles
could deepen our understanding of the selection mechanisms for traveling wave profiles. In particular, a natural question is whether there exists a homotopy or a similar relation between the minimizers and maximizers of these two types of problems. An affirmative answer would help establish a rigorous equivalence between them.

\subsection*{Data availability}
No data was used for the research described in the article.

\appendix

\section{Stability for $\D$ with $L^1$ bound}\label{sec:stability for D with L1 bound}

In this section we study the stability on $\D$ under a suitable $L^1$ bound. The idea of the proof is adapt from the Section \ref{sec:general convergence theorem for D}, but after introducing the $L^1$-norm constraint, the decay property of the domain $\D$ itself plays a stronger role. 

We first establish a mixed $L^1$–$L^p$ upper bound for the kinetic energy, and then use it to obtain the $L^p$ boundedness of minimizing sequences for the new minimization problem \eqref{def:minimizing problem with L1} that will be defined below. Based on this boundedness and the decay property of $\D$, we prove a general convergence theorem, Theorem \ref{thm:general convergence for D with L1}, which is valid for all $q\ge 0$ and $p>1$. This theorem then yields the corresponding orbital stability. Furthermore, to align with the preceding structural analysis carried out in Proposition \ref{prop:general structure}, we also study the structure of the minimizer in this setting.

\begin{prop}\label{prop:estimate for kinetic energy with L1}
    The following estimates hold for $\omega,\omega_i\in L^1\cap L^p(\D)$ provided that $x_2\omega$ and $x_2\omega_i$ are both in $L^1(\D)$, with a constant $C$ independent from $\omega,\omega_i, i=1,2$:
    \begin{align}
        &E[\omega]\leq C\|\omega\|_p^{1/2}\|\omega\|_1^{1/2+1/p}\|x_2\omega\|_1^{1-1/p},  \label{ineq:estimate of kinetic energy with L1}
        \\
        &\left|\int_{\D}\int_{\D}G(x,y)\omega_1(y)\omega_2(x)dxdy\right|\leq C\|\omega_1\|_p^{1/2}\|\omega_1\|_1^{1/2}\|x_2\omega_2\|_1^{1-1/p}\|\omega_2\|_1^{1/p}, \label{ineq:estimate of energy crossing term with L1}
        \\
        &|E[\omega_1]-E[\omega_2]|\leq C \|\omega_1-\omega_2\|_{p}^{1/2}\|\omega_1-\omega_2\|_{1}^{1/2}\|x_2(\omega_1+\omega_2)\|_{1}^{1-1/p}\|\omega_1+\omega_2\|_{1}^{1/p}. \label{ineq:estimate of energy difference with L1}
    \end{align}
\end{prop}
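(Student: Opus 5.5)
The plan is to reduce everything to a pointwise bound on the stream function $\psi(x)=\int_{\D}G(x,y)\omega(y)\,dy$, using $0\le G\le G_H$ from \eqref{ineq:comparison between green's function}. The aim is
\begin{align*}
    \psi(x)\le C\|\omega\|_p^{1/2}\|\omega\|_1^{1/2}\quad\text{for all } x\in\D .
\end{align*}
Given this, the crossing-term estimate \eqref{ineq:estimate of energy crossing term with L1} comes from pairing $\psi_1$ with $\omega_2$. Instead of bounding this pairing by $\|\psi_1\|_\infty\|\omega_2\|_1$, I would interpolate to obtain the factor $\|x_2\omega_2\|_1^{1-1/p}\|\omega_2\|_1^{1/p}$. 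This requires a second pointwise bound with linear growth in $x_2$, namely $\psi(x)\le Cx_2\cdot(\cdots)$, which comes from $G_H(x,y)\le \frac{x_2y_2}{\pi|x-y|^2}$ in the far field, or from $\ln(1+s)\le s^{\theta}$.

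The intended route is to prove $\psi(x)\le C\min\{A,\,x_2^{\theta}B\}$ with suitable norms $A$ and $B$, and then take a geometric mean with exponents $1/p$ and $1-1/p$. Writing $\int\psi_1\omega_2=\int\psi_1^{1/p}\psi_1^{1-1/p}\omega_2$ and using $\psi_1\le A$ for the first factor and $\psi_1\le Cx_2B$ for the second gives $\int\psi_1\omega_2\le A^{1/p}B^{1-1/p}\int x_2^{1-1/p}\omega_2$. Hölder's inequality with exponents $p/(p-1)$ and $p$ then yields $\int x_2^{1-1/p}\omega_2\le \|x_2\omega_2\|_1^{1-1/p}\|\omega_2\|_1^{1/p}$. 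This produces exactly the right-hand structure of \eqref{ineq:estimate of energy crossing term with L1}, provided $A^{1/p}B^{1-1/p}\lesssim\|\omega_1\|_p^{1/2}\|\omega_1\|_1^{1/2}$. So the key lemma is a bound $\psi_1(x)\le C\min\{1,x_2\}\|\omega_1\|_p^{1/2}\|\omega_1\|_1^{1/2}$, or a variant in which the $x_2$-linear piece carries the same $L^1$–$L^p$ product.

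To prove the key lemma, I would split the integral defining $\psi$ into $|x-y|<r$ and $|x-y|\ge r$. On the near field, $G_H(x,y)\lesssim \ln\bigl(1+\tfrac{4x_2y_2}{|x-y|^2}\bigr)$. Using $\ln(1+s)\le s^{\theta}$ with a small $\theta$ when $x_2\lesssim1$, or the logarithmic bound directly, Hölder's inequality against $\|\omega\|_p$ gives roughly $r^{2/p'}\|\omega\|_p$ up to logarithmic factors. On the far field, $G_H\le \frac{x_2y_2}{\pi|x-y|^2}$, or for bounded $G_H$ simply $G_H\lesssim \ln(1+4x_2y_2/r^2)$, and this is controlled by $\|\omega\|_1$. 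Optimizing over $r$ balances the two pieces and yields the $1/2$–$1/2$ split.

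The main obstacle is the large-$x_2$ regime, where $y_2$ can also be large and the far-field kernel does not decay in $L^\infty$. Here I would use $\mathrm{Vol}(D_u)<\infty$ from \eqref{def:weak_finite_volume_domain_assumption}, together with the restriction to $\D$, so that the relevant near-field region has finite measure. This finiteness is where the domain geometry enters and lets the constant $C$ depend only on $\D$ and $p$.

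With the crossing bound in hand, \eqref{ineq:estimate of kinetic energy with L1} follows by taking $\omega_1=\omega_2=\omega$ and halving. The difference estimate \eqref{ineq:estimate of energy difference with L1} follows from the identity $E[\omega_1]-E[\omega_2]=\frac12\int\!\!\int G\,(\omega_1-\omega_2)(x)(\omega_1+\omega_2)(y)$, obtained by symmetry of $G$. I would then apply the crossing bound with the signed function $\omega_1-\omega_2$ in the slot carrying $\|\cdot\|_p^{1/2}\|\cdot\|_1^{1/2}$, which is legitimate since it only uses $|G\,\omega_1|\le G|\omega_1|$. The function $\omega_1+\omega_2$ goes in the other slot.
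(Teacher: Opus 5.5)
\textbf{The gap is your key lemma.} The bound $\psi(x)\le C\min\{1,x_2\}\|\omega\|_p^{1/2}\|\omega\|_1^{1/2}$ is false whenever $\partial\D$ contains a segment of $\{x_2=0\}$.

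\emph{Counterexample.} Take the half-disk $\D=\{|x|<1,\ x_2>0\}$. There $G_\D(x,y)=G_H(x,y)-G_H(x,y/|y|^2)=G_H(x,y)-O(x_2y_2)$ near the origin. Let $\omega_\epsilon=\mathbf{1}_{B(x^\epsilon,\epsilon)}$ with $x^\epsilon=(0,2\epsilon)$. For $y$ in this ball, $4x^\epsilon_2y_2/|x^\epsilon-y|^2\ge 8$, so $G_\D(x^\epsilon,y)\ge \frac{1}{4\pi}\ln 9-O(\epsilon^2)$ and $\psi(x^\epsilon)\gtrsim \epsilon^2$. Your right-hand side is of order $\epsilon\cdot\epsilon^{1/p}\cdot\epsilon=\epsilon^{2+1/p}$, which is much smaller.

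\emph{Why this is structural.} Near a flat boundary piece the problem is locally scale invariant, and the only power of $x_2$ compatible with that scaling is $x_2^{1-1/p}$, not $x_2$. A bound $\psi\le Cx_2B(\omega)$ would control $\psi/x_2\to\partial_2\psi(x_1,0)$, the boundary velocity.
\begin{itemize}
\item For $1<p<2$, no finite $B(\omega)$ works. Take $\omega=|y|^{-\beta}\mathbf{1}_\D$ with $1\le\beta<2/p$. Then $\omega\in L^1\cap L^p$, yet Fatou gives $\psi(0,x_2)/x_2\to\infty$ as $x_2\to 0$.
\item For $p>2$, suppose $A,B$ are products of powers of $\|\omega\|_1$ and $\|\omega\|_p$. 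Matching the exponents in your geometric mean $A^{1/p}B^{1-1/p}$ against shrinking indicators forces $A\sim\|\omega\|_1$. But $\sup\psi\lesssim\|\omega\|_1$ fails because of the logarithmic singularity of $G$.
\end{itemize}

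\emph{The near/far optimization has the same defect.} For a logarithmic kernel, balancing the two pieces gives $\|\omega\|_1$ times a logarithm, not $\|\omega\|_p^{1/2}\|\omega\|_1^{1/2}$.

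\textbf{What the paper does instead.} The $\tfrac12$--$\tfrac12$ split is simply the interpolation $\|\omega\|_{2p/(p+1)}\le\|\omega\|_p^{1/2}\|\omega\|_1^{1/2}$. So it comes from H\"older directly, not from an optimization.
\begin{itemize}
\item Write $G\omega=G\cdot\omega^{1/2}\cdot\omega^{1/2}$ with exponents $r=\frac{2p}{p-1}$, $2p$, $2$, and use $G\le G_H$. This gives $\psi(x)\le \|G_H(x,\cdot)\|_{L^r(\R^2_+)}\|\omega\|_p^{1/2}\|\omega\|_1^{1/2}$.
\item By translation and dilation invariance of $G_H$, $\|G_H(x,\cdot)\|_{L^r}^r=x_2^2\int G_H((0,1),z)^r\,dz$. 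This is finite precisely because $r>2$: the singularity is only logarithmic, and $G_H((0,1),z)\lesssim |z|^{-1}$ at infinity.
\item Hence $\psi(x)\le Cx_2^{1-1/p}\|\omega\|_p^{1/2}\|\omega\|_1^{1/2}$. This is exactly the power your geometric mean was meant to produce.
\end{itemize}
This bound holds on all of $\R^2_+$, so neither $\mathrm{Vol}(D_u)<\infty$ nor any other domain geometry is needed. The large-$x_2$ ``obstacle'' you identified does not arise, because the growth $x_2^{1-1/p}$ is absorbed by your step $\int x_2^{1-1/p}\omega_2\le\|x_2\omega_2\|_1^{1-1/p}\|\omega_2\|_1^{1/p}$.

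That H\"older step is correct and matches the paper. So do your deduction of \eqref{ineq:estimate of kinetic energy with L1} from the crossing bound and the polarization argument for \eqref{ineq:estimate of energy difference with L1}.
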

\noindent\textit{Proof: } Using \eqref{ineq:comparison between green's function} and Hölder’s inequality with the decomposition $1=\frac{1}{2}\cdot (1-\frac{1}{p})  + \frac{1}{2} \cdot \frac{1}{p} + \frac{1}{2}$, we have
\begin{align*}
    \left|\int_{\D}G(x,y)\omega(y)dy\right|\leq &  
    \left(\int_{\D}G(x,y)^qdy\right)^{1/2(1-1/p)}\|\omega\|_p^{1/2}\|\omega\|_1^{1/2}, \\
    \leq & \left(\int_{\R^2_+}G_H(x,y)^qdy\right)^{1/2(1-1/p)}\|\omega\|_p^{1/2}\|\omega\|_1^{1/2}, \\
    \leq & Cx_2^{1-1/p}\|\omega\|_p^{1/2}\|\omega\|_1^{1/2},
\end{align*}
where $q$ is suitably chosen. Now,
\begin{align*}
    \left|\int_{\D}\int_{\D}G(x,y)\omega_1(x)\omega_2(y)dxdy\right|
    \leq & C\|\omega_1\|_p^{1/2}\|\omega_1\|_1^{1/2}\|x_2^{1-1/p}\omega_2\|_1,\\
    \leq & C\|\omega_1\|_p^{1/2}\|\omega_1\|_1^{1/2}\|x_2\omega_2\|_1^{1-1/p}\|\omega_2\|_1^{1/p},
\end{align*}
which is \eqref{ineq:estimate of energy crossing term with L1}.
The estimate \eqref{ineq:estimate of kinetic energy with L1} follows immediately by taking $\omega_1=\omega_2=\omega$. As for \eqref{ineq:estimate of energy difference with L1}, set $\widetilde\omega=\omega_1-\omega_2, \widehat\omega=\omega_1+\omega_2$; then
\begin{align*}
    2(E[\omega_1]-E[\omega_2]) 
    = & \int_{\D}\int_{\D}G(x,y)\widetilde\omega(x)\widehat\omega(y)dxdy,
\end{align*}
and \eqref{ineq:estimate of energy difference with L1} follows from \eqref{ineq:estimate of energy crossing term with L1}.

\rightline{$\Box$}

Later, for parameters $0\le\mu<\infty$ and $\nu\in\R_+$ we introduce the admissible set
\begin{align*}
    \widetilde{K}_{\mu,\nu}=\left\{\omega\in L^p(\D) \big| \omega\geq0, \int_{\D}x_2\omega(x)dx=\mu, \|\omega\|_1\leq \nu \right\},
\end{align*}
together with the new minimization problem
\begin{align}
    \widetilde{I}_{\mu, \nu, \lambda}(p)=\inf_{\omega \in \widetilde{K}_{\mu,\nu}}\{-E_{p,\lambda}\}. \label{def:minimizing problem with L1}
\end{align}
As usual, $\widetilde{S}_{\mu, \nu, \lambda}(p) \subseteq \widetilde{K}_{\mu,\nu}$ denotes the set of minimizers of \eqref{def:minimizing problem with L1}. Although the non‑homogeneity of $E[\omega]$ and $\|\omega\|_p^p$ makes a spatial scaling argument failed, the parameter $\nu$ is not important during the discussion. We therefore abbreviate the notation as 
\begin{align*}
    \widetilde{K}_{\mu}=\widetilde{K}_{\mu,\nu}(p), && 
    \widetilde{I}_{\mu,\lambda}=\widetilde{I}_{\mu, \nu,\lambda}(p), && 
    \widetilde{S}_{\mu, \nu, \lambda} = \widetilde{S}_{\mu, \lambda}(p).
\end{align*}

Using above notations, we have

\begin{prop}\label{prop:propoties of I_mu with L1} 
    \begin{align}
        & \widetilde{I}_{0,\lambda}=0, && \text{ for any $\lambda\in \R$},   \label{eqn:I_0 with L1}
        \\
        & \widetilde{I}_{\mu,\lambda}>-\infty, && \text{ for any } \mu\geq 0 \text{ and } \lambda>0 ,\label{eqn:I_mu is finite with L1}\\
        & \widetilde{I}_{\mu,\lambda}<0, && \text{ for any } (\mu,\lambda)\in \bigcup_{K\subseteq \Omega \text{ and } |K|<\infty}
         (0, \mu_K ] \times (\lambda_{K,\mu},\infty ). \label{ineq:negativity of I_mu with L1}
    \end{align}
    where
    \begin{align*}
        \mu_K=\frac{\nu}{\tt{Vol(K)}^{1/p}} \int_K x_2 dx, && \lambda_{K,\mu}=\frac{2\mu^{p-2}\tt{Vol(K)}}{p \int_K\int_K G(x,y) dydx}\left(\int_K x_2\right)^{2-p}
    \end{align*}
\end{prop}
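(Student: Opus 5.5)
The plan mirrors the proof of Proposition \ref{prop:propoties of I_mu}, with \eqref{ineq:estimate of kinetic energy with L1} replacing \eqref{ineq:estimate of kinetic energy} and the constraint $\|\omega\|_1\le\nu$ checked on the test function.

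\emph{Step 1: the case $\mu=0$.} Property \eqref{eqn:I_0 with L1} is immediate. If $\omega\ge0$ and $\int_{\D}x_2\omega\,dx=0$, then $\omega=0$ a.e., since $x_2>0$ in $\D$. Hence $\widetilde K_{0,\nu}=\{0\}$ and $\widetilde I_{0,\lambda}=0$.

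\emph{Step 2: finiteness.} For \eqref{eqn:I_mu is finite with L1}, take any $\omega\in\widetilde K_{\mu}$. By \eqref{ineq:estimate of kinetic energy with L1},
\begin{align*}
E[\omega]\le C\nu^{\frac12+\frac1p}\mu^{1-\frac1p}\|\omega\|_p^{1/2}.
\end{align*}
Young's inequality with exponents $2p$ and $\frac{2p}{2p-1}$ gives
\begin{align*}
C\nu^{\frac12+\frac1p}\mu^{1-\frac1p}\|\omega\|_p^{1/2}\le \frac{1}{p\lambda}\|\omega\|_p^p + C(p)\,\lambda^{\frac{1}{2p-1}}\Big(\nu^{\frac12+\frac1p}\mu^{1-\frac1p}\Big)^{\frac{2p}{2p-1}}.
\end{align*}
Therefore $E_{p,\lambda}[\omega]$ is bounded above uniformly on $\widetilde K_\mu$. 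Since $1/2<p$, this works for every $p>1$, with no restriction $p>4/3$. This yields $\widetilde I_{\mu,\lambda}>-\infty$.

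\emph{Step 3: negativity.} For \eqref{ineq:negativity of I_mu with L1}, fix $K\subseteq\D$ with $\mathrm{Vol}(K)<\infty$, $0<\mu\le\mu_K$ and $\lambda>\lambda_{K,\mu}$. Use the same test function as before,
\begin{align*}
\omega_0=c_0\mathbf 1_K,\qquad c_0=\mu\Big(\int_K x_2\,dx\Big)^{-1}.
\end{align*}
Then $\int x_2\omega_0=\mu$, and the identical computation leading to \eqref{eqn:lowerbound of -I_mu} shows $E_{p,\lambda}[\omega_0]>0$ exactly when $\lambda>\lambda_{K,\mu}$.

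The only new point is admissibility, namely $\|\omega_0\|_1=c_0\mathrm{Vol}(K)\le\nu$. I would verify this from $\mu\le\mu_K$. The natural requirement is $\mu\le \nu\int_K x_2\,dx/\mathrm{Vol}(K)$, whereas the stated $\mu_K$ carries $\mathrm{Vol}(K)^{1/p}$ in the denominator.

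I expect this step to be the main obstacle, namely reconciling the stated form of $\mu_K$ with the condition actually needed. My plan is to shrink $K$ (or take $K$ with $\mathrm{Vol}(K)\le1$) so that the stated condition implies the needed one. Failing that, I would note that one can equally replace $\mu_K$ by $\nu\int_Kx_2/\mathrm{Vol}(K)$, and that the union over all admissible $K$ is unaffected for the purposes of later use.

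Once $\omega_0\in\widetilde K_\mu$, we get $\widetilde I_{\mu,\lambda}\le -E_{p,\lambda}[\omega_0]<0$.
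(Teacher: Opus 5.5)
Your Steps 1 and 2 follow the paper and are correct. In Step 2 you do slightly better than the paper. The paper leaves $\|\omega\|_1$ inside its lower bound, with the exponent misprinted as $\frac{p+1}{2p-1}$ instead of $\frac{p+2}{2p-1}$; bounding $\|\omega\|_1\le\nu$ gives a bound uniform on $\widetilde K_\mu$. Step 3 uses the same test function $\omega_0=c_0\mathbf 1_K$ as the paper, and you have correctly located the one real issue. The admissibility condition $\|\omega_0\|_1=\mu\,\mathrm{Vol}(K)/\int_K x_2\,dx\le\nu$ holds iff $\mu\le\nu\int_Kx_2\,dx/\mathrm{Vol}(K)$. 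The factor $\mathrm{Vol}(K)^{1/p}$ in the stated $\mu_K$ is instead the normalization that would enforce $\|\omega_0\|_p\le\nu$. The paper's proof does not confront this: it asserts $\omega_0\in\widetilde K_\mu$ ``by the choice of $c_0$'', although $c_0$ only fixes the first moment.

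The gap is in your primary remedy of shrinking $K$. Membership of $(\mu,\lambda)$ in the union is witnessed by one particular $K$. Passing to a smaller $K'$ changes both $\mu_{K'}$ and $\lambda_{K',\mu}$, and nothing guarantees that $(\mu,\lambda)$ still qualifies. In fact no argument can establish \eqref{ineq:negativity of I_mu with L1} as stated, because it is false. Take $\D=(0,A)\times(0,1)$ (bounded, hence admissible) with $A>2^{p/(p-1)}$, and take $K=\D$. Then $\mu_K=\frac{\nu}{2}A^{1-1/p}>\nu$. But $0<x_2<1$ on $\D$, so every $\omega\in\widetilde K_{\mu,\nu}$ satisfies $\mu=\int x_2\omega\le\|\omega\|_1\le\nu$. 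Hence for $\mu\in(\nu,\mu_K]$ and $\lambda>\lambda_{K,\mu}$ (finite, since $\int_K\int_K G>0$), the admissible class is empty and $\widetilde I_{\mu,\lambda}=+\infty$.

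Your fallback is therefore the correct course, but present it as a correction of the statement. Either define $\mu_K=\nu\int_Kx_2\,dx/\mathrm{Vol}(K)$, or keep the stated $\mu_K$ and restrict the union to $\mathrm{Vol}(K)\le1$, where $\mu\le\mu_K$ gives $\|\omega_0\|_1\le\nu\,\mathrm{Vol}(K)^{1-1/p}\le\nu$. With either fix, the rest of your Step 3 goes through verbatim. Your remark that the union is ``unaffected'' is not right: the parameter set genuinely changes (the pairs above drop out). Theorem \ref{thm:general convergence for D with L1} should then be read with the corrected set.
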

\noindent\textit{Proof: } Property \eqref{eqn:I_0 with L1} is trivial since $K_0=\{0\}$. By \eqref{ineq:estimate of kinetic energy with L1} and Young's inequality, for any $\omega\in \widetilde{K}_{\mu}$,
\begin{align*}
    E_{p,\lambda}[\omega]&\leq C\|\omega\|_p^{1/2}\|\omega\|_1^{1/2+1/p}\|x_2\omega\|_1^{1-1/p}-\frac{1}{p\lambda}\|\omega\|_p^p 
    \leq C \lambda^{\frac{1}{2p-1}}\mu^{\frac{2p-2}{2p-1}}\|\omega\|_{1}^{\frac{p+1}{2p-1}},
\end{align*}
where we choose $\eps=(C p\lambda)^{-\frac{1}{2p}}$. Hence
\begin{align*}
    \widetilde{I}_{\mu,\lambda}=\inf_{\omega\in \widetilde{K}_{\mu}}\big\{-E_{p,\lambda}[\omega ]\big\}\geq -C \lambda^{\frac{1}{2p-1}}\mu^{\frac{2p-2}{2p-1}}\|\omega\|_{1}^{\frac{p+1}{2p-1}}>-\infty.
\end{align*}

For any $(\mu,\lambda)\in (0,\mu_K ] \times (\lambda_{K,\mu},\infty)$ with $K \subseteq \Omega \text{ and } |K|<\infty$, define
\begin{align*}
    \omega_0=c_0 \mathbf{1}_{K}, \text{ where } c_0=\mu\left(\int_{K}x_2dx\right)^{-1}.
\end{align*}
Then $\omega_0\in \widetilde{K}_{\mu} $ by the choice of $c_0$, and 
\begin{align*}
    E_{p,\lambda}[\omega_0]&= \frac{1}{2}c_0^2\int_{K}\int_{K}G(x,y)dxdy-\frac{1}{p\lambda}c_0^p \tt{Vol}(K)\\
    &=\frac{1}{2}c_0^2\left(\int_{K}\int_{K}G(x,y)dxdy-\frac{2c_0^{p-2}}{p\lambda}\tt{Vol}(K)\right)\\
    &>0,
\end{align*}
where the last inequality follows thanks to the choice of $\lambda$.

\rightline{$\Box$}

\begin{remark}[Minimizing Sequence is $L^p$ Bounded]\label{remark:uniformly bounded with L1}
\end{remark}
    Any minimizing sequence $\{\omega_n\}$ satisfying $\omega_n\in \widetilde{K}_{\mu}, \mu_n\rightarrow \mu$, and $-E_{p,\lambda}[\omega_n]\rightarrow \widetilde{I}_{\mu,\lambda}$ with $\mu, \lambda$ follows the assumption in \eqref{ineq:negativity of I_mu with L1} is uniformly bounded in $L^p$. Indeed, by \eqref{ineq:estimate of kinetic energy with L1} and Young’s inequality, for any $\epsilon >0 $ and $\omega\in K_\mu$,
    \begin{align*}
        \frac{1}{p\lambda}\|\omega\|_{p}^p+E_{p,\lambda}[\omega]
        = & E[\omega] \leq C\left(\eps^{2p}\|\omega\|_p^p + \eps^{-\frac{2p}{2p-1}}\|\omega\|_1^{\frac{p+2}{2p-1}}\|x_2\omega\|_1^{\frac{2p-2}{2p-1}}\right)
    \end{align*}
    Choosing $\eps=(2Cp\lambda)^{-\frac{1}{2p}}$ gives, 
    \begin{align*}
        \|\omega\|_{p}^p\leq C\lambda^{\frac{2p}{2p-1}}\|\omega\|_1^{\frac{p+2}{2p-1}}\|x_2\omega\|_1^{\frac{2p-2}{2p-1}}-2p\lambda E_{p,\lambda}[\omega].
    \end{align*}
    Since $\widetilde{I}_{\mu,\lambda}<0$, the minimizing sequence satisfies 
    \begin{align*}
        \limsup_{n\rightarrow +\infty}\|\omega_n\|_{p}\leq C\lambda^{\frac{2}{2p-1}}\mu^{\frac{2p-2}{2p^2-p}}\limsup_{n\rightarrow +\infty}\|\omega_n\|_1^{\frac{p+2}{2p^2-p}}. 
    \end{align*}
    In particular, if $\omega$ is a minimizer, then 
    \begin{align}
        \|\omega\|_{p}\leq C\lambda^{\frac{2}{2p-1}}\mu^{\frac{2p-2}{2p^2-p}}\|\omega\|_1^{\frac{p+2}{2p^2-p}}. \label{ineq:uniformly bounded with L1}
    \end{align}


\begin{prop}[Convergence of the kinetic energy]
\label{prop:weak_finite_volume_energy_convergence_with_L^1}
Let $\O=\D$. Assume \(\{\omega_n\} \subset L^1  \cap L^p(\D)\) and \(\omega \in L^p(\D)\) satisfy:
\begin{itemize}
    \item \(\sup_n \|\omega_n\|_1 + \sup_n \|\omega_n\|_p\le M\) for some constants \(M\), and
    \item \(\omega_n \rightharpoonup \omega\) weakly in \(L^p(\mathcal{D})\).
\end{itemize}
If each $\omega_n$ is nonnegative, then for any $\epsilon>0$, there exists $N=N(\epsilon)>0$, such that 
\begin{align}
    \sup_{n}\int_{D_l \cap \{|x_1|>N\}} x_2 \omega_n dx\leq \epsilon, \label{ineq:concentration of bdd seq with L1}
\end{align}
hence 
\(E[\omega_n] \to E[\omega]\).
\end{prop}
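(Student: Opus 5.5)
The plan follows the proof of Case (2) in Proposition \ref{prop:weak_finite_volume_energy_convergence}, with the restriction $p\le q/(q-1)$ replaced by the uniform $L^1$ bound. There are three steps: establish the tightness estimate \eqref{ineq:concentration of bdd seq with L1}, extend it to the limit $\omega$, and then split the energy into a near part and a far part.

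\textbf{Step 1: tightness in $D_l$.} I split $D_l\cap\{|x_1|>N\}$ into a low region $\{x_2<\eta\}$ and a high region $\{\eta\le x_2<1\}$. In the low region I use the $L^1$ bound directly:
\begin{align*}
\int_{D_l\cap\{|x_1|>N,\,x_2<\eta\}}x_2\omega_n\,dx\le \eta\|\omega_n\|_1\le \eta M .
\end{align*}
In the high region, Hölder's inequality gives
\begin{align*}
\int_{D_l\cap\{|x_1|>N,\,x_2\ge\eta\}}x_2\omega_n\,dx\le \|\omega_n\|_p\,\tt{Vol}\bigl(D_l\cap\{|x_1|>N,\,x_2\ge \eta\}\bigr)^{1/p'} .
\end{align*}
On $\{x_2\ge\eta\}$ we have $1\le \eta^{-q}x_2^q$, so the weak finite volume condition \eqref{def:weak_finite_volume_domain_assumption} gives $\tt{Vol}(D_l\cap\{x_2\ge\eta\})\le \eta^{-q}\int_{D_l}x_2^q\,dx<\infty$. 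By dominated convergence, this volume restricted to $\{|x_1|>N\}$ tends to $0$ as $N\to\infty$.

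Given $\eps$, I first choose $\eta=\eps/(2M)$ and then choose $N$ large. This yields \eqref{ineq:concentration of bdd seq with L1} uniformly in $n$ and for every $q\ge0$, $p>1$. This splitting is the key new point: the $L^1$ bound controls the thin part near the axis, which is exactly where the argument without $L^1$ needed $p'\ge q$.

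\textbf{Step 2: properties of the limit.} Weak lower semicontinuity gives $\|\omega\|_p\le M$ and $\omega\ge0$. Testing against $\mathbf 1_{B}$ for bounded sets $B$ and using monotone convergence gives $\|\omega\|_1\le M$. Both bounds pass to $\omega$, so the same estimate as in Step 1 holds for $\omega$ after enlarging $N$ if needed.

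\textbf{Step 3: energy convergence.} I set $D_0=D_u\cup(D_l\cap\{|x_1|\le N\})$, which has finite volume. I write $E[\omega_n]-E[\omega]$ as the $D_0$–$D_0$ part plus the cross terms and self-terms involving the restrictions to $\D\setminus D_0$.

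For the remainder terms I use \eqref{ineq:estimate of energy crossing term with L1}, placing the small factor $\|x_2\omega_n|_{\D\setminus D_0}\|_1^{1-1/p}$ on the $\D\setminus D_0$ piece and bounding the other norms by $M$. The self-term $E[\omega_n|_{\D\setminus D_0}]$ is handled by the same crossing estimate with $\omega_1=\omega_2$. Each remainder is therefore $\lesssim M^2\eps^{1-1/p}$ uniformly in $n$.

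One caveat: Step 1 only makes $x_2\omega_n$ small on $D_l\cap\{|x_1|>N\}$. The set $\D\setminus D_0$ equals exactly this piece, since $D_u\subseteq D_0$, so no issue arises there.

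For the main term, $\tt{Vol}(D_0)<\infty$ together with $G\le G_H$ gives $G\in L^{p'}(D_0\times D_0)$, exactly as in Case (1) of Proposition \ref{prop:weak_finite_volume_energy_convergence}. Since $\omega_n(x)\omega_n(y)\rightharpoonup\omega(x)\omega(y)$ weakly in $L^p(D_0\times D_0)$, the $D_0$–$D_0$ part converges. Letting $n\to\infty$ and then $\eps\to0$ finishes the proof.

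\textbf{Expected obstacle.} The main difficulty is Step 1, specifically the $\eta$-splitting that removes the need for $p'\ge q$. A secondary technical point is checking that the constant in \eqref{ineq:estimate of energy crossing term with L1} is genuinely uniform when applied to the restricted functions.
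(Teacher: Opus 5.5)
Your proof is correct. Steps 2--3 are what the paper means when it says the energy convergence ``follows the same lines'' as Case (2) of Proposition \ref{prop:weak_finite_volume_energy_convergence}. The genuine difference is your Step 1.

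The paper argues as follows. It first enlarges $q$, which is harmless since $x_2<1$ on $D_l$, so that $q'=q/(q-1)\le p$. It then argues by contradiction: if \eqref{ineq:concentration of bdd seq with L1} failed along a subsequence, H\"older with exponents $(q,q')$ would give $\|\omega_{n_k}\|_{q'}\ge\eps_0\bigl(\int_{D_l\cap\{|x_1|>k\}}x_2^q\,dx\bigr)^{-1/q}\to\infty$. This contradicts the interpolation bound $\|\omega_{n_k}\|_{q'}\le\|\omega_{n_k}\|_1^{\theta}\|\omega_{n_k}\|_p^{1-\theta}\le M$.

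You instead cut at height $\eta$. The $L^1$ bound absorbs the thin layer $\{x_2<\eta\}$. By Chebyshev, $D_l\cap\{x_2\ge\eta\}$ has finite measure, so H\"older with the $L^p$ bound and dominated convergence in $N$ close the estimate.

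Your version is direct and quantitative: $N$ depends only on $\eps$, $M$, $p$ and $\D$, and no interpolation or subsequence is needed. It also uses only $\mathrm{Vol}(D_l\cap\{x_2\ge\eta\})<\infty$ for every $\eta>0$, which is strictly weaker than $\int_{D_l}x_2^q\,dx<\infty$ for some $q$. The paper's version has the advantage of running parallel to its no-$L^1$ argument, which shows that the $L^1$ bound is exactly what replaces the restriction $p\le q/(q-1)$. Your observation that $\D\setminus D_0\subseteq D_l$ is also correct: it gives $\|x_2\omega_n\mathbf 1_{\D\setminus D_0}\|_1\le\|\omega_n\|_1$, so no moment hypothesis is needed for the tail terms.

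One step is under-justified in both your write-up and the paper's. If $D_u$ is unbounded, $\mathrm{Vol}(D_0)<\infty$ and $G\le G_H$ alone do not give $G\in L^{p'}(D_0\times D_0)$. Small balls $B_k\subseteq D_u$ of volume $2^{-k}$ at heights $h_k$ growing fast enough make $\iint_{B_k\times B_k}G_H^{p'}\gtrsim 4^{-k}(\ln h_k)^{p'}$ non-summable.

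In the application to $\widetilde{K}_{\mu_n}$ this is easily repaired by also cutting at a height $H\ge 1$. The bound $\|\omega_n\mathbf 1_{\{x_2>H\}}\|_1\le\mu_n/H$ makes that piece small via \eqref{ineq:estimate of energy crossing term with L1}. On $D_0\cap\{x_2\le H\}$ one has $G_H\le\frac{1}{4\pi}\ln(1+4H^2/|x-y|^2)$, which lies in $L^{p'}(A\times A)$ for any set $A$ of finite measure.
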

\begin{proof}
Observe that, if $\D$ meet the weak finite condition with parameter $q_0<q$, then 
\begin{align*}
    \int_{D_l} x_2^q dx \leq \int_{D_l} x_2^{q_0} dy <\infty,
\end{align*}
so that $\D$ meet the weak finite condition with parameter $q$. Hence, with such a subcase relation, it is sufficient to assume $q \geq p/(p-1)$. 

Suppose \eqref{ineq:concentration of bdd seq with L1} is fails. Then there exists $\epsilon_0>0$ and a subsequence $\{\omega_{n_k}\}$ such that for any $k \in \mathbb{Z}_{\geq 1}$,
\begin{align*}
    \int_{D_l \cap \{|x_1|>k\}} x_2 \omega_{n_k} dx > \epsilon_0.
\end{align*}
Thus 
\begin{align*}
    \epsilon_0 < \int_{D_l \cap \{|x_1|>k\}} x_2 \omega_{n_k} dx \leq  \left(\int_{D_l \cap \{|x_1|>k\}} x_2^{q} dx\right)^{1/{q}}\left(\int_{D_l \cap \{|x_1|>k\}} \omega_{n_k}^{q'} dx\right)^{1/{q'}}.
\end{align*}
where $\frac{1}{q}+\frac{1}{q'}=1$ with $q'= q/(q-1) \leq p $. By the weak finite volume condition \eqref{def:weak_finite_volume_domain_assumption} for $D_l$,
\begin{align*}
    \int_{D_l \cap \{|x_1|>k\}} x_2^q dx \rightarrow 0, \tt{ as } k \rightarrow \infty.
\end{align*}
Then 
\begin{align*}
    \|\omega_{n_k}\|_{q'} \geq (\int_{D_l \cap \{|x_1|>k\}} \omega_{n_k}^{q'} dx)^{1/q'} \geq \epsilon_0 \Big(\int_{D_l \cap \{|x_1|>k\}} x_2^q dx\Big)^{-1/q}\rightarrow +\infty.
\end{align*}

However, this leads to a contradiction: our assumption \(\|\omega_{n_k}\|_1 + \|\omega_{n_k}\|_p \le M\) implies \(\|\omega_{n_k}\|_{q'} \le 2M\) for every \(q' \in (1,2]\). The proof of \(E[\omega_n] \to E[\omega]\) follows the same lines as the second case in Proposition \ref{prop:weak_finite_volume_energy_convergence}, and using inequalities \eqref{ineq:estimate of energy crossing term with L1} and \eqref{ineq:estimate of kinetic energy with L1} to control the four tailing terms; we omit the details.

\end{proof}

Proposition  \ref{prop:weak_finite_volume_energy_convergence_with_L^1} plays essentially the same role as Proposition \ref{prop:weak_finite_volume_energy_convergence}. With it, we therefore conclude the following general convergence theorem.

\begin{thm}\label{thm:general convergence for D with L1}
    Let $p>1$, $\nu>0$, and $(\mu,\lambda)$ satisfy \eqref{ineq:negativity of I_mu with L1}. For any minimizing sequence $\{\omega_n\}$ satisfying $\omega_n\in \widetilde{K}_{\mu_n}$, $\mu_n \rightarrow \mu $ and $-E_{p,\lambda}[\omega_n] \rightarrow \widetilde{I}_{\mu,\lambda}$, then there exists a subsequence, still denoted by $\{\omega_n\}$, such that there exists $\omega\in \widetilde{K}_{\mu}$, 
    $\omega_n\rightarrow\omega$ and $x_2\omega_n\rightarrow x_2\omega$ strongly in $L^p(\mathcal{D})$ and $L^1(\mathcal{D})$ respectively. In particular $\omega\in \widetilde{S}_{\mu,\lambda}$ and hence $\widetilde{S}_{\mu,\lambda}\neq \emptyset$.
\end{thm}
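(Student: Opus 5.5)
The plan is to follow the direct-method argument used for Case \ref{case:general convergence on finite volume} of Theorem \ref{thm:general convergence theorem without L1}. Proposition \ref{prop:weak_finite_volume_energy_convergence_with_L^1} replaces Proposition \ref{prop:weak_finite_volume_energy_convergence}, and no concentration-compactness or subadditivity is needed.

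\textbf{Step 1: extract a weak limit.} Let $\{\omega_n\}$ be a minimizing sequence with $\omega_n\in\widetilde K_{\mu_n}$ and $\mu_n\to\mu$. By construction $\|\omega_n\|_1\le\nu$. Remark \ref{remark:uniformly bounded with L1} then gives $\sup_n\|\omega_n\|_p\le M$ with $M=M(\mu,\nu,\lambda)$; this uses $\widetilde I_{\mu,\lambda}<0$ from \eqref{ineq:negativity of I_mu with L1}, together with $\mu_n\to\mu$, so that eventually $-E_{p,\lambda}[\omega_n]<0$. Pass to a subsequence with $\omega_n\rightharpoonup\omega$ in $L^p(\D)$. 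The limit satisfies $\omega\ge0$, and by weak lower semicontinuity (testing against bounded sets and using monotone convergence) $\|\omega\|_1\le\nu$ and $\|x_2\omega\|_1\le\mu$.

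\textbf{Step 2: show $\|x_2\omega\|_1=\mu$, i.e.\ no loss of first moment.} Split $\D=D_u\cup (D_l\cap\{|x_1|\le N\})\cup(D_l\cap\{|x_1|>N\})$.
\begin{itemize}
\item On $D_l\cap\{|x_1|>N\}$, the tightness estimate \eqref{ineq:concentration of bdd seq with L1} makes the contribution of $x_2\omega_n$ at most $\epsilon$, uniformly in $n$.
\item On the bounded set $D_l\cap\{|x_1|\le N\}$, the weight $x_2\mathbf 1$ lies in $L^{p'}$, so weak convergence passes $\int x_2\omega_n$ to the limit.
\item On $D_u$ there is a difficulty: $\mathrm{Vol}(D_u)<\infty$, but $D_u$ may be unbounded in $x_2$, so $x_2\mathbf 1_{D_u}$ need not be in $L^{p'}$.
\end{itemize}
To handle $D_u$, I would first show that mass cannot escape to large $x_2$. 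By \eqref{ineq:estimate of energy crossing term with L1}, the energy contributed by $\{x_2>R\}$ is bounded by the $\|x_2\cdot\|_1$ part there. I would therefore argue via a dichotomy: if the part of $x_2\omega_n$ in $\{x_2>R\}$ carries mass $\ge\epsilon_0$, that part contributes vanishing energy, because $\mathrm{Vol}(D_u\cap\{x_2>R\})\to0$ and Hölder gives $\|\omega_n\mathbf 1_{x_2>R}\|_1\to0$. The $\|\omega\|_1^{1/2}$ and $\|\omega\|_1^{1/p}$ factors in \eqref{ineq:estimate of energy crossing term with L1} then kill that energy.

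A cleaner alternative is to compare with the sign of $\widetilde I$ via the scaling-in-amplitude continuity argument of Proposition \ref{prop:continuous of I_mu}. Discarding that mass gives a competitor at a lower impulse $\mu'<\mu$. Rescaling the amplitude $\omega\mapsto(\mu/\mu')\omega$ strictly lowers $-E_{p,\lambda}$ only when the energy dominates, and this fails in general.

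I expect this step, excluding loss of impulse, to be the main obstacle. The first thing I would try is to add a term to the minimization using the constraint $\int x_2\omega=\mu$: show that the weak limit $\omega$ minimizes over $\{\int x_2\omega\le\mu,\ \|\omega\|_1\le\nu\}$, then use the Euler--Lagrange argument of Step 2 in Proposition \ref{prop:strip exists of minimizer}. If $\mu_0<\mu$, the multiplier $W$ vanishes, so $\omega^{p-1}=\lambda\psi$ on the support, with an extra $L^1$ multiplier. I would then derive a contradiction from this equation, possibly by perturbing along directions $h$ with $\int h=0$ and $\int x_2 h>0$.

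\textbf{Step 3: upgrade to strong convergence.} Once $\omega\in\widetilde K_\mu$, Proposition \ref{prop:weak_finite_volume_energy_convergence_with_L^1} gives $E[\omega_n]\to E[\omega]$. The chain of inequalities in \eqref{eqn:kinetic convergence for half plane and strip} then yields that $\omega$ is a minimizer and $\|\omega_n\|_p\to\|\omega\|_p$. Uniform convexity of $L^p$ turns this into strong $L^p$ convergence. Finally, $x_2\omega_n\to x_2\omega$ in $L^1$ follows from strong $L^p$ convergence on bounded pieces and the uniform tail control obtained in Step 2, exactly as at the end of the proof of Case \ref{case:general convergence on half-plane} of Theorem \ref{thm:general convergence theorem without L1}.
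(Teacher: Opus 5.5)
Your Steps 1 and 3 follow the paper's route exactly. The paper's proof of this theorem reruns Case 3.2 of Section \ref{sec:general convergence theorem for half plane and strip}: the bound of Remark \ref{remark:uniformly bounded with L1}, a weak limit, $E[\omega_n]\to E[\omega]$ from Proposition \ref{prop:weak_finite_volume_energy_convergence_with_L^1}, the chain \eqref{eqn:kinetic convergence for half plane and strip}, and uniform convexity.

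The genuine gap is your Step 2. You leave it open, and none of the routes you sketch closes it.
\begin{itemize}
\item The vanishing energy of the escaping piece is already contained in Proposition \ref{prop:weak_finite_volume_energy_convergence_with_L^1} plus weak lower semicontinuity. It yields only $\widetilde I_{\mu_0,\lambda}\le -E_{p,\lambda}[\omega]\le \widetilde I_{\mu,\lambda}$, with $\mu_0=\|x_2\omega\|_1$. A contradiction would need $\widetilde I_{\mu_0,\lambda}>\widetilde I_{\mu,\lambda}$ for $\mu_0<\mu$, i.e.\ strict decrease in $\mu$. Nothing on $\D$ gives that: there is no scaling, and, as you note, amplitude rescaling does not help.
\item The Euler--Lagrange route needs the same monotonicity to know that the weak limit minimizes the relaxed problem. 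It then needs to rule out $W=0$. The paper proves $W>0$ only on $\R^2_+$ and $\S$ (Proposition \ref{prop:positive W}), via Pohozaev and $x_1$-translation identities that have no analogue on $\D$. With the $L^1$ multiplier, the equation $\omega^{p-1}=\lambda(\psi-\gamma)_+$, $\gamma\ge0$, has genuine solutions (e.g.\ classical steady vortices on bounded domains). So the equation alone gives no contradiction.
\end{itemize}

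The paper does not fill this step either. The only tightness it proves is \eqref{ineq:concentration of bdd seq with L1}, horizontally on $D_l$. The set $D_u$ is lumped into the finite-volume set $D_0$, where weak $L^p$ convergence controls $\int x_2\omega_n$ only if $x_2\mathbf 1_{D_u}\in L^{p'}$.

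Your worry is therefore well founded and not merely technical. Suppose $D_u$ contains a horn $\{x_2\ge2,\ |x_1|<x_2^{-1-a}\}$ with $0<a<p'$; this has finite volume but $\int_{D_u}x_2^{p'}dx=\infty$. Let $\eta_h$ be $\delta h^{a-1}$ times the indicator of the horn intersected with $\{h<x_2<2h\}$. Then:
\begin{itemize}
\item its impulse is comparable to $\delta$;
\item $\|\eta_h\|_1\lesssim\delta/h$;
\item $\|\eta_h\|_p\lesssim\delta h^{a/p'-1}\to0$.
\end{itemize}
By \eqref{ineq:estimate of kinetic energy with L1}--\eqref{ineq:estimate of energy crossing term with L1}, impulse can thus be added at vanishing cost. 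Hence $\mu\mapsto\widetilde I_{\mu,\lambda}$ is non-increasing. Wherever it is constant on some $[\mu',\mu]$, there are minimizing sequences that push a fixed amount $\mu-\mu'$ of impulse to $x_2=\infty$, and for them no subsequence of $x_2\omega_n$ converges in $L^1$.

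What is missing is vertical tightness: $\sup_n\int_{D_u\cap\{x_2>R\}}x_2\omega_n\,dx\to0$ as $R\to\infty$. Under an extra hypothesis such as $D_u$ bounded or $\int_{D_u}x_2^{p'}dx<\infty$, this is one line of H\"older:
\[
\sup_n\int_{D_u\cap\{x_2>R\}}x_2\omega_n\,dx\le M\Big(\int_{D_u\cap\{x_2>R\}}x_2^{p'}dx\Big)^{1/p'}\to0 .
\]
Combine this with \eqref{ineq:concentration of bdd seq with L1} and with weak convergence on the remaining finite-volume, bounded-height set. That gives $\|x_2\omega\|_1=\mu$ and the $L^1$ convergence in your Step 3. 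Without such a hypothesis you would have to prove strict monotonicity of $\widetilde I_{\cdot,\lambda}$, which may fail.
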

\begin{proof}
    The argument is the same as that for the last case of Theorem \ref{thm:general convergence theorem without L1}, using Proposition \ref{prop:weak_finite_volume_energy_convergence_with_L^1} instead of Proposition \ref{prop:weak_finite_volume_energy_convergence}; we therefore omit it. 

\end{proof}

\hspace*{\fill} 

\noindent\textit{\underline{Proof of the Theorem \ref{thm:stable wrt minimizer with L1 bound}: }}

The proof is identical to that of Theorem \ref{thm:stable wrt minimizer}, except that it relies on the general convergence theorem in Theorem \ref{thm:general convergence for D with L1} rather than Theorem \ref{thm:general convergence theorem without L1}.  We omit the repetition.
    
\rightline{$\Box$}

To conclude this appendix and the paper, we give the structural analysis of a minimizer $\omega \in \widetilde{S}_{\mu,\lambda}$.

\begin{prop}[General structure of minimizers]
\label{prop:general structure with L1}
Let $p>1$, $\nu>0$, and $(\mu,\lambda)$ satisfy \eqref{ineq:negativity of I_mu with L1}. Each minimizer $\omega \in \widetilde{S}_{\mu,\lambda}$ satisfies
\begin{equation}
\label{eqn:general structure of minimizer with L1}
\omega^{p-1} = \lambda(\psi -W x_{2} - \gamma)_+, \qquad 
\psi (x) = \int_{\D}G(x,y)\omega (y)\mathrm{d}y,
\end{equation}
for some constants $W, \gamma \in \R$, uniquely determined by $\omega$.
\end{prop}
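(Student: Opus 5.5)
The proof will adapt the variational argument of Proposition \ref{prop:general structure}, now with two Lagrange multipliers: $W$ for the impulse constraint $\int x_2\omega=\mu$ and $\gamma$ for the mass constraint $\|\omega\|_1\le\nu$. Fix $\omega\in\widetilde{S}_{\mu,\lambda}$. Since $\widetilde{I}_{\mu,\lambda}<0$ by \eqref{ineq:negativity of I_mu with L1}, $\omega\not\equiv0$, so there is $\delta_0>0$ with $|\{\omega\ge\delta_0\}|>0$. On this set I will choose two compactly supported $L^\infty$ functions $h_1,h_2$ supported in $\{\omega\ge\delta_0\}$ with
\begin{align*}
\int h_1=0,\quad \int x_2h_1=1,\qquad \int h_2=1,\quad \int x_2h_2=0.
\end{align*}
This is possible as soon as $x_2$ is not a.e.\ constant on $\{\omega\ge\delta_0\}$; since $\{x_2=c\}$ is a null set, one simply picks two disjoint small pieces at different heights.

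First consider the case $\|\omega\|_1<\nu$. For $\delta\in(0,\delta_0)$ and compactly supported $h\in L^\infty$ with $h\ge0$ on $\{0\le\omega\le\delta\}$, set $\eta=h-(\int x_2h)h_1$. Then $\int x_2\eta=0$ and $\omega+\eps\eta\ge0$ for small $\eps>0$. Moreover $\|\omega+\eps\eta\|_1\le\nu$ for small $\eps$, because $\|\omega\|_1<\nu$ is strict and $\eta\in L^1$. Repeating the first-variation computation of Proposition \ref{prop:general structure} gives $\omega^{p-1}=\lambda(\psi-Wx_2)_+$, which is the claim with $\gamma=0$.

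Now consider the case $\|\omega\|_1=\nu$. Here I restrict to perturbations $\eta=h-(\int x_2h)h_1-(\int h)h_2$, which preserve both $\int x_2\eta=0$ and $\int\eta=0$. The positivity argument on $\{\omega>\delta\}$ and $\{0\le\omega\le\delta\}$ is unchanged. The first variation then yields
\begin{align*}
0\ge\int\Big(\psi-Wx_2-\gamma-\tfrac1\lambda\omega^{p-1}\Big)h,
\end{align*}
where
\begin{align*}
W=\int\Big(\psi-\tfrac1\lambda\omega^{p-1}\Big)h_1,\qquad \gamma=\int\Big(\psi-\tfrac1\lambda\omega^{p-1}\Big)h_2.
\end{align*}
Letting $\delta\to0$ as in \eqref{eqn:Psi and omega with para delta without L1}--\eqref{eqn:Psi and omega passing delta to 0 without L1}, with $\Psi=\psi-Wx_2-\gamma$, gives $\Psi=\lambda^{-1}\omega^{p-1}$ on $\{\omega>0\}$ and $\Psi\le0$ on $\{\omega=0\}$. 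Hence \eqref{eqn:general structure of minimizer with L1} holds. One could additionally try to show $\gamma\ge0$ by considering one-sided perturbations that decrease the mass, but this sign is not required by the statement.

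For uniqueness, suppose $\omega^{p-1}=\lambda(\psi-W_*x_2-\gamma_*)_+$. Then $\Psi_*=\psi-W_*x_2-\gamma_*$ satisfies $\Psi_*-\lambda^{-1}\omega^{p-1}=0$ on $\{\omega\ge\delta_0\}\supset\spt h_i$. Testing this identity against $h_1$ and against $h_2$, and using the normalizations of $h_1,h_2$, recovers $W_*=W$ and $\gamma_*=\gamma$.

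The main obstacle is the uniqueness claim in the non-saturated case $\|\omega\|_1<\nu$. There $\gamma=0$ comes out of the variation, but the test with $h_2$ still determines $\gamma_*$ from the equation alone, so it is consistent and forces $\gamma_*=0$. I therefore expect no real problem, provided the construction of $h_1,h_2$ is justified carefully. Finiteness of all the integrals involved is ensured by $h_i\in L^\infty$ with compact support, together with $\psi\in L^\infty$ from \eqref{ineq:sup estimate of stream fct}.
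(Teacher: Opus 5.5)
Your proposal is correct and follows essentially the paper's argument. It uses the same two-multiplier first variation with $h_1,h_2$ normalized against $\int h$ and $\int x_2 h$ (your labels are swapped relative to the paper's), the same $\delta\to0$ limit, and the same uniqueness test against $\pm h_1,\pm h_2$. The paper does not split into cases: the mass-preserving perturbations from your second case are admissible whether or not $\|\omega\|_1=\nu$, so that argument alone suffices, and your case split only adds the extra information $\gamma=0$ when the mass constraint is slack.
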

\noindent\textit{Proof: }
Take an arbitrary minimizer $\omega \in \widetilde{S}_{\mu,\lambda}$ . Since $\widetilde{I}_{\mu,\lambda}< 0$ by \eqref{ineq:negativity of I_mu with L1}, $\omega\not\equiv 0$ . There exists a $\delta_{0} > 0$ such that $Vol(\{x\in \R^2_+\mid \omega \geq \delta_{0}\}) > 0$. Fix compactly supported $h_{i}\in L^{\infty}(\Omega)$ such that $\spt h_{i}\subset \{\omega \geq \delta_{0}\}, i = 1,2,$ and
\begin{align*}
    \int_{\D}h_{1}(x)\mathrm{d}x = 1, && \int_{\D}x_{2}h_{1}(x)\mathrm{d}x = 0, && 
    \int_{\D}h_{2}(x)\mathrm{d}x = 0, && \int_{\D}x_{2}h_{2}(x)\mathrm{d}x = 1.
\end{align*}

For any $\delta \in (0, \delta_{0})$ and compactly supported $h \in L^{\infty}(\D)$ such that $h \geq 0$ on $\{0 \leq \omega \leq \delta \}$, define
\begin{align*}
    \eta = h - \left(\int_{\Omega}h\mathrm{d}x\right)h_{1} - \left(\int_{\Omega}x_{2}h\mathrm{d}x\right)h_{2},
\end{align*}
so that $\int \eta dx = \int x_{2}\eta dx = 0$. For sufficiently small $\eps>0$, $\omega+\epsilon\eta\geq \delta-\epsilon\|\eta\|_{\infty}\geq 0$ on $\{\omega > \delta \}$. Since $\eta = h\geq 0$ on $\{0\leq \omega\leq \delta\}$, we also have $\omega+\epsilon\eta\geq 0 $ on $\{0\leq \omega\leq \delta\}$. Hence $\omega+\epsilon\eta\in \widetilde{K}_\mu$ for small $\eps>0$. Since $\omega$ is a minimizer of \eqref{def:minimizing problem with L1}, then
\begin{align*}
    0\geq \frac{d}{d\eps}\Bigg|_{\eps=0}E_{p,\lambda}[\omega+\eps\eta] 
    = \int_{\Omega}\left(\psi-Wx_2-\gamma-\frac{1}{\lambda}\omega^{p-1}\right)h = \int_{\omega>\delta}+\int_{0< \omega \leq \delta}\left( \Psi-\frac{1}{\lambda}\omega^{p-1} \right)h,
\end{align*}
where we use $\Psi=\psi-W x_2 -\gamma$ and
\begin{align*}
    \gamma=& \int_{\Omega}\left(\psi-\frac{1}{\lambda}\omega^{p-1}\right)h_1, && W=\int_{\Omega}\left(\psi-\frac{1}{\lambda}\omega^{p-1}\right)h_2.
\end{align*}

In the same way as in the proof of Proposition \ref{prop:general structure}, this implies 
\begin{align}
    \left\{\begin{aligned}
    \Psi - \frac{1}{\lambda}\omega^{p-1} & = 0 \quad \text{on } \{\omega > 0\}, \\
    \Psi & \leq 0 \quad \text{on } \{ \omega =0\}.
    \end{aligned}\right.
    \label{eqn:Psi and omega passing delta to 0 with L1}
\end{align}

Thus $\omega^{p-1} = \lambda(\psi -W x_{2} - \gamma)_+$. The uniqueness of $W,\gamma$ follows by taking $h=\pm h_1,\pm h_2$ as in the proof of Proposition \ref{prop:general structure}. 

\rightline{$\Box$}

\bibliographystyle{plain}
\bibliography{sample}

\end{document}